\newtheorem{lemma}{Lemma}[section]
\newtheorem{proposition}{Proposition}[section]
\newtheorem{theorem}{Theorem}[section]
\newtheorem{corollary}{Corollary}[section]
\newtheorem{definition}{Definition}[section]
\newtheorem{example}{Example}[section]
\newtheorem{remark}{Remark}[section]
\newtheorem{assumption}{Assumption}[section]
\def\section{\@startsection{section}{1}%
\z@{1\linespacing\@plus\linespacing}{1\linespacing}%
{\bf\centering}}
\def\subsection{\@startsection{subsection}{0}%
\z@{\linespacing\@plus\linespacing}{\linespacing}%
{\bf}}
\DeclareMathOperator{\loc}{loc}
\DeclareMathOperator{\tr}{Tr}
\newcommand{\cO}{\mathcal{O}}
\newcommand{\cG}{\mathcal{G}}
\newcommand{\cK}{\mathcal{K}}
\newcommand{\cB}{\mathcal{B}}
\newcommand{\cV}{\mathcal{V}}
\newcommand{\cM}{\mathcal{M}}
\newcommand{\R}{\mathbb{R}}
\newcommand{\Z}{\mathbb{Z_+}}
\newcommand{\1}{\mathbf{1}}
\newcommand{\pr}{\mathbf{P}}
\newcommand{\qpr}{\mathbb{Q}}
\newcommand{\ex}{\mathbf{E}}
\newcommand{\N}{\mathbb{N}}
\begin{document}
\title[Integrated density of states on the Sierpi\'nski Gasket]
{Integrated density of states for Poisson-Schr\"odinger perturbations of subordinate Brownian motions on the Sierpi\'nski Gasket}
\author{Kamil Kaleta and Katarzyna Pietruska-Pa{\l}uba}

\address{K. Kaleta \\ Institute of Mathematics, University of Warsaw, ul. Banacha 2, 02-097 Warszawa and Institute of Mathematics and Computer Sciences, Wroc{\l}aw University of Technology, Wyb. Wyspia\'nskiego 27, 50-370 Wroc{\l}aw, Poland}
\email{kkaleta@mimuw.edu.pl, kamil.kaleta@pwr.wroc.pl}

\address{K. Pietruska-Pa{\l}uba \\ Institute of Mathematics \\ University of Warsaw
\\ ul. Banacha 2, 02-097 Warszawa, Poland}
\email{kpp@mimuw.edu.pl}

\begin{abstract}
{We prove the existence of the integrated density of states for subordinate
Brownian motions in presence of the Poissonian random potentials on the Sierpi\'nski gasket. }

\bigskip
\noindent
\emph{Key-words}: {Subordinate Brownian motion, Sierpi\'nski gasket, {reflected process}, random Feynman-Kac semigroup, Schr\"odinger operator, random potential, Kato class, eigenvalues, integrated density of states}

\bigskip
\noindent
2010 {\it MS Classification}: {Primary 60J75, 60H25, 60J35; Secondary 47D08, 28A80}
\end{abstract}

\footnotetext{
Research supported by the National Science Center (Poland) internship
grant on the basis of the decision No. DEC-2012/04/S/ST1/00093 and by the Foundation for Polish Science.}

\maketitle

\baselineskip 0.5 cm

\bigskip\bigskip
\section{Introduction}

The integrated density of states is one of the most important object in large-scale quantum mechanics.
In random physical models with unbounded state-space it is usually difficult to describe possible energy levels of the system (i.e. the
eigenvalues of the Hamiltonian). Such a situation arises e.g. when
the Hamiltonian is a random Schr\"{o}dinger operator
\[H=-\Delta+V,\]
where $\Delta$ is the usual Laplacian in $\mathbb R^d$  and $V=V(x,\omega)\geq 0$  is a
sufficiently regular random field. The spectrum of such an operator is typically  not discrete and therefore hard to investigate,
but some of its properties are captured by the properties of the integrated density of states (IDS) of the system (see \cite[Chapter VI]{bib:Car-Lac}).

To define this object, one considers the operator $H$ constrained to a smooth bounded region $\Omega$ (a box, for example), with either Dirichlet or Neumann boundary conditions on $\partial\Omega.$ This operator, $H^\Omega,$ gives rise to a Hilbert-Schmidt semigroup of operators, $P_t^\Omega={\rm e}^{-tH^\Omega},$ and the spectrum of $H^\Omega$ is discrete. Its eigenvalues can be ordered:
\[0\leq\lambda_1^\Omega\leq \lambda_2^\Omega\leq...\to\infty\]
One then builds random empirical measures based on these spectra and normalizes them by dividing by the volume of $\Omega$:
\[\ell^\Omega(\cdot)\stackrel{def}{=}\frac{1}{|\Omega|}\sum_{k=1}^\infty \delta_{\lambda_k^\Omega}(\cdot).\]
If these measures have a vague limit when $\Omega\nearrow\mathbb R^d,$  then this limit is called the integrated density of states for the system.

When $V$ exhibits some additional ergodic properties, then the limit $\ell$ is a nonrandom Radon measure on $\mathbb R_+:=[0,\infty).$ Its properties near zero are of special interest --
in many cases, one sees the so-called Lifschitz singularity:
the quantity $\ell[0,\lambda)$ decays faster, when $\lambda\to 0^+,$ than its
counterpart with no potential: the decay rate is roughly $\exp{[-(const/\lambda^\kappa)]}$, with a positive parameter $\kappa.$  This behaviour was first discovered by Lifschitz \cite{bib:Lif} on physical grounds and proven rigorously in \cite{bib:Nakao,bib:Pastur}.
 We also refer to
\cite{bib:Szn1} for an alternate proof of the Lifschitz singularity in presence of the killing obstacles. This situation can be understood as a limiting case of the interaction with  potential.

Nonrandom IDS arise  e.g. in the particular case of Poisson random fields
\[V(x,\omega)=\int_{\mathbb{R}^d}W(x-y)\,\mu^\omega({\rm d}y),\]
where $\mu^\omega$ is the counting measure {on $\mathbb R^d$} of a realization of a Poisson point process {over $(\Omega, \cM, \qpr)$} and $W$ is a sufficiently regular profile function. In this case, one can just write down the formula for the Laplace transform of the IDS:
\begin{align} \label{eq:Lt}
L(t)=\frac{1}{(2\pi t)^{d/2}}\mathbf E^t_{0,0}\mathbb{E_Q}{\rm e}^{-\int_0^t V(B_s, \omega)\,{\rm d}s},
\end{align}
where $B_s$ is the Brownian motion on $\mathbb R^d$, {and $\mathbf E^t_{x,x}$ and $\mathbb {E_Q}$ are expectations with respect to the corresponding Brownian bridge measure and the probability measure $\qpr$, respectively. The formula \eqref{eq:Lt} is a direct consequence of the Feynman-Kac formula, stationarity of the potential $V$ and the translation invariance of the Brownian motion. }

A remarkable feature of the limit is that it is the same for both Dirichlet and Neumann boundary conditions. For spaces other than the Euclidean space, this is not always the case:
for example, in the hyperbolic space, these two limits are distinct \cite{bib:Szn-hyp1,bib:Szn-hyp2}.

For more information on {IDS} in the {classical} (i.e. the Brownian motion) case we refer e.g. to  the book \cite{bib:Car-Lac}.

\smallskip

Similar existence result and similar representation formula {for the Laplace transform of IDS} can be also derived for generalized Schr\"odinger operators of the form
\[H=-\mathcal L +V,\]
where $\mathcal L$ is the generator of a symmetric jump L\'{e}vy process and $V$ is a sufficiently regular Poissonian potential. This was done in  \cite{bib:Okura}
by extending the method of \cite{bib:Nakao}. {A very important example to this class of operators are fractional Schr\"{o}dinger operators $(-\Delta)^{\alpha/2} + V$, $\alpha \in (0,2)$, and relativistic Schr\"{o}dinger operators $(-\Delta+\mu^{2/\alpha})^{\alpha/2} - \mu + V$, $\alpha \in (0,2)$, $\mu >0$, which correspond respectively to the jump rotation invariant $\alpha$-stable and relativistic $\alpha$-stable processes perturbed by the random potential $V$ in $\mathbb R^d$. }

{The theory for generalized Schr\"odinger operators underwent rapid development, stimulated by problems of relativistic quantum physics, at the end of the 20th century. There is a wide literature concerning the spectral and analytic properties of nonlocal Schr\"odinger operators (see, e.g., \cite{bib:CMS, bib:Z, bib:KL} and references therein). Most of it has been strongly influenced by Lieb's investigations on the stability of (relativistic) matter \cite{bib:LS}. }

{Random perturbations of stochastic processes} in irregular spaces, such as fractals, have been considered as well. The Laplacian should be replaced there by the generator of the Brownian motion: by the Brownian motion one understands a Feller diffusion which remains invariant under local symmetries of the state-space. Such a process has been constructed on nested fractals or on the Sierpi\'{n}ski carpet
\cite{bib:BBa,bib:BP, bib:Kum, bib:Lin} and proven to be unique \cite{bib:BBaKT,bib:Sab}.
The existence of IDS on the Sierpi\'{n}ski gasket with killing Poissonian obstacles, and its Lifschitz singularity have been established in \cite{bib:KPP-PTRF}. The proof of existence from this paper {directly extends} to other nested fractals. {Later, the existence and the Lifschitz singularity of Brownian IDS for Poissonian obstacles and Poissonian potentials with profiles of finite range on nested fractals was also proven in \cite{bib:Sh}. The argument in that paper is based on the locality and scaling properties of the corresponding Dirichlet forms and cannot be adapted to the nonlocal case (i.e. for jump Markov processes) and Poissonian potentials with profiles of infinite range.}

\smallskip

Subordinate Brownian motion on fractals can be defined as well, by means of the classical subordination method \cite{bib:SSV}. However, it is usually difficult to establish properties of such processes: lack of translation invariance and lack of continuous scaling for the Brownian motion on fractals come as a {main difficulty here}. Properties of the subordinate $\alpha-$stable processes on $d-$sets, including nested fractals, have been investigated in \cite{bib:BSS} (cf. \cite{bib:CKu}). The boundary Harnack principle for functions harmonic with respect to such processes in natural cells of Sierpi\'nski gasket and carpet was studied in \cite{bib:BSS, bib:St1} and for arbitrary open subsets of Sierpi\'nski gasket in \cite{bib:KKw}. Very recently, in \cite{bib:BKK}, it was established for more general Markov processes on measure metric spaces, including some subordinate processes on simple nested fractals and Sierpi\'nski carpets. Basic spectral properties for subordinate processes on measure metric spaces, including a wide range of fractals, were studied in \cite{bib:CS}.

\smallskip

In the present paper, we prove the existence of the integrated density of states for the subordinate Brownian motions {in presence of the random Poissonian potential} on the Sierpi\'{n}ski gasket. Again, lack of {the homogeneity of the state space and lack of} the traslation invariance for the Brownian motion
(and, consequently, for the subordinate Brownian motions) form {a major}
obstacle here.

To establish the existence of the IDS, we investigate the Laplace transforms of empirical measures $\ell^\Omega(\cdot)$ arising from the problem for {Feynman-Kac semigroups of both the killed and the 'reflected' processes} in 'big boxes' and then we follow the scenario, previously used e.g. in \cite{bib:Szn-hyp1, bib:KPP-PTRF}
for the Brownian motion with killing obstacles:
\begin{enumerate}
\item[(1)] to prove that the averages of the Laplace transforms, with respect to the Possonian measure, do converge,
\item[(2)] to prove that their variances converge fast enough to permit a Borel-Cantelli lemma argument to get the desired convergence.
\end{enumerate}

While for the Brownian motion {killed by the Poissonian obstacles} part (1) was easy and proof of part (2) was longer, now this is part (1) which gets harder {and its proof} is the major step in obtaining the existence of IDS {(recall that also in the case of L\'evy processes in the Poissonian potentials in Euclidean spaces, the convergence described in (1) is a direct consequence of the homogeneity of the space and the translation invariance of the process). We have to take into account specific geometric properties of the Sierpi\'{n}ski gasket, and, therefore, we propose a new regularity condition on the (two argument{,  possibly with infinite range}) profile functions $W$, under which the Poissonian potential $V$ has the desired stationarity property. It seems to be natural for the gasket. An essential feature of our method is that in fact we prove the convergence as in (1) for a 'periodization' of the Poissonian potential $V$ instead of its initial shape.} Even in the Brownian case the potentials with profiles of infinite range on the Sierpi\'nski gasket were not studied so far.

Along the way, we also get that the limit is the same for both the Dirichlet and the `Neumann' {approach}. We use the term `Neumann' in analogy to the Brownian motion case: the process we are using in  this case is a counterpart of the `reflected Brownian motion' on the gasket from \cite{bib:KPP-PTRF}, and in the Euclidean case, the reflected Brownian motion has the Neumann Laplacian as its generator.

In the forthcoming paper \cite{bib:KaPP} we study the Lifschitz singularity of IDS for a class of subordinate Brownian motions subject to Poisson interaction on the Sierpi\'nski gasket.
Our proofs, both in the present and in the forthcoming paper,
hinge on the construction of the `reflected' subordinate Brownian
motions, which in turn rely on the exact labeling of the vertices
on the gasket. It would be  interesting to establish the existence and other properties of the IDS for such a problem in fractals more general than the Sierpi\'{n}ski gasket, in
particular on those fractals on which such labeling does not work.

The paper is organized as follows.
In Section \ref{sec:basic} we collect essentials on the constructions Sierpi\'{n}ski gasket, properties of Brownian motion and subordinate processes on the gasket. We also construct the
`reflected subordinate process' and prove its basic properties. Then we recall basic facts concerning Feynman-Kac semigroups, with both deterministic and random potentials. In Section
\ref{sec:conv} we prove the main result of this article -- the existence of the IDS for
the subordinate Brownian motion on the Sierpi\'{n}ski gasket influenced by a Poissonian
potential (Theorem \ref{th:main-fractal}). Along the way we establish that the IDS for the
Dirichlet and the Neumann {approach} coincide. In Section \ref{sec:ex} we conclude the paper with examples
of admissible profile functions, with both finite and infinite range.

\section{Basic definitions and preliminary results}\label{sec:basic}

\subsection{Sierpi\'nski Gasket}\label{sec:gasket}
The infinite Sierpi\'nski Gasket we will be working with is
defined as a blowup of the unit gasket, which in turn is defined
as the fixed point of the iterated function system
in $\mathbb R^2,$ consisting of three maps:
\[
\phantom{}\ \ \ \ \ \ \phi_1(x)=\frac{x}{2},\ \ \ \ \   \phi_2(x)=
\frac{x}{2}+\left(\frac{1}{2},0\right) \ \ \ \ \ \
\phi_3(x)=\frac{x}{2}+\left(\frac{1}{2}, \frac{\sqrt 3}{2}\right). \hfill \]
The unit gasket, $\mathcal G_0,$ is the unique compact subset of
$\mathbb{R}^2$ such that
\[\mathcal G_0= \phi_1(\mathcal G_0)\cup \phi_2(\mathcal G_0)\cup\phi_3(\mathcal G_0).\]
Denote by $\mathcal V^{(0)}=\{a_1,a_2,a_3\}=\{(0,0), (1,0),
(\frac{1}{2},\frac{\sqrt 3}{2})\}$ the set of its vertices. Then
we set:
\[\mathcal G_n=2^n \mathcal G_0 =((\phi_1^{-1}))^n(\mathcal G_0),\]
\[\mathcal G=\bigcup_{n=1}^\infty \mathcal G_n,\]
and inductively, for $n=1,2,...$:
\[\mathcal V^{(n+1)}= \mathcal V^{(n)}\cup \{2^na_2+\mathcal V^{(n)}\} \cup\{2^na_3 +\mathcal V^{(n)}\}, \]
\[\mathcal V_0=\bigcup_{n=0}^\infty \mathcal V^{(n)},\;\;\; \mathcal V_{M+1}= 2^M \mathcal V_0.\]
Elements of $\mathcal V_M$ are exactly the vertices of all triangles of
size $2^M$ that build up the infinite gasket.

We equip the gasket with the shortest path distance
$d(\cdot,\cdot)$: for $x,y\in \mathcal V_0,$ $d(x,y)$ is the infimum of
Euclidean lengths of all paths, joining $x$ and $y$ in the gasket.
For general $x,y\in \cG,$  $d(x,y)$ is obtained by a limit
procedure. This metric is equivalent to the usual Euclidean metric
inherited from the plane,
\[|x-y|\leq  d(x,y)\leq 2|x-y|.\]
    Observe that
$\mathcal G_M= B(0, 2^M),$ where the ball is taken in either the
Euclidean or the shortest path metric.

By $m$ we denote the Hausdorff measure on $\mathcal G$ in
dimension $d_f=\frac{\log 3}{\log 2}.$ It is normalized to have
$m(\mathcal G_0)=1.$ The number $d_f,$ being the Hausdorff dimension of the gasket $\mathcal G,$   is  sometimes  called its fractal
dimension. Another characteristic number of
$\mathcal G,$ namely $d_w=\frac{\log 5}{\log 2}$ is called the
walk dimension of $\mathcal G.$ The spectral dimension of
$\mathcal G$ is $d_s=\frac{2d_f}{d_w}.$

We will need the following estimate.

\begin{lemma}\label{lem:kolnierzyk}
Let $M\in\mathbb Z$ and $0<r<2^M.$
Then there exists a constant $c\geq 1$  ($c=1$ if $r$ is binary) such that
\begin{equation}\label{eq:kolnierzyk}
\frac{1}{c}2^Mr^{d_f-1}\leq m \left(B(0,2^M) \setminus B(0,2^M-r)\right) \leq c2^Mr^{d_f-1},
\end{equation}
where the balls are  taken in  the geodesic metric.
\end{lemma}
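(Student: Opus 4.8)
The plan is to reduce everything to the unit gasket by the binary scaling of $\mathcal G$, and then to extract a renewal-type identity for the measure of the collar near the two ``far'' vertices, from which \eqref{eq:kolnierzyk} follows by summing a geometric series.

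\emph{Step 1: reduction to $\mathcal G_0$.} Since $2^M\mathcal G=\mathcal G$ for every $M\in\mathbb Z$, the dilation $x\mapsto 2^{-M}x$ is a bijection of $\mathcal G$ with $d(2^{-M}x,2^{-M}y)=2^{-M}d(x,y)$ and $m(2^{-M}A)=2^{-Md_f}m(A)$ (as $m$ is the $d_f$-dimensional Hausdorff measure). It maps $B(0,2^M)=\mathcal G_M$ onto $\mathcal G_0=B(0,1)$ and $B(0,2^M-r)$ onto $B(0,1-s)$ with $s:=2^{-M}r\in(0,1)$. Hence, writing $F(s):=m\big(\mathcal G_0\setminus B(0,1-s)\big)$ and using $2^{d_f}=3$,
\[
m\big(B(0,2^M)\setminus B(0,2^M-r)\big)=2^{Md_f}F(s)=2^{M}r^{d_f-1}\cdot\frac{F(s)}{s^{d_f-1}},
\]
so \eqref{eq:kolnierzyk} is equivalent to $F(s)\asymp s^{d_f-1}$ on $(0,1)$, with the comparison constant equal to $1$ precisely when $s$ is a negative power of $2$ (i.e.\ ``$r$ binary'').

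\emph{Step 2: the renewal identity.} Decompose $\mathcal G_0=C_1\cup C_2\cup C_3$, $C_i=\phi_i(\mathcal G_0)$, the cells meeting pairwise only in the midpoints $m_{12},m_{13},m_{23}$, with $0=a_1\in C_1$, $d(0,m_{12})=d(0,m_{13})=\tfrac12$ and $d(0,m_{23})\ge 1$. The key geometric claim is that for $i\in\{2,3\}$ a shortest path from $0$ into $C_i$ enters through the junction point nearest $0$, so that
\[
d\big(0,\phi_i(x)\big)=\tfrac12+\tfrac12\,d(0,x)\quad(x\in\mathcal G_0,\ i=2,3),\qquad d\big(0,\phi_1(x)\big)=\tfrac12 d(0,x).
\]
Indeed, by the self-similarity of the shortest-path metric on cells, $d(0,\phi_i(x))$ equals the minimum of $\tfrac12+\tfrac12 d(0,x)$ (route through $m_{12}$, resp.\ $m_{13}$) and $d(0,m_{23})+\tfrac12 d(a_j,x)$ (route through $m_{23}$), and since $d(0,\cdot)\le 1=\diam_d\mathcal G_0$ the first term never exceeds the second. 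Consequently, for $0<s\le\tfrac12$ the collar $\mathcal G_0\setminus B(0,1-s)$ is disjoint from $C_1$ and meets each $C_i$ ($i=2,3$) exactly in $\phi_i\big(\mathcal G_0\setminus B(0,1-2s)\big)$; as $m\circ\phi_i=\tfrac13 m$ and the two pieces overlap only at $m_{23}$,
\[
F(s)=\tfrac23\,F(2s),\qquad 0<s\le\tfrac12 .
\]

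\emph{Step 3: iteration and conclusion.} Iterating gives $F(s)=(2/3)^{n}F(2^{n}s)$ as long as $2^{n}s\le1$. Choosing $n\ge0$ with $2^{n}s\in(\tfrac12,1]$ and using that $F$ is nondecreasing with $F(1)=m(\mathcal G_0\setminus\{0\})=1$ and $F(1/2)=\tfrac23$, we get $F(2^{n}s)\asymp1$, hence
\[
F(s)\asymp(2/3)^{n}=(2^{-n})^{d_f-1}\asymp s^{d_f-1},
\]
with constant $1$ when $s=2^{-n}$ and $\le 2^{d_f-1}=3/2$ in general; substituting into Step 1 yields \eqref{eq:kolnierzyk}. (Whether the balls are open or closed is irrelevant, since the geodesic spheres about $0$ are $m$-null by an analogous self-similar argument.) The main obstacle is Step 2: the clean scaling identity $d(0,\phi_i(x))=\tfrac12+\tfrac12 d(0,x)$ for the ``far'' daughter cells relies on the self-similar description of the shortest-path metric on the cells of $\mathcal G$ and on the elementary distance computations to the junction points; once it is in place, the rest is bookkeeping with the series $(2/3)^{n}$ and the relation $2^{d_f}=3$.
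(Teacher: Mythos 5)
Your proof is correct. It rests on the same geometric fact as the paper's argument --- namely that the collar $B(0,2^M)\setminus B(0,2^M-r)$ for binary $r$ is exactly the union of the scaled copies of the gasket lying along the edge opposite $0$ --- but you package it differently. The paper simply counts, for $r=2^k$, the $2^{M-k}$ triangles of size $2^k$ forming the collar (each of measure $r^{d_f}$) and then sandwiches a general $r$ between consecutive binary radii; you instead derive the one-step renewal identity $F(s)=\tfrac23\,F(2s)$ from the exact distance relation $d(0,\phi_i(x))=\tfrac12+\tfrac12\,d(0,x)$ for the two far cells ($i=2,3$) and iterate, handling non-binary $r$ through the monotonicity of $F$. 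The two interpolation steps are equivalent, and your distance formula is precisely the justification that the paper leaves implicit behind its triangle count (it identifies $\{x\in\mathcal G_0: d(0,x)\ge 1-s\}$ with the union of the outermost cells); so your version is somewhat more detailed, at the price of explicitly invoking the self-similarity and cut-point structure of the geodesic metric on cells, which you state but do not prove --- an omission on the same level as the paper's own unproved counting assertion. The explicit constants you obtain ($F(s)/s^{d_f-1}\in[2/3,3/2]$, with value $1$ at binary $s$) are consistent with the statement.
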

\begin{proof}
When $r$ is binary, i.e.  $r=2^k,$ $k\in \mathbb{Z},$
then the set $B(0,2^M) \setminus B(0,2^M-r)$ consists of $2^M/r$ triangles of size $r=2^k$
each, and so $m\left(B(0,2^M) \setminus B(0,2^M-r)\right)= \frac{2^M}{r}\cdot r^{d_f}=2^Mr^{d_f-1}.$

When $r$ is not binary, then let $r_0$ be the biggest binary number not exceeding $r,$
i.e. $r_0=2^k$, with $k$ determined uniquely by $2^k \leq r<2^{k+1}.$
As $r_0\leq r< 2 r_0$ and
\[B(0,2^M) \setminus B(0,2^M-r_0) \subseteq B(0,2^M) \setminus B(0,2^M-r) \subseteq B(0,2^M) \setminus B(0,(2^M-2r_0)_+) ,\]
the statement follows from the above.
\end{proof}

In the sequel, we will use a projection from $\mathcal G$ onto
$\mathcal G_M,$ $M=0,1,2,...$ To define it properly, we first put
labels on the set $\mathcal V_0$ (see \cite{bib:KPP-PTRF}).

Observe that  {$\mathcal
V_0\subset(\mathbb{Z}_+)a_2+(\mathbb{Z}_+)a_3$ (recall that  $a_2=(1,0)$
and $a_3=\left(\frac{1}{2},\frac{\sqrt 3}{2}\right))$.} Next,
consider the commutative three-element group $\mathbb{A}_3$ of even
permutations of 3 elements,  $\{a,b,c\},$ i.e. $\mathbb{A}_3=\{id,
 {(a\mapsto b\mapsto c), (a\mapsto c\mapsto b)} \},$ and we denote  {$p_1=(a\mapsto b\mapsto c),$ $p_2=(a\mapsto c\mapsto b)$.}
The mapping
\[\mathcal V_0\ni x=na_2+ma_3\mapsto p_1^n\circ p_2^m\in\mathbb{A}_3\]
is well defined, and for $x\in \mathcal V_0$ we put $l(x)= (p_1^n\circ
p_2^m)(a).$  {The vertices of any triangle of size 1 are of the
form $\{na_2+ma_3, (n+1)a_2+ma_3, na_2+(m+1)a_3\} $ with certain $m,n$, and so its labels are $p_1^n\circ p_2^m(a)=:l_0, $
$p_1^{n+1}\circ p^m(a)=p_1(l_0),$ $p_1^n\circ p_2^{m+1}(a)=p_2(l_0).$ We check by inspection that for any $l_0\in\{a,b,c\}$ these three labels are distinct. Consequently,} every triangle of size 1 has its vertices
labeled `$a,b,c$'  {(see Fig. 1)}. Note that this property extends to every
triangle of size $2^M,$ which corresponds to putting labels on the
elements of $\mathcal V_M:$ every triangle of size $2^M$ has three distinct labels on its vertices.  {This is so because the vertices of any gasket triangle of size $2^M$ are of the form
$n2^Ma_2+m 2^Na_3, (n+1) 2^{M}a_2+m2^Ma_3, n2^M a_2+(m+1)2^Na_3,$ and so its labels are $\{l:= p_1^{n\cdot2^M}\circ p_2^{m\cdot2^M}(a),$ $p_1^{2^M}(l),$ $p_2^{2^M}(l)\}.$ As $p_1^{2^M}=p_2, p_2^{2^M}=p_1$ for
$M$ odd  and $p_1^{2^M}=p_1, p_2^{2^M}=p_2$ for $M$ even, in either case the vertices of such a triangle have  three distinct labels assigned: $l, p_1(l), p_2(l).$}

\begin{figure}[t!] \label{fig:fig1}
\centering
\includegraphics[width=13cm]{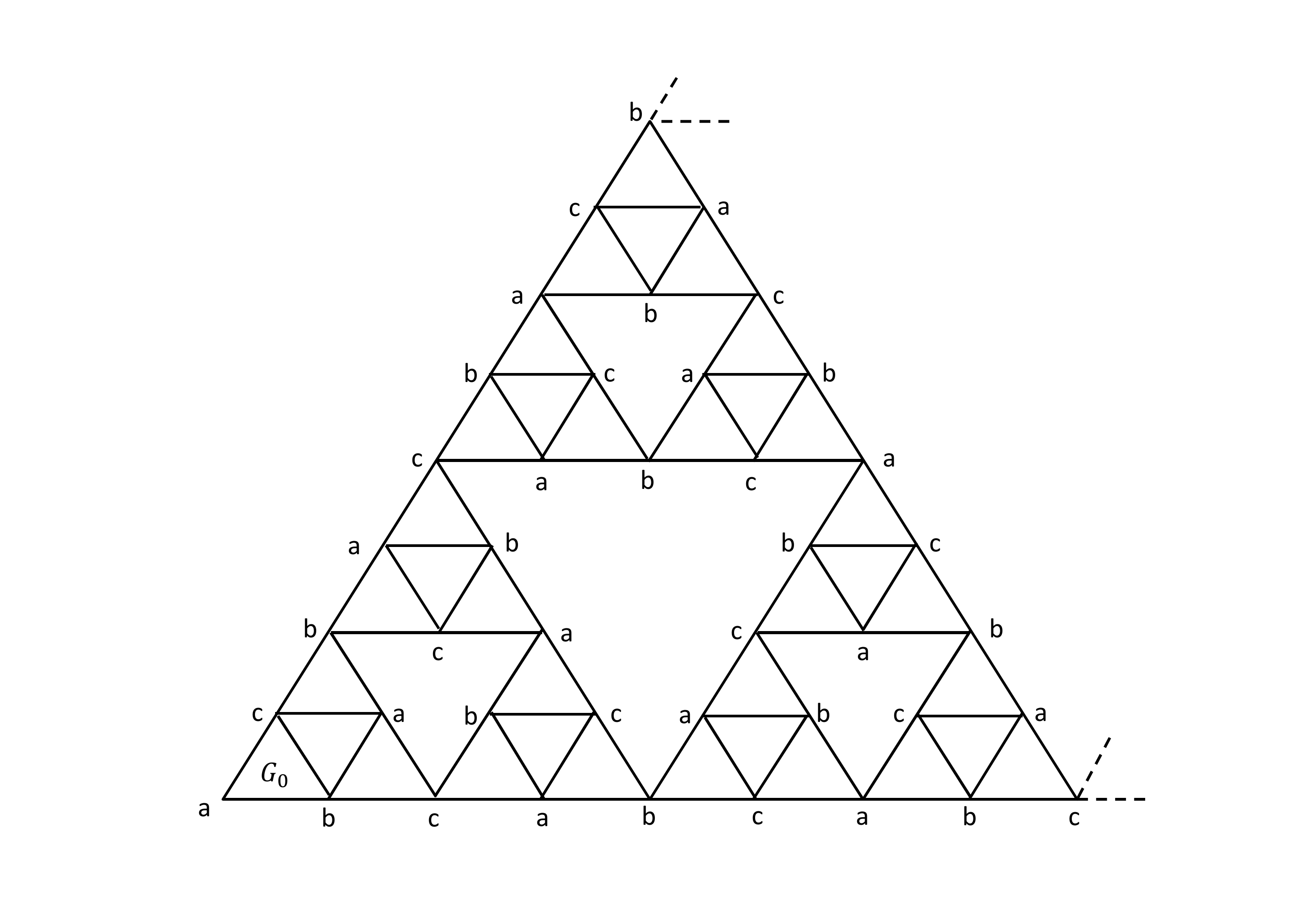}
\caption{Labels on $\mathcal V_0.$}
\end{figure}

\begin{figure}[t!] \label{fig:fig2}
\centering
\includegraphics[width=13cm]{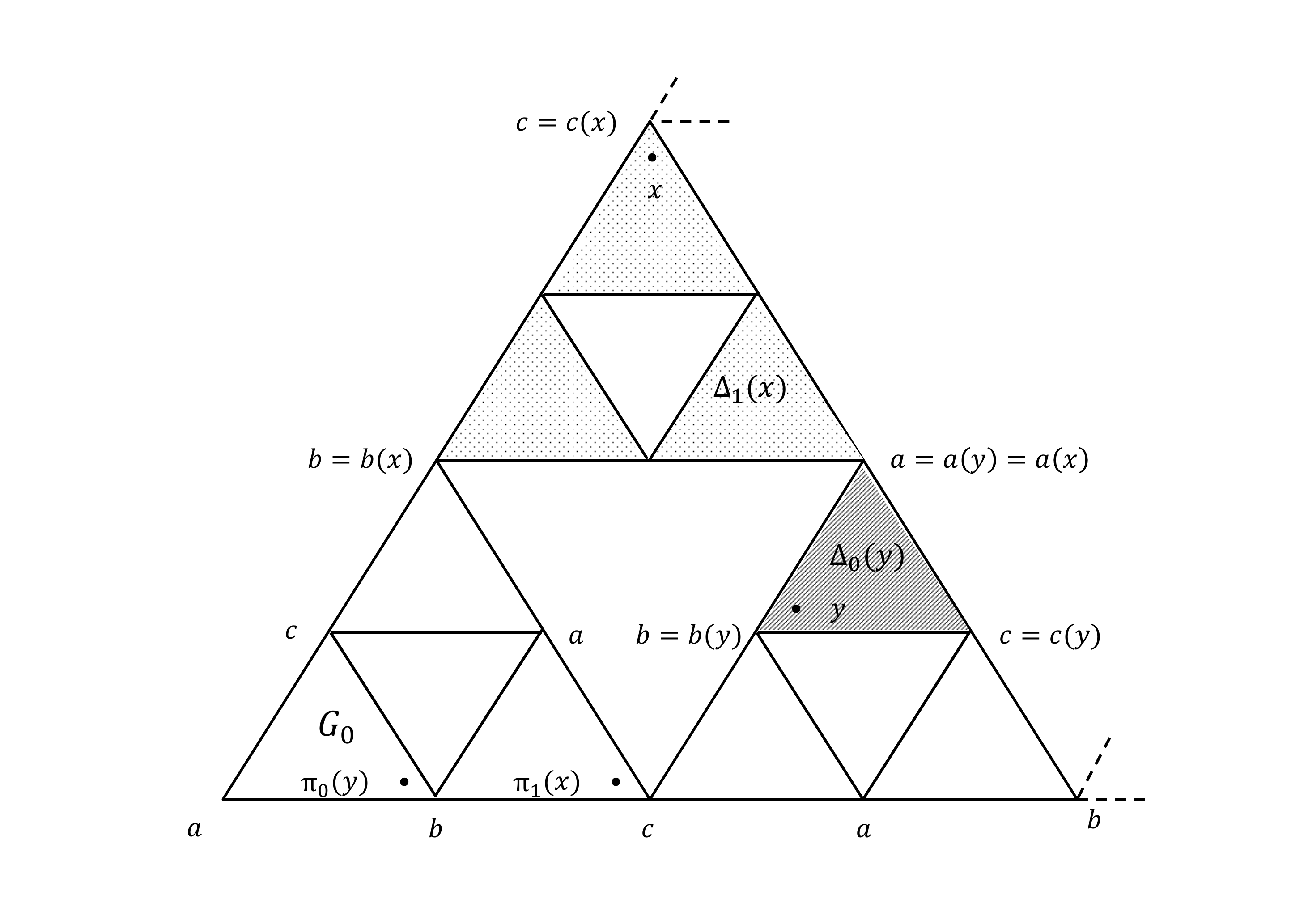}
\caption{Illustration of the action of $\pi_0$ and $\pi_1$.}
\end{figure}

Let $M\geq 0 $ be fixed. For $x\in \mathcal G\setminus \mathcal V_M,$ there
is a unique triangle of size $2^M$ that contains $x,$
$\Delta_M(x),$ and so $x$ can be written as $x=x_a a(x)+x_b
b(x)+x_c c(x),$ where $a(x),$ $b(x),$ $c(x)$ are the vertices of
$\Delta_M(x)$ with labels $a,b,c$  and $x_a,x_b,x_c\in (0,1),$
$x_a+x_b+x_c=1.$ Then we define the projection:
\[\mathcal G\setminus \mathcal V_M\ni x\mapsto \pi_M(x)= x_a\cdot a(M)+x_b\cdot b(M) +x_c\cdot c(M)\in \mathcal G_M, \]
where $a(M),$ $b(M),$
 $c(M)$ are the vertices of the triangle $\mathcal G_M$ with
 corresponding labels $a,b,c$  {(see Fig. 2).}
 When $x\in \mathcal V_M,$ then $x$ itself has a label assigned and it
 then mapped onto the corresponding vertex of $\mathcal G_M.$

For every $M=0,1,2,...,$ the gasket $\mathcal G_{M+1}$ consists of
$3$ isometric copies of $\mathcal G_M.$ These copies are denoted
by $\mathcal G_M^{(i)},$ $i=1,2,3.$  {For $i \neq j$ one has $m\left(\mathcal G_M^{(i)} \cap
\mathcal G_M^{(j)}\right)=0$}. Also, denote by
$\pi_{M,i}$ the restriction of $\pi_{M}$ to $\mathcal G^{(i)}_M$.

\

\subsection{The Brownian motion and subordinate processes on gaskets}

\subsubsection{Brownian motion}
The Brownian motion on the two-sided infinite gasket (by the two-sided gasket we mean
the set $\mathcal G^*=\mathcal G\cup i(\mathcal G),$ where $i$ is the reflection
of $\mathbb R^2$  with respect to the $y-$axis) was first defined in \cite{bib:BP}. It is a strong Markov and Feller process $\tilde Z=(\tilde Z_t, \pr_x)_{t\geq 0,x\in\mathcal G^*}$, whose transition density with respect to the Hausdorff measure is symmetric in its space variables, continuous, and fulfils
 the following subgaussian estimates:
\begin{eqnarray}
\label{eq:gaussian}
c_1 t^{-d_s/2} {\rm e}^{-c_2 \left(\frac{d(x,y)}{t^{1/d_w}}\right)^{d_w/(d_w-1)}} \leq \tilde g(t,x,y) \leq c_3 t^{-d_s/2} {\rm e}^{-c_4 \left(\frac{d(x,y)}{t^{1/d_w}}\right)^{d_w/(d_w-1)}}\hskip -1cm,\\
 \nonumber \phantom{} \hskip 7cm \quad x,y \in \mathcal G^*, \quad t>0,
\end{eqnarray}
with positive constants $c_1,...,c_4.$
It is not hard to see that  the Brownian motion $Z$ on the one-sided Sierpi\'{n}ski gasket $\mathcal G$ obtained from $\tilde Z$ by the projection $\mathcal G^*\to \mathcal G$,
whose transition density is equal to
\(g(t,x,y)= \tilde g(t,x,y)+\tilde g(t,x,i(y))\) for $x\neq 0$ and twice this quantity when $x=0,$ shares all these properties, the subgaussian estimates included (with possibly worse constants $c_i$). We stick to the estimate (\ref{eq:gaussian}) for $g(t,x,y)$
as well.

\subsubsection{Subordinate Brownian motion}\label{sec:sbm}
Let $S=(S_t, \pr)_{t \geq 0}$ be a subordinator, i.e. an increasing L\'{e}vy process taking values in $[0,\infty]$ with $S_0=0$.
The law of $S$, which will be denoted by $\eta_t({\rm d}u)$, is determined by the Laplace transform
$\int_0^{\infty} {\rm e}^{-\lambda s} \eta_t({\rm d}s) ={\rm e}^{-t\phi(\lambda)},$ $\lambda>0.$
The function $\phi:(0,\infty) \to [0,\infty)$ is called the Laplace exponent of $S$ and it has the representation
\begin{align}
\label{def:phi}
\phi(\lambda) = b \lambda + \int_0^{\infty} (1-{\rm e}^{-\lambda s})\nu({\rm d} s),
\end{align}
where $b\geq 0$ is called the drift term and $\nu$, called the L\'evy measure of $S$, is a $\sigma$-finite measure on $(0,\infty)$ satisfying $\int_0^{\infty} (s \wedge 1) \nu({\rm d}s) < \infty$. It is well known that when a function $\phi:(0,\infty) \to \R$ satisfies $\lim_{\lambda \to 0^{+}} \phi(\lambda)=0$ then it has can be represented by \eqref{def:phi} if and only if it is a Bernstein function \cite{bib:SSV}. If the measure $\eta_t({\rm d}u)$ is absolutely continuous with respect to the Lebesgue measure, then the corresponding density is  denoted by $\eta_t(u)$. For more properties of subordinators and Bernstein functions we refer to \cite{bib:Ber, bib:Ber2, bib:SSV}.

We always assume that $Z$ and $S$ are independent. The process $X=(X_t, \pr_x)_{t\geq 0,x\in\mathcal G}$ given by
$$
X_t := Z_{S_t}, \quad t \geq 0,
$$
is called the subordinate Brownian motion on $\cG$ (via subordinator $S$). It is also a symmetric Markov process {with respect to its natural filtration (assumed to fulfil the usual conditions)}, with c\`adl\`ag paths. Its transition probabilities are given by
$$
p(t,x,A) = \int_0^{\infty} \int_A g(u,x,y) m({\rm d}y) \eta_t({\rm d}u), \quad t>0, \quad x \in \cG, \quad A \in \cB(\cG).
$$
Throughout the paper we impose some assumptions on the subordinator $S$ which provide  sufficient regularity of the process $X$.

\begin{assumption} \label{ass:ass1} For every $t>0$ the following holds.
\begin{itemize}
\item[]
\begin{equation}\label{eq:assum1}
\eta_t(\left\{0\right\})=0 \quad \text{and} \quad \int_{0^{+}}^\infty \frac{1}{u^{d_s/2}}\,\mathbb \eta_t({\rm d}u)=:c_0(t) <\infty.
\end{equation}
\item[]
\begin{equation}\label{eq:log}
{\int_1^{\infty} \eta_t(u,\infty) \frac{{\rm d} u}{u}<\infty.}
\end{equation}
\end{itemize}
\end{assumption}

It is clear that when $S_t=t$ and $X_t=Z_t$ (in this case, $\eta_t({\rm d}u)=\delta_t({\rm d}u)$ and $\phi(\lambda)=\lambda$), then both conditions \eqref{eq:assum1} and \eqref{eq:log} are satisfied.

\begin{remark} \label{rem:rem1}
\noindent
{\rm {
\begin{itemize}
\item[(1)] The condition \eqref{eq:log} is tantamount to $\sum_{M=1}^\infty \eta_t(a^M, \infty)<\infty$, for every $a>1$. This summation property will be often explicitly used below. Moreover, one can verify that \eqref{eq:log} holds when $\int_1^{\infty} \log u \ \eta_t({\rm d}u) < \infty$ for all $t>0$.
\item[(2)] In most cases, the measure $\eta_t(\cdot)$ is not explicitly given, but the corresponding Laplace exponent $\phi$ is known. In this case, very often,  the integral condition in \eqref{eq:assum1} can be verified by using Tauberian theorems of exponential type (see, e.g., \cite{bib:F, bib:Kas}). For example, when $\phi(\lambda) \geq c \lambda^{\gamma}$ for all $\lambda > \lambda_0$ with some $c, \lambda_0 >0$ and $\gamma \in (0,1)$, then by \cite[Theorem 2.1 (ii)]{bib:F} for every $t>0$ there is $\tilde c >0$ such that $\eta_t(0,u] \leq e^{- \tilde c u^{-\gamma/(1-\gamma)}}$ for sufficiently small $u>0$, and the integral condition in \eqref{eq:assum1} holds. Furthermore, when $\phi$ is unbounded (in this case $S$ is not a compound Poisson process, see \cite{bib:Ber}), then for every $t>0$ the distribution of $S$ does not charge $\left\{0\right\}$, which is exactly the first part of \eqref{eq:assum1}. Also, in Lemma \ref{lm:verlog} below, we give a simple estimate which may be used in verification of \eqref{eq:log} when $\phi$ is known.
\end{itemize}}
}
\end{remark}
{
\begin{lemma}\label{lm:verlog}
Let $S$ be a subordinator with Laplace exponent $\phi$ and  {law $\eta_t(du)$ such that $\eta_t(\left\{0\right\})=0$ for every $t>0$}. Then we have
$$
\int_1^{\infty} \eta_t(u, \infty)\frac{{\rm d}u}{u} \leq t \int_0^1 \frac{\phi(\lambda)}{\lambda}\, {\rm d} \lambda + e^{-1}, \quad t > 0.
$$
\end{lemma}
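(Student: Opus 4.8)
The plan is to identify the left-hand side as a logarithmic moment of $S_t$ and then dominate that moment by an explicit function of $\phi$. First, since $\eta_t(u,\infty)=\pr(S_t>u)$, Tonelli's theorem gives
\[
\int_1^{\infty}\eta_t(u,\infty)\,\frac{{\rm d}u}{u}=\ex\left[\int_1^{\infty}\1_{\{S_t>u\}}\,\frac{{\rm d}u}{u}\right]=\ex\bigl[(\log S_t)^{+}\bigr],
\]
because $\int_1^{\infty}\1_{\{s>u\}}\,{\rm d}u/u$ equals $\log s$ for $s>1$ and $0$ otherwise (here $(\log s)^{+}=\max(\log s,0)$). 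So the whole task reduces to estimating $\ex\bigl[(\log S_t)^{+}\bigr]$.

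The next step is a pointwise calculus comparison. For $s>1$ one has $\log s=\int_1^{s}\frac{1-{\rm e}^{-\mu}}{\mu}\,{\rm d}\mu+\int_1^{s}\frac{{\rm e}^{-\mu}}{\mu}\,{\rm d}\mu$, and the second term is at most $\int_1^{\infty}{\rm e}^{-\mu}\,{\rm d}\mu={\rm e}^{-1}$; enlarging the first integral to $\int_0^{s}$ (which only adds a nonnegative quantity) and then substituting $\mu=\lambda s$ yields
\[
(\log s)^{+}\le\int_0^1\frac{1-{\rm e}^{-\lambda s}}{\lambda}\,{\rm d}\lambda+{\rm e}^{-1}\qquad\text{for all }s>0,
\]
the case $0<s\le1$ being trivial since the right-hand side is nonnegative. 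I would then insert $s=S_t$, take expectations, apply Tonelli once more (the integrand is nonnegative), and use $\ex\,{\rm e}^{-\lambda S_t}={\rm e}^{-t\phi(\lambda)}$ together with $1-{\rm e}^{-x}\le x$ to obtain
\[
\ex\bigl[(\log S_t)^{+}\bigr]\le\int_0^1\frac{1-{\rm e}^{-t\phi(\lambda)}}{\lambda}\,{\rm d}\lambda+{\rm e}^{-1}\le t\int_0^1\frac{\phi(\lambda)}{\lambda}\,{\rm d}\lambda+{\rm e}^{-1}.
\]
Combined with the first display this is exactly the assertion (and the inequality is of course vacuous if the right-hand side is infinite).

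There is no genuinely hard step: the argument is just a short chain of applications of Tonelli's theorem and the one-line estimate above. The only point needing a moment's thought is the choice of the comparison function $g(s)=\int_0^{s}\frac{1-{\rm e}^{-\mu}}{\mu}\,{\rm d}\mu$, which must dominate $(\log s)^{+}$ (up to the harmless additive ${\rm e}^{-1}$) while having an expectation $\ex\,g(S_t)$ that is expressible in closed form through $\phi$; the substitution $\mu=\lambda s$ is precisely what reconciles these two requirements. (In fact $g(s)\ge(\log s)^{+}$ already holds, so the additive ${\rm e}^{-1}$ could be dropped; it is kept only to keep the bound transparent. The hypothesis $\eta_t(\{0\})=0$ is not actually used in this estimate.)
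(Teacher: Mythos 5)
Your proof is correct and is essentially the paper's argument in a different dress: the paper derives $\int_1^\infty e^{-\lambda u}\,\eta_t(u,\infty)\,{\rm d}u \le \frac{1-e^{-t\phi(\lambda)}}{\lambda}\le \frac{t\phi(\lambda)}{\lambda}$ by integration by parts of the Laplace transform and then integrates over $\lambda\in(0,1)$, swapping the order of integration — which is exactly your Tonelli computation with the pointwise bound $(\log s)^{+}\le \int_0^1\frac{1-e^{-\lambda s}}{\lambda}\,{\rm d}\lambda+e^{-1}$ read in the other order, and the $e^{-1}$ arises in both cases from the same tail estimate $\int_1^\infty e^{-\mu}\mu^{-1}\,{\rm d}\mu\le e^{-1}$. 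Your side remarks are also accurate: $\eta_t(\{0\})=0$ enters the paper's proof only to identify $\eta_t[0,u]$ with $\eta_t(0,u]$ in the integration by parts and is dispensable, and the additive $e^{-1}$ could indeed be dropped since $\int_0^1\frac{1-e^{-\mu}}{\mu}\,{\rm d}\mu\ge 1-e^{-1}\ge \int_1^\infty \frac{e^{-\mu}}{\mu}\,{\rm d}\mu$.
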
}
{
\begin{proof}
Fix $t>0$. Direct integration by parts gives for every $\lambda>0$ that
$$
e^{-t\phi(\lambda)} = \int_0^{\infty} e^{-\lambda u} \eta_t({\rm d}u) = \lambda \int_0^{\infty} e^{-\lambda u} \eta_t(0,u]\, {\rm d}u \leq 1 - \lambda \int_1^{\infty} e^{-\lambda u} \eta_t(u,\infty) \rm{d}u,
$$
and, consequently,
$$
\int_1^{\infty} e^{-\lambda u} \eta_t(u,\infty) {\rm d}u \leq \frac{1-e^{-t\phi(\lambda)}}{\lambda} \leq \frac{t\phi(\lambda)}{\lambda}.
$$
By integrating this inequality in $\lambda$ over the interval $(0,1)$ and changing the order of integrals on the left hand side, we finally get
$$
\int_1^{\infty} \eta_t(u,\infty) \frac{{\rm d}u}{u} \leq t \int_0^1 \frac{\phi(\lambda)}{\lambda}\, {\rm d} \lambda + e^{-1},
$$
which completes the proof.
\end{proof}
}

Our Assumption \ref{ass:ass1} is satisfied by a wide class of subordinators. Below we discuss only several examples which are of special interest. For further examples we refer the reader to \cite{bib:BBKRSV, bib:SSV}.

\begin{example} {\rm In some cases, the densities of measures $\eta_t$ exist and precise bounds for them are known. However, for all examples listed here the Laplace exponent $\phi$ is explicitly given and Assumption \ref{ass:ass1} can be verified by using Lemma \ref{lm:verlog} and Tauberian theorems.
\noindent

\begin{itemize}
\item[(1)] \emph{$\alpha/d_w$-stable subordinators.} Let $\phi(\lambda)=\lambda^{\alpha/d_w}$, $\alpha \in (0,d_w]$. It is well known that in this case the measure $\eta_t({\rm d}u)$ is absolutely continuous with respect to Lebesgue measure, the scaling property $\eta_t(u)=t^{-d_w/\alpha} \eta_1(t^{-d_w/\alpha}u)$ holds, and $\eta_1(u) \approx c(\alpha) u^{-1-\alpha/d_w}$, when $u \to \infty$. When $\alpha \in (0,d_w)$, then the subordination via such subordinator leads to the purely jump process $X$ which is called \emph{$\alpha$-stable process} on $\cG$. The case $\alpha=d_w$ is different. As mentioned just after Assumption \ref{ass:ass1}, in this case the process $Z$ remains unchanged. For properties of the subordinate $\alpha$-stable processes we refer to \cite{bib:BSS, bib:KKw} (cf. also \cite{bib:CKu}).
\smallskip
\item[(2)] \emph{Mixture of several purely jump stable subordinators.} Let $\phi(\lambda)=\sum_{i=1}^n \lambda^{\alpha_i/d_w}$, $\alpha_i \in (0,d_w)$, $n \in \N$. Many of basic properties of the process subordinated via this subordinator can be established in a similar way as in \cite{bib:BSS}.
\smallskip
\item[(3)] \emph{$\alpha/d_w$-stable subordinator with drift.} Let $\phi(\lambda)=b\lambda + \lambda^{\alpha/d_w}$, $\alpha \in (0,d_w)$, $b>0$. Then the corresponding subordinator is a sum of a pure drift subordinator $b t$ and the purely jump $\alpha/d_w$-stable subordinator. In this case, $\phi(\lambda) \approx \lambda$ for $\lambda \to 0^{+}$, and $\phi(\lambda) \approx \lambda^{\alpha/d_w}$ for $\lambda \to \infty$.
\item[(4)] \emph{Relativistic $\alpha/d_w$-stable subordinator.} Let $\phi(\lambda)=(\lambda+\mu^{d_w/\alpha})^{\alpha/d_w}-\mu$, $\alpha \in (0,d_w)$, $\mu>0$. The subordination via such a subordinator leads to the so-called \emph{relativistic $\alpha$-stable} process on $\cG$. Since $\phi(\lambda) \approx \lambda$ for $\lambda \to 0^{+}$, and $\phi(\lambda) \approx \lambda^{\alpha/d_w}$ for $\lambda \to \infty$, similarly as before, Assumption \ref{ass:ass1} is satisfied.
\smallskip
\item[(5)] If $S$ is a subordinator with Laplace exponent $\phi(\lambda)=\lambda^{\alpha/d_w}[\log(1+\lambda)]^{\beta/d_w}$, $\alpha \in (0,d_w)$, $\beta \in (-\alpha, 0)$ or $\beta \in (0,d_w-\alpha)$, then Assumption \ref{ass:ass1} also holds.
\end{itemize}
}
\end{example}

An important consequence of the first part of assumption \eqref{eq:assum1} is that the process $X$ has symmetric and strictly positive transition densities given by
\begin{equation}\label{eq:subord}
p(t,x,y) = \int_0^{\infty} g(u,x,y) \eta_t({\rm d}u), \quad t>0, \quad x, y \in \cG.
\end{equation}
The second part of this condition guarantees that
\begin{equation}\label{eq:sup-of-g}
\sup_{x,y \in \cG} p(t,x,y) \leq c_3 c_0(t) < \infty,\end{equation}
that for each fixed $t >0$, $p(t,\cdot,\cdot)$ is a continuous function on $\cG \times \cG$, and for each fixed $x, y \in \cG$, $p(\cdot,x,y)$ is a continuous function on $(0,\infty)$.

By general theory of subordination (see, e.g., \cite[Chapter 12]{bib:SSV}) the process $X$ is a Feller process and, in consequence, a strong Markov process. It is also easy to check that by \eqref{eq:assum1} it has the strong Feller property.

Under the assumption \eqref{eq:log} we obtain the following regularity for suprema of the subordinate process.
It will be pivotal for our further investigations.

\begin{lemma} \label{lm:supr_sum}
Let the condition \eqref{eq:log} of Assumption \ref{ass:ass1} hold. Then for every $t>0$ and $a>1$ we have
\begin{equation}\label{eq:sup}
\sum_{M=1}^\infty \sup_{x\in \mathcal G} \pr_x[\sup_{s\leq t}d(X_s,x)>a^M]<\infty.
\end{equation}
\end{lemma}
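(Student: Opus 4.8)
The plan is to dominate the supremum of the subordinate process by that of the underlying Brownian motion run up to the random time $S_t$, reduce the resulting quantity to a heat-kernel exit estimate for $Z$, and then match the emerging geometric series against the logarithmic integrability \eqref{eq:log}.

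Since $t\mapsto S_t$ is nondecreasing, for every $x\in\cG$ one has $\sup_{s\leq t}d(X_s,x)=\sup_{s\leq t}d(Z_{S_s},x)\leq\sup_{u\leq S_t}d(Z_u,x)$, so by the independence of $Z$ and $S$ and conditioning on $S_t$,
\[
\sup_{x\in\cG}\pr_x\Big[\sup_{s\leq t}d(X_s,x)>a^M\Big]\;\leq\;\int_0^{\infty}q\big(v,a^M\big)\,\eta_t({\rm d}v),\qquad
q(v,r):=\sup_{x\in\cG}\pr_x\Big[\sup_{u\leq v}d(Z_u,x)>r\Big].
\]
The first step is to establish that there are constants $c_5,c_6>0$ such that
\[
q(v,r)\;\leq\;c_5\exp\!\Big(-c_6\,(r^{d_w}/v)^{1/(d_w-1)}\Big)\qquad\text{whenever }r^{d_w}\geq v,
\]
while trivially $q(v,r)\leq1$ in all cases. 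I would derive this from the subgaussian upper bound in \eqref{eq:gaussian} by the standard first-exit argument: with $\tau:=\inf\{s>0:d(Z_s,x)>r\}$, decompose $\{\tau\leq v\}$ according to whether $d(Z_v,Z_\tau)\leq r/2$ (in which case $d(Z_v,x)\geq r/2$) or $d(Z_v,Z_\tau)>r/2$ (handled by the strong Markov property at $\tau$); in both cases one is reduced to bounding $\sup_{z\in\cG}\pr_z[d(Z_w,z)>r/2]$ for $0<w\leq v$, which by integrating the upper Gaussian kernel over $\{y:d(z,y)>r/2\}$ together with the volume bound $m(B(z,\rho))\leq C\rho^{d_f}$ is at most $C'\exp(-c(r^{d_w}/w)^{1/(d_w-1)})$ once $r^{d_w}\geq w$, and $w\mapsto\exp(-c(r^{d_w}/w)^{1/(d_w-1)})$ is increasing so the supremum over $w\leq v$ is attained at $w=v$.

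By Tonelli, $\sum_{M\geq1}\sup_{x\in\cG}\pr_x[\sup_{s\leq t}d(X_s,x)>a^M]\leq\int_0^{\infty}\big(\sum_{M\geq1}q(v,a^M)\big)\,\eta_t({\rm d}v)$, so it suffices to bound $\sum_{M\geq1}q(v,a^M)$ by an affine function of $(\log v)_+$ uniformly in $v$. Fix $v>0$ and let $N=N(v)$ be the least integer $M\geq1$ with $a^{Md_w}\geq v$. The terms with $1\leq M<N$ are each at most $1$, contributing at most $N-1$, which vanishes if $v\leq1$ and is at most $(\log v)/(d_w\log a)$ if $v>1$. For $M\geq N$ the exit estimate applies with $r=a^M$, and since $a^{Md_w}/v\geq a^{(M-N)d_w}$,
\[
\sum_{M\geq N}q(v,a^M)\;\leq\;c_5\sum_{j\geq0}\exp\!\Big(-c_6\,a^{jd_w/(d_w-1)}\Big)\;=:\;c_7<\infty,
\]
the last series converging because $a>1$. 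Hence $\sum_{M\geq1}q(v,a^M)\leq c_7+(\log v)_+/(d_w\log a)$ for every $v>0$, and integrating against the finite measure $\eta_t$ yields
\[
\sum_{M=1}^{\infty}\sup_{x\in\cG}\pr_x\Big[\sup_{s\leq t}d(X_s,x)>a^M\Big]\;\leq\;c_7\,\eta_t([0,\infty))+\frac{1}{d_w\log a}\int_1^{\infty}(\log v)\,\eta_t({\rm d}v).
\]

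It remains to note that $\log v=\int_1^v u^{-1}\,{\rm d}u$ for $v\geq1$, so by Fubini $\int_1^{\infty}(\log v)\,\eta_t({\rm d}v)=\int_1^{\infty}u^{-1}\eta_t([u,\infty))\,{\rm d}u=\int_1^{\infty}\eta_t(u,\infty)\,\frac{{\rm d}u}{u}$ (the atomic part of $\eta_t$ being Lebesgue-negligible), which is finite precisely by \eqref{eq:log}; this proves \eqref{eq:sup}. The only step requiring genuine work is the exit estimate, and it is routine once \eqref{eq:gaussian} is available; conceptually the heart of the matter is that the number of scales $a^M$ below $S_t^{1/d_w}$ is of order $\log S_t$, which is exactly the quantity that \eqref{eq:log} renders integrable.
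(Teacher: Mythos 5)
Your proof is correct, and its skeleton coincides with the paper's: dominate $\sup_{s\le t}d(X_s,x)$ by $\sup_{u\le S_t}d(Z_u,x)$, invoke a subgaussian maximal estimate for $Z$, integrate against $\eta_t$, and feed the result into \eqref{eq:log}. The two places where you diverge are worth noting. First, you re-derive the maximal inequality for $Z$ by the standard first-exit/strong Markov argument, whereas the paper simply cites Barlow's formula (3.21); your derivation is routine and sound (the constant lost in passing from $r$ to $r/2$ in the annuli computation is harmless). Second, and more substantively, the final accounting differs: the paper, for each fixed scale $r=a^M$, splits the $\eta_t$-integral at $u=r^{d_w/2}$ and then uses the geometric-subsequence reformulation of \eqref{eq:log} from Remark \ref{rem:rem1}(1) to sum the tails $\eta_t(a^{Md_w/2},\infty)$; you instead apply Tonelli, count the number of scales $a^M$ below $S_t^{1/d_w}$ (of order $\log S_t$), and identify $\int_1^{\infty}(\log v)\,\eta_t({\rm d}v)$ with $\int_1^{\infty}\eta_t(u,\infty)\,u^{-1}{\rm d}u$ exactly, which is \eqref{eq:log} verbatim. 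Your route makes the heuristic behind \eqref{eq:log} more transparent (it is precisely a log-moment condition on $S_t$), while the paper's split avoids the Fubini identity and interfaces directly with the form of \eqref{eq:log} used elsewhere in the paper. Both are complete; no gap.
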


\begin{proof}
Observe that for every $x \in \cG$, $r>0$ and $t>0$ we have, since $S_0(w)=0$ a.s.,
$$
\left\{w:\sup_{s\leq t}d(Z_{S_s(w)}(w),Z_{S_0(w)}(w))>r \right\} \subseteq \left\{w:\sup_{s\leq S_t(w)}d(Z_s(w),Z_0(w))>r \right\}.
$$
This and \cite[formula (3.21)]{bib:Bar} 
thus yield that for every $x \in \cG$
\begin{align*}
\pr_x[\sup_{s\leq t}d(X_s,x)>r] & \leq \pr_x[\sup_{s\leq S_t}d(Z_s,x)>r] \leq c_1 \int_0^{\infty} {\rm e}^{-c_2 \left(ru^{-1/d_w}\right)^{d_w/(d_w-1)}} \eta_t({\rm d}u) \\
& = c_1 \int_0^{r^{d_w/2}} + c_1 \int_{r^{d_w/2}}^{\infty}
\leq c_1 {\rm e}^{-c_2 r^{d_w/(2(d_w-1))}} + c_1 \eta_t(r^{d_w/2},\infty).
\end{align*}
By \eqref{eq:log}, the latter sum for $r=a^M$ is a term of a convergent series.
\end{proof}

For an open bounded set $D \subset \cG$ by $\tau_D:=\inf\left\{t \geq 0: X_t \notin D \right\}$ we denote the first exit time of the process $X$ from $D$. We will  need the following fact on the mean exit time for balls.

\begin{lemma} \label{lem:meanexittime} We have $\lim_{r \to 0^+} \sup_{x \in \cG} \ex_x \tau_{B(x,r)} = 0$.
\end{lemma}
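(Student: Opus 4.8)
The plan is to bound $\ex_x \tau_{B(x,r)}$ uniformly in $x$ by controlling the probability that $X$ leaves $B(x,r)$ before a fixed time, then optimizing. First I would recall the standard identity: for any $t>0$,
\[
\ex_x \tau_{B(x,r)} = \int_0^\infty \pr_x[\tau_{B(x,r)} > s]\,{\rm d}s.
\]
Rather than integrating directly, I would instead use the elementary bound that for a stopping time $\tau$ and any $t>0$,
\[
\ex_x \tau \le t + \ex_x\big[ \tau \cdot \mathbf{1}_{\{\tau > t\}}\big]
\quad\text{and, via the Markov property,}\quad
\pr_x[\tau_{B(x,r)} > kt] \le \big(\sup_{y}\pr_y[\tau_{B(x,r)}>t]\big)^k,
\]
so that if $q(t,r) := \sup_{x\in\cG}\pr_x[\tau_{B(x,r)} > t]$ satisfies $q(t,r) \le 1/2$ for suitable $t$, then $\ex_x\tau_{B(x,r)} \le 2t$ for all $x$. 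Since $\{\tau_{B(x,r)} > t\} \supseteq$ nothing useful, I would instead bound $\pr_x[\tau_{B(x,r)} \le t] \le \pr_x[\sup_{s\le t} d(X_s,x) \ge r]$, i.e. $q(t,r) \le \pr_x[\sup_{s\le t} d(X_s,x) < r]$ is the wrong direction — so more carefully: $\{\tau_{B(x,r)} > t\}$ forces $\sup_{s\le t}d(X_s,x)\le r$, which gives no smallness. The correct route is the complementary one: I want $\pr_x[\tau_{B(x,r)} \le t]$ small, i.e. I want to show the process is unlikely to leave quickly. This is false for small $t$; rather, I want $t$ small so that $\pr_x[\tau_{B(x,r)}\le t]$ is bounded away from... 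Let me restate: the right statement is that $\ex_x\tau_{B(x,r)}$ is small because $\tau_{B(x,r)}$ itself is small with high probability, which follows because $X$ moves by jumps and diffusion and $B(x,r)$ is a small ball.

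Concretely, I would use the exit-time estimate already available through the subgaussian bounds: by the same reasoning as in Lemma \ref{lm:supr_sum}, for every $x$ and $t>0$,
\[
\pr_x[\tau_{B(x,r)} > t] \le \pr_x\Big[\sup_{s\le t} d(X_s,x) \le r\Big],
\]
and this is useless; instead I bound using the transition density directly:
\[
\pr_x[X_t \in B(x,r)] = \int_{B(x,r)} p(t,x,y)\,m({\rm d}y) \le c_3 c_0(t)\, m(B(x,r)) \to 0
\]
as $r\to 0$ for fixed $t$, uniformly in $x$ by \eqref{eq:sup-of-g} and the fact that $m(B(x,r))\le Cr^{d_f}$ uniformly (which follows from Lemma \ref{lem:kolnierzyk} type volume estimates on the gasket). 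Hence, writing $A_t(r) := \sup_{x}\pr_x[X_t\in B(x,r)]$, one has $A_t(r) \to 0$ as $r\to 0$. Now by the strong Markov property at $\tau_{B(x,r)}$ (and a Lévy-type maximal inequality for the càdlàg process $X$, or simply the observation that on $\{\tau_{B(x,r)} \le t/2\}$ the point $X_t$ is, with probability at least $1/2$ for $t$ small enough, within $r$ of $X_{\tau_{B(x,r)}}$ — this requires the equicontinuity statement of Lemma \ref{lem:meanexittime}'s spirit, so instead I'd use a cleaner Chebyshev/Doob argument), I get $\pr_x[\tau_{B(x,r)} \le t] $ controlled.

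The cleanest version: fix $t$, and note $\ex_x\tau_{B(x,r)} \le t + t\sum_{k\ge 1}\pr_x[\tau_{B(x,r)} > kt]$. It remains to show $\sup_x\pr_x[\tau_{B(x,r)}>t]$ can be made, say, $\le 1/2$ by choosing $r$ small — equivalently that the process is very likely to have \emph{left} $B(x,r)$ by time $t$ once $r$ is tiny. For a process with strictly positive jump density this is clear: $\pr_x[\tau_{B(x,r)} > t] \le \pr_x[\text{no jump of size} > 2r \text{ before } t] + (\text{prob. the diffusive part stays})$; using \eqref{eq:subord} and the subgaussian lower bound in \eqref{eq:gaussian}, $\pr_x[d(X_t,x) > r] \ge \int_{d(y,x)>r} p(t,x,y)m({\rm d}y)$, and for fixed $t$ this tends to $1$ as $r\to 0$ uniformly in $x$ (since $\int_\cG p(t,x,y)m({\rm d}y)=1$ and the mass near $x$ vanishes with $r$). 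This gives $\sup_x\pr_x[\tau_{B(x,r)} \le t]\ge \sup_x\pr_x[d(X_t,x)>r] \to 1$, so $\sup_x\pr_x[\tau_{B(x,r)}>t] \to 0$, hence $\le 1/2$ for $r$ small, and then $\ex_x\tau_{B(x,r)} \le 2t$ for all such $r$ and all $x$. Letting $t$ be whatever fixed value we started with and then iterating the argument with $t$ itself shrinking gives $\lim_{r\to 0}\sup_x \ex_x\tau_{B(x,r)} = 0$.

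\textbf{Main obstacle.} The delicate point is making every estimate \emph{uniform in the center} $x$: the gasket is not translation-invariant, so I must ensure that the volume bound $m(B(x,r)) \le Cr^{d_f}$ holds with $C$ independent of $x$ (true on the gasket but needs the Ahlfors-regularity of $m$, implicit in the setup), and that the uniform sup bound \eqref{eq:sup-of-g} together with $\int p(t,x,y)m({\rm d}y)=1$ genuinely forces the near-diagonal mass to vanish uniformly. The subgaussian estimates \eqref{eq:gaussian} and the bound \eqref{eq:sup-of-g} are exactly what make this go through; no homogeneity is needed beyond them. A secondary technical care is the passage from "$X_t$ has left $B(x,r)$ with high probability" to "$\tau_{B(x,r)} \le t$ with high probability," which is immediate since $X_t \notin B(x,r) \Rightarrow \tau_{B(x,r)} \le t$.
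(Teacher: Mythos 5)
Your final argument is correct and is essentially the paper's proof: bound $\pr_x[\tau_{B(x,r)}>t]\le \pr_x[X_t\in B(x,r)]\le c_3 c_0(t)\, m(B(x,r))\le C c_0(t) r^{d_f}$ uniformly in $x$, iterate via the Markov property to get $\pr_x[\tau_{B(x,r)}>kt]\le q^k$, sum the geometric series, and finally take $t=\epsilon$ and $r$ small enough that $q\le 1/2$. (One cosmetic slip: where you write $\sup_x\pr_x[\tau_{B(x,r)}\le t]\to 1$ you actually need the infimum over $x$, but the uniform bound $A_t(r)\to 0$ you established just before already delivers exactly that.)
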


\begin{proof}
As in \cite[Lemma 3.9(c)]{bib:Bar} or \cite[Prop. 2.14]{bib:BSS}
 we write, for any $t>0,$ $x\in\cG$
\begin{eqnarray*}
\mathbf P_x[{\tau_{B(x,r)}}>t]&\leq&\mathbf P_x[X_t\in B(x,r)]=\int_{B(x,r)} p(t,x,y)\,{\rm d}y\\
&\leq& m(B(x,r))\sup_{y\in\cG} p(t,x,y)\leq c_3 r^{d_f}c_0(t)=: a(r,t_0).
\end{eqnarray*}
From Markov property we get, for $k=1,2,...$
\[\mathbf P_x[\tau_{B(x,r)}>kt_0]\leq a(r,t_0)^k,\]
and, as long as $a(r,t_0)<1,$
\begin{eqnarray*}\mathbf E_x[\tau_{B(x,r)}]&=&\int_0^\infty \mathbf P_x[\tau_{B(x,r)}>t]{\rm d}t = \int_0^{t_0}\mathbf P_x[\tau_{B(x,r)}>t]{\rm d}t+t_0\sum_{k=1}^\infty \mathbf P_x[\tau_{B(x,r)}>kt_0]
\\&\leq& \frac{t_0}{1-a(r,t_0)}.
\end{eqnarray*}
Fix $\epsilon>0$ and $t_0={\epsilon}.$ For $r<r_0(\epsilon)= (2c_3c_0(\epsilon))^{-1/d_f})$ we get $\frac{1}{1-a(r,t_0)}<2,$ and thus for any $x\in\cG$ $\mathbf E_x[\tau_{B(x,r)}]\leq 2\epsilon,$ which completes the proof.
\end{proof}

By $\pr^t_{x,y}$ we denote the bridge measure with respect to process $X$ on $D([0,t],\cG)$, i.e. the measure concentrated on c\`adl\`ag paths of $X$ which start from $x \in \cG$ at time $0$ and end at $y \in \cG$ at time $t>0$. Subordinate process $X$ is a Feller process with sufficiently regular transition densities, and therefore in our case such a measure always exists (see \cite{bib:ChUB} and references therein). Formally, for every $x,y \in \cG$ and $t>0$ the measure $\pr^t_{x,y}$ is the conditional law of the process $(X_s)_{0 \leq s \leq t}$ given $X_t=y$ under $\mathbf P_x,$ such that for every $0 \leq s < t$ and $A \in \sigma(X_s: 0 \leq u \leq s)$ we have
\begin{align}\label{eq:bridge}
p(t,x,y) \pr^t_{x,y}(A) = \ex_x\left[\1_A p(t-s,X_s,y)\right].
\end{align}
{Moreover, an essential consequence of the Feller property of $X$ is that after performing the integration with respect to $m({\rm d} y)$, \eqref{eq:bridge} extends to $s=t$. Indeed, for every bounded Borel function $f$ and $A \in \sigma(X_s: 0 \leq u \leq t)$ we have
\begin{align}\label{eq:extbridge}
\ex_x \left[\1_A f(X_t)\right] = \int_{\cG} \ex^t_{x,y}\left[\1_A \right] f(y) p(t,x,y) m({\rm d} y), \quad x \in \cG.
\end{align}
For justification of \eqref{eq:extbridge} we refer to \cite[Section 3]{bib:ChUB}.}
Since $p(t,x,y)=p(t,y,x)$ for $x, y \in \cG$ and $t>0$, the processes $\left((X_s)_{0 \leq s \leq t}, \pr^t_{x,y}\right)$ and $\left((X_{t-s})_{0 \leq s \leq t}, \pr^t_{y,x}\right)$ are indentical in law. Below we will refer to this property as the symmetry of the brigde measure.

\subsubsection{The reflected process}
The processes we will be mostly working with are the reflected subordinate Brownian motions on $\mathcal G_M,$
defined by
\[X_t^M=\pi_M(X_t),\]
where $\pi_M$ is the projection $\mathcal G\to \mathcal G_M,$ described in Section \ref{sec:gasket}. Similar process based on the ordinary Brownian motion on $\mathcal G^*$
was studied in \cite{bib:KPP-PTRF}. Only $M=0$ was considered there, but for arbitrary $M$
the properties of the reflected Brownian motion are similar. Also, the construction of the reflected Brownian motion from the Brownian motion on $\mathcal G$ instead of $\mathcal G^*$ leads to the same process.  The process $Z_t^M=\pi_M(Z_t)$
has strictly positive and symmetric transition densities with respect to $m,$ which are given by the formula
\[g^M(t,x,y)=\left\{\begin{array}{ll}
\sum_{y'\in \pi_M^{-1}(y)} g(t,x,y'), & \mbox{when } x,y\in\mathcal G_M,\, y\notin \mathcal V_M\setminus\{0\},\\[2mm]
2\sum_{y'\in \pi_M^{-1}(y)} g(t,x,y'), & \mbox{when } x\in\mathcal G_M,\, y\in \mathcal V_M \backslash \left\{0\right\}.\end{array}
\right.\]
For each fixed $M \in \mathbb{Z}_+
$ the function $g^M(t,x,y)$ is jointly continuous in $(t,x,y)$ and symmetric in its space variables. This was proved in \cite[Lemma 4 and Lemma 7]{bib:KPP-PTRF} for $M=0$, but the same arguments directly extend to any $M \in \mathbb Z_+$. Similarly, we put
\[p^M(t,x,y)=\left\{\begin{array}{ll}
\sum_{y'\in \pi_M^{-1}(y)} p(t,x,y'), & \mbox{when } x,y\in\mathcal G_M,\, y\notin \mathcal V_M\setminus \{0\},\\[2mm]
2\sum_{y'\in \pi_M^{-1}(y)} p(t,x,y'), & \mbox{when } x\in\mathcal G_M,\, y\in \mathcal V_M \backslash \left\{0\right\}.\end{array}
\right.\]
It is an easy observation that the projection commutes with subordination, i.e. the formula
\begin{equation}\label{eq:proj-sub}
p^M(t,x,y)=\int_0^\infty g^M(u,x,y)\eta_t({\rm d}u),\;\;\;x,y\in \mathcal G_M, \;t>0,
\end{equation}
defines the transition densities of the symmetric Markov process $X^M$. To discuss further properties of the reflected process $X^M$ we need the following lemma.

\begin{lemma}
\label{lm:series} For each fixed $t>0$ one has:
\begin{itemize}
\item[(a)]    \begin{equation}\label{eq:gauss-pom} \sup_{x,y \in \mathcal G_M }  \sum_{\stackrel{y^{'} \in \pi^{-1}_M(y)}{y'\notin \mathcal G_{M+1}}} g(t,x, y') \leq  C  {2^{-Md_f}}\left(\frac{2^M}{t^{1/d_w}}\vee 1\right)^{d_f-\frac{d_w}{d_w-1}}{\rm e}^{-\tilde c_4 \left(\frac{2^M}{t^{1/d_w}}\vee 1\right)^{\frac{d_w}{d_w-1}}}, \quad M \in \Z,\end{equation}
with certain numerical constant $C>0$ and $\tilde c_4=\frac{c_4}{2^{d_w/(d_w-1)}}.$  In particular, there is a universal constant  $c_5>0$ such that
\begin{align}\label{eq:gmbound}
g^M(t,x,y) \leq c_5 {(t^{-d_s/2} \vee 2^{-Md_f})}, \quad x , y \in
\cG_M, \quad t>0, \quad M \in \Z.
\end{align}
\item[(b)] For
\begin{equation}\label{eq:summ}C(M,t) := \sup_{x,y \in \mathcal G_M }  \sum_{\stackrel{y^{'} \in \pi^{-1}_M(y)}{y'\notin \mathcal G_{M+1}}} p(t,x, y^{\prime}), \quad t>0, \quad M \in \Z, \end{equation}
one has
$$
\sum_{M=1}^{\infty} C(M,t) < \infty.
$$
In particular, $C(M,t) \to 0$ as $M \to \infty$.
\end{itemize}
\end{lemma}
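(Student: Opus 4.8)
The plan is to reduce everything to the subgaussian upper bound in \eqref{eq:gaussian} together with the collar estimate of Lemma \ref{lem:kolnierzyk} and the summability condition \eqref{eq:log}. I first prove part (a), then deduce part (b) from (a) by integrating against $\eta_t({\rm d}u)$ and invoking Lemma \ref{lm:supr_sum} (or rather its proof).

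For part (a), fix $x,y\in\cG_M$ and look at the fibre $\pi_M^{-1}(y)$. The points $y'\in\pi_M^{-1}(y)$ that do \emph{not} lie in $\cG_{M+1}$ are far from $\cG_M$: since $\cG_{M+1}=B(0,2^{M+1})$ and $x\in\cG_M=B(0,2^M)$, any such $y'$ satisfies $d(x,y')\geq d(0,y')-2^M\geq 2^{M+1}-2^M=2^M$, and more generally the fibre points sit on a lattice of triangles of size $2^M$, so they can be indexed by $\Z^2$-type lattice vectors with $d(x,y')\gtrsim 2^M k$ for the $k$-th "shell". The upper Gaussian bound gives $g(t,x,y')\leq c_3 t^{-d_s/2}{\rm e}^{-c_4(d(x,y')/t^{1/d_w})^{d_w/(d_w-1)}}$. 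Splitting the exponent $d(x,y')^{d_w/(d_w-1)}\geq \tfrac12 d(x,y')^{d_w/(d_w-1)}+\tfrac12(2^M)^{d_w/(d_w-1)}$ peels off the factor ${\rm e}^{-\tilde c_4(2^M/t^{1/d_w})^{d_w/(d_w-1)}}$ with $\tilde c_4=c_4/2^{d_w/(d_w-1)}$, and what remains is a convergent sum over the lattice, $\sum_{y'} t^{-d_s/2}{\rm e}^{-\tfrac{c_4}{2}(d(x,y')/t^{1/d_w})^{d_w/(d_w-1)}}$. The number of fibre points at geodesic distance $\asymp 2^M k$ from $x$ is $O(k)$ (each shell of the lattice of $2^M$-triangles in $\cG$), so this sum is comparable to $t^{-d_s/2}\sum_{k\geq 1}k\,{\rm e}^{-\tfrac{c_4}{2}(2^Mk/t^{1/d_w})^{d_w/(d_w-1)}}$; estimating this geometric-type sum by its largest term times a constant and rewriting $t^{-d_s/2}=2^{-Md_f}(2^M t^{-1/d_w})^{d_f}\cdot(\text{power of }2^M t^{-1/d_w})$ using $d_s=2d_f/d_w$, one arrives exactly at the right-hand side of \eqref{eq:gauss-pom}, with the $\vee 1$ coming from the elementary fact that a polynomial-times-Gaussian in the variable $2^M t^{-1/d_w}$ is bounded by its value at $\max(2^M t^{-1/d_w},1)$ up to a constant. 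The bound \eqref{eq:gmbound} on $g^M$ then follows: the fibre points inside $\cG_{M+1}$ contribute at most a fixed finite number of terms, each $\leq c_3 t^{-d_s/2}$ by \eqref{eq:gaussian}, giving $O(t^{-d_s/2})$; the points outside $\cG_{M+1}$ contribute the expression in \eqref{eq:gauss-pom}, which is $O(2^{-Md_f})$ uniformly in $t$ (since $u^{d_f-d_w/(d_w-1)}{\rm e}^{-\tilde c_4 u^{d_w/(d_w-1)}}$ is bounded for $u\geq 1$) — so $g^M\leq c_5(t^{-d_s/2}\vee 2^{-Md_f})$.

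For part (b), $C(M,t)$ is obtained from the left side of \eqref{eq:gauss-pom} by integrating in $u$ against $\eta_t({\rm d}u)$: by \eqref{eq:subord} and Tonelli,
\[
C(M,t)=\sup_{x,y\in\cG_M}\int_0^\infty\!\!\sum_{\stackrel{y'\in\pi_M^{-1}(y)}{y'\notin\cG_{M+1}}}\!\!g(u,x,y')\,\eta_t({\rm d}u)
\leq C\,2^{-Md_f}\!\int_0^\infty\!\Big(\tfrac{2^M}{u^{1/d_w}}\vee 1\Big)^{d_f-\frac{d_w}{d_w-1}}{\rm e}^{-\tilde c_4(\frac{2^M}{u^{1/d_w}}\vee 1)^{\frac{d_w}{d_w-1}}}\eta_t({\rm d}u).
\]
Split the $u$-integral at $u=2^{Md_w}$. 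On $u\geq 2^{Md_w}$ the bracket is $\vee 1$, so the integrand is $\leq C$, and this piece contributes $\leq C\,2^{-Md_f}\eta_t(2^{Md_w},\infty)$, which is summable in $M$ by Remark \ref{rem:rem1}(1) applied with $a=2^{d_w}>1$ (indeed it is even summable without the $2^{-Md_f}$ factor). On $u<2^{Md_w}$ one has $2^M u^{-1/d_w}>1$, and the polynomial-times-Gaussian factor is bounded by a constant uniformly; so this piece contributes $\leq C\,2^{-Md_f}\eta_t(0,2^{Md_w}]\leq C\,2^{-Md_f}$, which is summable in $M$ since $d_f>0$. Adding the two estimates gives $\sum_{M=1}^\infty C(M,t)<\infty$, and in particular $C(M,t)\to 0$.

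The main obstacle is the combinatorial/geometric bookkeeping in part (a): one must correctly identify the fibre $\pi_M^{-1}(y)\setminus\cG_{M+1}$ as a lattice indexed so that the number of points with $d(x,y')\in[2^M k,2^M(k+1))$ grows only polynomially (linearly) in $k$, and be careful that this count is uniform over $x\in\cG_M$ and $y\in\cG_M$. Once the correct "one exponential peeled off, a convergent lattice sum left over" decomposition is in place, everything else is the routine estimate of $\sum_k k\,e^{-c(2^M k/t^{1/d_w})^{d_w/(d_w-1)}}$ by a constant multiple of its leading term, plus the rewriting via $d_s=2d_f/d_w$; part (b) is then a short Tonelli-plus-\eqref{eq:log} argument.
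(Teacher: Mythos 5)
Your overall strategy for (a) — peel one exponential off the subgaussian bound and control the remaining sum over the fibre — is the paper's strategy, and your part (b) is fine granted (a) (indeed, by keeping the prefactor $2^{-Md_f}$ and noting that $u^{d_f-d_w/(d_w-1)}{\rm e}^{-\tilde c_4 u^{d_w/(d_w-1)}}$ is bounded for $u\geq 1$, your version of (b) is even a little cleaner than the paper's, which discards $2^{-Md_f}$ and then has to invoke \eqref{eq:log} through a double-sum estimate). The gap is in the counting and summation step of (a), and it is a real one in the regime $2^M\leq t^{1/d_w}$. First, the assertion that the number of fibre points with $d(x,y')\in[2^Mk,2^M(k+1))$ is $O(k)$ is not justified by calling $\pi_M^{-1}(y)$ a "lattice": the fibre lives on the gasket, and the only annulus-volume estimate available (Lemma \ref{lem:kolnierzyk}) is for balls centered at $0$ with dyadic radii. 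The natural and easily justified count is the \emph{cumulative} one: $\pi_M^{-1}(y)\cap(\cG_{M+k+1}\setminus\cG_{M+k})$ has at most $2\cdot 3^k$ elements, because that set is a union of $2\cdot 3^k$ triangles of size $2^M$; this is what the paper uses.

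Second, and more seriously, even granting $N_k=O(k)$ points per linear shell, the step "estimate the sum by its largest term times a constant" is only valid when $a:=2^M/t^{1/d_w}\geq 1$. When $a<1$ the terms of $\sum_k k\,{\rm e}^{-\frac{c_4}{2}(ak)^{d_w/(d_w-1)}}$ do not decay until $k\gtrsim 1/a$, so the sum is of order $a^{-2}$, and together with $t^{-d_s/2}=2^{-Md_f}a^{d_f}$ your bound becomes $C\,2^{-Md_f}a^{d_f-2}$, which blows up as $t\to\infty$ because $d_f=\log 3/\log 2<2$. The right-hand side of \eqref{eq:gauss-pom} in that regime is $C\,2^{-Md_f}{\rm e}^{-\tilde c_4}$, so your argument does not reach it, and consequently neither the $2^{-Md_f}$ half of \eqref{eq:gmbound} nor the $u<5^M$ piece of your integral in (b) is actually covered for large $u$-values close to $2^{Md_w}$... more precisely, the uniform-in-$t$ bound $O(2^{-Md_f})$ that you rely on in both places is exactly what fails. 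The repair is to exploit the cumulative count rather than a per-shell bound: with $2\cdot 3^k$ points in the $k$-th dyadic shell at distance $\geq \frac12 2^{M+k}$ from $x$, the sum is dominated by $c\,2^{-Md_f}\int_{2^Mt^{-1/d_w}}^\infty y^{d_f-1}{\rm e}^{-\tilde c_4 y^{d_w/(d_w-1)}}{\rm d}y$, and the elementary tail estimate $\int_a^\infty y^\beta{\rm e}^{-\eta y^\gamma}{\rm d}y\leq C(a\vee 1)^{\beta-\gamma+1}{\rm e}^{-\eta(a\vee 1)^\gamma}$ then produces precisely the exponent $d_f-\frac{d_w}{d_w-1}$ and the correct behaviour for $a<1$. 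This is the route taken in the paper, and it is the missing ingredient in your write-up.
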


\begin{proof}
Let $x, y \in \cG_M$. All the points $y'$ from the sums below are taken from the fiber of $y.$

\smallskip

(a)
One can write
\[\sum_{y'\notin\mathcal G_{M+1}} g(t,x,y')=\sum_{k\geq 1}\; \sum_{y'\in \mathcal G_{M+k+1}\setminus \mathcal G_{M+k}} p(t,x,y').\]
When $y'\in \mathcal G_{M+k+1}\setminus \mathcal G_{M+k},$ then $d(y',0)\geq 2^{M+k},$ and since $d(x,0)\leq 2^M,$ we have $d(y',x)\geq \frac{1}{2}2^{M+k}.$ The number of such points $y'$ is not bigger than the number of $M-$triangles in $ \mathcal G_{M+k+1}\setminus \mathcal G_{M+k},$ i.e. $2\cdot  {3^{k}}.$ Therefore from the subgaussian estimate on $g$ we get:
\[ \sum_{y'\notin\mathcal G_{M+1}} g(t,x,y') \leq \frac{2c_3}{ {2^{Md_f}}t^{d_s/2}}\sum_{k\geq 1} 3^{M+k}
{\rm e}^{-c_4\left(\frac{2^{M+k}}{2t^{1/d_w}}\right)^{\frac{d_w}{d_w-1}}}=(*).\]
Since for any $C,\gamma>0$ the function $x\mapsto {\rm e}^{-Cx^\gamma}$ is monotone decreasing, we can estimate the series above by an appropriate integral, getting that
\[(*) \leq \frac{\tilde c_3}{ {2^{Md_f}}t^{d_s/2}} \int_{3^M}^\infty {\rm e}^{-\tilde c_4\left(\frac{x^{1/d_f}}{t^{1/d_w}}\right)^{\frac{d_w}{d_w-1}}}\,{\rm d}x= \frac{\tilde c_3 d_f }{ {2^{Md_f}}}\int_{2^Mt^{-\frac{1}{d_w}}}^\infty y^{d_f-1} {\rm e}^{-\tilde c_4y^{\frac{d_w}{d_w-1}}}{\rm d} y,\]
where we have denoted $\tilde c_4=\frac{c_4}{2^{\frac{d_w}{d_w-1}}}.$
Using an elementary estimate
\begin{eqnarray*}\int_a^\infty y^\beta {\rm e}^{-\eta y^\gamma}{\rm d} y\leq C (a\vee 1)^{\beta-\gamma+1}
{\rm e}^{-\eta (a\vee 1)^\gamma}, \quad \eta,\beta,\gamma >0,
\end{eqnarray*}
we can write
\[\sum_{y'\notin \mathcal G_{M+1}} g(t,x,y') \leq C  {2^{-Md_f}} \left(\frac{2^M}{t^{1/d_w}}\vee 1\right)^{d_f-\frac{d_w}{d_w-1}}{\rm e}^{-\tilde c_4 \left(\frac{2^M}{t^{1/d_w}}\vee 1\right)^{\frac{d_w}{d_w-1}}},\]
with certain numerical constant $C>0,$ getting (\ref{eq:gauss-pom}).

To see \eqref{eq:gmbound}, first observe that, given $y\in\cG_M,$ there are at most 3 points
in $\pi^{-1}_M(y)\cap\cG_{M+1},$ so that, using \eqref{eq:gaussian} and \eqref{eq:gauss-pom} we get
\[\sup_{x,y\in\cG_M}g^M(t,x,y)\leq \frac{4c_3}{t^{d_s/2}}+  C  {2^{-Md_f}} \left(\frac{2^M}{t^{1/d_w}}\vee 1\right)^{d_f-\frac{d_w}{d_w-1}}{\rm e}^{-\tilde c_4 \left(\frac{2^M}{t^{1/d_w}}\vee 1\right)^{\frac{d_w}{d_w-1}}}.\]
When $t\geq 5^M,$ then this is bounded by  {$\frac{4c_3}{2^{Md_f}}+ \frac{C{\rm e}^{-\tilde c_4}}{2^{Md_f}}=:\frac{c}{2^{Md_f}}$}.
On the other hand, for $t\leq 5^M,$  denoting $a=\frac{2^M}{t^{1/d_w}}\geq 1$ we observe
\[a^{d_f-\frac{d_w}{d_w-1}}{\rm e}^{-\tilde c_t a^{\frac{d_w}{d_w-1}}}\leq a^{d_f}{\rm e}^{-\tilde c_4}= {\rm e}^{-\tilde c_4} \,\frac{ {2^{Md_f}}}{t^{d_f/d_w}}.\] Once we note that $d_f/d_w=d_s/2,$ this is exactly what is needed to get \eqref{eq:gmbound}.
\medskip

(b)
We now integrate the bound (\ref{eq:gauss-pom}) against the distribution of $S_t,$ getting
\begin{eqnarray}\label{eq:esti-pm}
\sum_{y'\notin \mathcal G_{M+1}} p(t,x,y') &\leq & C\int_0^{5^M}\left(\frac{2^M}{u^{1/d_w}}\right)^{d_f-\frac{d_w}{d_w-1}}{\rm e}^{-\tilde c_4 \left(\frac{2^M}{u^{1/d_w}}\right)^{\frac{d_w}{d_w-1}}}\eta_t({\rm d}u)
\\
&& + C\int_{5^M}^\infty {\rm e}^{-\tilde c_4}\eta_t({\rm d}u)\nonumber \\
&=:& A(M,t)+C \ {\eta_t(5^M,\infty)}=:C(M,t).\nonumber
\end{eqnarray}
In view of \eqref{eq:log} (see Remark \ref{rem:rem1} (1)), to get $\sum_M C(M,t)<\infty,$ it is enough to check that $\sum_MA(M,t)<\infty.$ To shorten the notation, write $\gamma=d_f-\frac{d_w}{d_w-1}.$
We have
\begin{eqnarray*}
\sum_{M=1}^\infty A_M(t) &\leq & \sum_{M=1}^\infty \int_0^{5^M} \left(\frac{2^M}{u^{1/d_w}}\right)^\gamma
{\rm e}^{-\tilde c_4 \left(\frac{2^M}{u^{1/d_w}}\right)^{\frac{d_w}{d_w-1}}}\eta_t({\rm d}u)\\
&=& \sum_{M=1}^\infty\sum_{n=1}^\infty \int_{5^{M-n-1}}^{5^{M-n}} \left(\frac{2^M}{u^{1/d_w}}\right)^\gamma
{\rm e}^{-\tilde c_4 \left(\frac{2^M}{u^{1/d_w}}\right)^{\frac{d_w}{d_w-1}}}\eta_t({\rm d}u)\\
&\leq & C \sum_M\sum_n 2^{n\gamma}{\rm e}^{-\tilde c_4 2^{\frac{nd_w}{d_w-1}}} {\eta_t(5^{M-n-1},5^{M-n}]}
\\
&=& \sum_n 2^{n\gamma}{\rm e}^{-\tilde c_4 2^{\frac{nd_w}{d_w-1}}}{\eta_t(5^{-n},\infty)} <\infty,
\end{eqnarray*}
which completes the proof.

\end{proof}

Thanks to the second part of the assumption \eqref{eq:assum1}, \eqref{eq:proj-sub} and \eqref{eq:gmbound}, the kernel $p^M(t,x,y)$ has the same continuity properties as $p(t,x,y)$. Also,
$$\sup_{x,y \in \cG} p^M(t,x,y) \leq c(M,t) < \infty.$$
Furthermore, it is easy to see that for each fixed $M \in \mathbb{Z}_+$ the process $X^M$ is Feller and strong Feller, and, therefore, again, we may and do consider the corresponding bridge process (see \cite{bib:ChUB}). Similarly as before, by $\pr^{M,t}_{x,y}$, $M \in \mathbb{Z}_+$, $t>0$, $x,y \in \cG_M$, we denote the bridge measure of the process $(X^M_s)_{0 \leq s \leq t}$ on $D([0,t], \cG_M)$, satisfying the usual bridge property: for every $0 \leq s < t$ and $A \in \sigma(X^M_s: 0 \leq u \leq s)$ we have
\begin{align}\label{eq:bridge-neum}
p^M(t,x,y) \pr^{M,t}_{x,y}(A) = \ex_x\left[\1_A p^M(t-s,X_s,y)\right].
\end{align}
{Similarly as for the process $X$ (see \eqref{eq:extbridge}), thanks to the Feller property of $X^M$, \eqref{eq:bridge-neum} extends to $s=t$ after performing the integration with respect to $m({\rm d} y)$. }Moreover, the bridge measures for the reflected and the ordinary subordinate process are related through the
following identity.

\begin{lemma}
\label{lm:rotation}
(a) For every $t>0$, $x,y \in \cG\setminus \mathcal V_M$, $M \in \Z$ and the set $A \in \cB(D[0,t],\cG_M)$ we have

\begin{equation}\label{eq:rot1}
p^M(t,\pi_M(x),\pi_M(y))\mathbf P^{M,t}_{\pi_M(x),\pi_M(y)}[A]=
\sum_{y'\in\pi_M^{-1}(\pi_M(y))} p(t,x,y')\mathbf
P^t_{x,y'}[\pi_M^{-1}(A)]\end{equation}

\medskip

(b) Consequently, for any $i=1,2,3$ and $x\in\cG_M,$
\begin{align*}
\sum_{x^{'} \in \pi^{-1}_{M}(x) } p(t,\pi^{-1}_{M,i}(x),x^{\prime})   \ex^t_{\pi^{-1}_{M,i}(x),x^{'}}\left[\pi^{-1}_M(A) \right]
= \sum_{x^{'} \in \pi^{-1}_{M}(x) } p(t,x,x^{\prime})  \ex^t_{x,x^{'}}\left[\pi^{-1}_M(A)\right].
\end{align*}
\end{lemma}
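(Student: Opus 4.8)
The plan is to first establish part (a) directly from the bridge property \eqref{eq:bridge-neum}, the definition of $p^M$, and the way $\pi_M$ acts on paths, and then to deduce part (b) as a corollary by unwinding the fibers. For part (a), I would fix $t>0$ and $x,y\in\cG\setminus\cV_M$, write $\bar x=\pi_M(x)$, $\bar y=\pi_M(y)$, and verify the identity first on a generating family of cylinder sets in $\cB(D([0,t],\cG_M))$ — sets of the form $A=\{\gamma: \gamma(s_1)\in B_1,\dots,\gamma(s_k)\in B_k\}$ for $0\le s_1<\dots<s_k\le t$ and Borel $B_i\subset\cG_M$ — and then extend to all of $\cB(D([0,t],\cG_M))$ by a monotone class / Dynkin argument, using that both sides are finite measures in $A$ (after dividing by the normalizing constants). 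The key observation is that $\pi_M^{-1}(A)$ is the corresponding cylinder set for the original process $X$ in $\cG$, so that the finite-dimensional distributions of $X^M$ under $\pr_x$ are exactly the pushforwards under $\pi_M$ of those of $X$ — this is precisely what \eqref{eq:proj-sub} and the summation definition of $p^M$ encode at the level of the transition kernel, and it iterates across finitely many time points by the Markov property.

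Concretely, for a cylinder set the left-hand side of \eqref{eq:rot1} unwinds, via repeated use of \eqref{eq:bridge-neum} at the times $s_1<\dots<s_k$, into an integral of products of $p^M$-kernels against indicators of the $B_i$, with a final factor $p^M(t-s_k,\cdot,\bar y)$; substituting the summation formula for $p^M$ at each kernel and at the final factor, and noting that $\sum_{y'\in\pi_M^{-1}(\bar y)}$ commutes with all the intermediate integrations over $\cG_M$ (which can be rewritten as integrations over a fundamental domain, each point of $\cG_M\setminus\cV_M$ having exactly one preimage in $\cG^{(i)}_M$, or equivalently by pulling the $\cG_M$-integrals back to $\cG$-integrals over the relevant fibers), reproduces exactly $\sum_{y'\in\pi_M^{-1}(\bar y)} p(t,x,y')\,\pr^t_{x,y'}[\pi_M^{-1}(A)]$ on the right. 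The care needed here is purely bookkeeping: matching each intermediate $\cG_M$-integral against the Hausdorff measure $m$ with the corresponding collapse of a fiber on the $\cG$ side, and handling the measure-zero exceptional set $\cV_M$ (which is harmless since $m(\cV_M)=0$ and the kernels are continuous). The vertex-weight "$2$" appearing in the definition of $p^M$ on $\cV_M\setminus\{0\}$ is exactly what makes $\pi_M$ measure-preserving for $m$ and hence makes the two sides agree; this is the one place where one must not be cavalier.

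For part (b): fix $i\in\{1,2,3\}$ and $x\in\cG_M$. Apply part (a) with an arbitrary point $z\in\cG^{(i)}_M\setminus\cV_{M+1}$ in place of $x$, i.e. with $\pi_{M,i}^{-1}(x)=z$ where $x=\pi_M(z)$, and with $y$ chosen so that $\pi_M(y)=x$ as well; the left-hand side of \eqref{eq:rot1} then reads $p^M(t,x,x)\,\pr^{M,t}_{x,x}[A]$, which is independent of the choice of representative $i$, while the right-hand side is $\sum_{x'\in\pi_M^{-1}(x)} p(t,\pi_{M,i}^{-1}(x),x')\,\pr^t_{\pi_{M,i}^{-1}(x),x'}[\pi_M^{-1}(A)]$. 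Taking $i$ and then the "canonical" representative in $\cG_M$ itself (the case where the base point already lies in $\cG_M$, so $\pi_{M,i}^{-1}$ is replaced by the identity) and equating gives the claimed identity. The main obstacle in the whole proof is part (a)'s reduction step — verifying that the pushforward-of-finite-dimensional-distributions identity really does follow cleanly from the kernel-level formula \eqref{eq:proj-sub} together with the bridge relation, including the correct treatment of the doubling on $\cV_M$ and the null set $\cV_M$ — rather than any analytic estimate; once that is in place, both (a) on general Borel $A$ and (b) are formal consequences.
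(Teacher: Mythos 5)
Your overall strategy (verify \eqref{eq:rot1} on cylinder sets, extend by a monotone class argument, then deduce (b) by choosing representatives in the fibers) is the same as the paper's, and your treatment of part (b) is fine. However, part (a) has a genuine gap: you dismiss the cylinder-set computation as ``purely bookkeeping,'' but the chaining step fails without the key invariance identity
\[
\sum_{z'\in \pi_M^{-1}(z)} p(t,x,z')\;=\;\sum_{z'\in \pi_M^{-1}(z)} p(t,\tilde x,z')\qquad\text{whenever }\pi_M(x)=\pi_M(\tilde x),
\]
which the paper obtains by integrating the corresponding identity \eqref{eq:sgum} for the Brownian kernel $g$ (a nontrivial symmetry of the gasket heat kernel, quoted from Theorem 3 of \cite{bib:KPP-PTRF}) against $\eta_t$. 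Concretely: when you expand $\prod_j p^M(s_{j+1}-s_j,\bar z_j,\bar z_{j+1})$ using the summation definition of $p^M$, each factor is a fiber sum $\sum_{z'_{j+1}}p(s_{j+1}-s_j,\bar z_j,z'_{j+1})$ whose \emph{first} argument is the projected point $\bar z_j\in\cG_M$, whereas to reassemble the right-hand side of \eqref{eq:rot1} you need the sum over full paths $\sum_{z'_1,\dots,z'_k}\prod_j p(s_{j+1}-s_j,z'_j,z'_{j+1})$, in which the first argument runs over the actual fiber points visited by $X$. These agree only because the fiber sum is invariant under moving the first argument within its fiber. Neither \eqref{eq:proj-sub} (which is just the subordination formula) nor the summation definition of $p^M$ (which only defines $p^M(t,\bar x,\cdot)$ for $\bar x\in\cG_M$) gives you this; it is exactly the statement that $\pi_M(X)$ is Markov with kernel $p^M$, which is what you are implicitly assuming rather than proving.

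Note also that even the degenerate case $A=D([0,t],\cG_M)$ of \eqref{eq:rot1} already reads $p^M(t,\pi_M(x),\pi_M(y))=\sum_{y'\in\pi_M^{-1}(\pi_M(y))}p(t,x,y')$ for an \emph{arbitrary} $x\in\cG\setminus\cV_M$ (not necessarily in $\cG_M$), which is precisely the invariance identity above; so the lemma cannot be reduced to bookkeeping. To repair your argument, state and invoke \eqref{eq:sgum} (or its analogue for $p$) explicitly before performing the cylinder-set computation.
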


\begin{proof} (a) This is a consequence of
Theorem 3  in \cite{bib:KPP-PTRF}. For the Brownian density one has
\begin{equation}\label{eq:sgum}
\sum_{z'\in \pi_M^{-1}(z)}g(t,x,z')=
\sum_{z'\in \pi_M^{-1}(z)}g(t,y,z'),
\end{equation}
whenever $z\in\mathcal G_M$ and $x,y\in\mathcal G$ are such points that $\pi_M(x)=\pi_M(y)$
(the proof in \cite{bib:KPP-PTRF} is carried for $M=0$ only; the general case is similar).
Integrating this against $\eta_t$  we get identical property for $p(\cdot,\cdot,\cdot).$
Now, as it was done in \cite[Lemma 8]{bib:KPP-PTRF}, it is enough to check the relation (\ref{eq:rot1}) for cylindrical sets only. This is straightforward using property (\ref{eq:sgum}) for $p.$
To get (b) just observe that $\pi_M(\pi_{M,i}^{-1}(x))=x$ and then use (\ref{eq:rot1}).
\end{proof}

\noindent
In the sequel, we will  need the following trace type property.
\begin{lemma}\label{lm:ergodic}
For every $t>0$,
\begin{equation}\label{eq:sum}
\sum_{M=1}^\infty
\frac{1}{m(\mathcal G_M)} \int_{\mathcal G_M} \left|p(t,x, x) - p^M(t, x, x)\right|  m({\rm d}x) < \infty.
\end{equation}
In particular,
\begin{align}
\label{eq:onedim3}
\frac{1}{m(\mathcal G_M)} \int_{\mathcal G_M} \left|p(t,x, x) - p^M(t, x, x)\right|  m({\rm d}x) \to 0, \quad \text{as} \quad M \to \infty.
\end{align}
\end{lemma}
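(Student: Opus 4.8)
The plan is to compare $p(t,x,x)$ with $p^M(t,x,x)$ using the explicit formula defining $p^M$. For $x\in\cG_M$, at most $3$ points of $\pi_M^{-1}(x)$ lie in $\cG_{M+1}$, and for $x$ outside the (measure zero) vertex set $\cV_M$ exactly one of these is $x$ itself; thus for $m$-a.e. $x\in\cG_M$ one has
\[
p^M(t,x,x) - p(t,x,x) = \underbrace{\sum_{\substack{x'\in\pi_M^{-1}(x)\\ x'\in\cG_{M+1},\, x'\neq x}} p(t,x,x')}_{\text{finitely many ``near'' terms}} \;+\; \underbrace{\sum_{\substack{x'\in\pi_M^{-1}(x)\\ x'\notin\cG_{M+1}}} p(t,x,x')}_{\leq\, C(M,t)\ \text{by Lemma \ref{lm:series}(b)}}.
\]
The second group is dominated by $C(M,t)$ uniformly in $x$, and $\sum_M C(M,t)<\infty$, so that part is done. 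First I would therefore reduce the whole claim to showing that
\[
\sum_{M=1}^\infty \frac{1}{m(\cG_M)}\int_{\cG_M}\!\!\sum_{\substack{x'\in\pi_M^{-1}(x)\cap\cG_{M+1}\\ x'\neq x}} p(t,x,x')\, m({\rm d}x) < \infty.
\]

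The key point for the near terms is that, although $p(t,x,x')$ for $x'$ a neighbour-copy of $x$ need not be small pointwise, such pairs $(x,x')$ are both pinned to the boundary of $\cG_M$: if $x\in\cG_M$ and $x'\in\pi_M^{-1}(x)\cap\cG_{M+1}$ with $x'\neq x$, then both $x$ and $x'$ lie in the union of the two other copies $\cG_M^{(2)},\cG_M^{(3)}$ glued to $\cG_M$ inside $\cG_{M+1}$, forcing $x$ to lie near $\cV_M$ — more precisely within a collar of $\cG_M$ around its corner vertices (or, after the rescaling $\cG_M=2^M\cG_0$, within Euclidean distance $O(1)$ of $\cV^{(0)}$ rescaled, i.e. within a $\cG_0$-sized neighbourhood of the three corners $2^M\cV^{(0)}$). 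Hence the integrand is supported on a set of $m$-measure $O(1)$ (a fixed finite number of unit triangles near each of the three corners), independently of $M$. On that set I bound $p(t,x,x')\le \sup p(t,\cdot,\cdot)\le c_3c_0(t)<\infty$ by \eqref{eq:sup-of-g}. Since $m(\cG_M)=3^M$, each term of the series is $O(3^{-M})$, which is summable. Actually a cleaner route: use symmetry and Lemma \ref{lm:series}(a)/(b)-type reasoning with the roles reversed — estimate $\sum_{x'\neq x,\,x'\in\cG_{M+1}} p(t,x,x')$ by a $C(M',t)$-type quantity with $M'$ the size of the copies (here the copies have size $2^M$, so this does not decay), so the measure-of-support argument is the one that actually supplies the summability.

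The main obstacle I expect is making precise the geometric claim that the ``bad'' set — the set of $x\in\cG_M$ for which $\pi_M^{-1}(x)$ meets $\cG_{M+1}$ in a point other than $x$ — is contained in a collar of $\cG_M$ of $m$-measure bounded uniformly in $M$. This requires unwinding the label/projection construction of Section \ref{sec:gasket}: $\pi_M$ identifies the three copies $\cG_M^{(i)}$ via their labelled vertices, and two distinct preimages of the same point can only occur for points sitting in $\cG_M^{(2)}\cup\cG_M^{(3)}$ whose image under the gluing map also lands in $\cG_M^{(1)}=\cG_M$; tracking which points these are shows they cluster near the shared vertices of the three big copies, hence near $\cV_M$. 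Once this is established the estimate is immediate. An alternative, avoiding the geometry, is to invoke Lemma \ref{lem:meanexittime}/continuity of the densities to show the near-term contribution tends to $0$, but to get \emph{summability} (not just convergence to $0$) the $O(3^{-M})$ bound from the bounded-measure support seems essential; so I would prioritise the geometric lemma and then close the argument by combining it with $\sum_M C(M,t)<\infty$ from Lemma \ref{lm:series}(b).
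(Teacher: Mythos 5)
Your reduction of the problem to the ``near'' terms (preimages of $x$ lying in $\cG_{M+1}$) plus the ``far'' terms (preimages outside $\cG_{M+1}$, controlled by $C(M,t)$ from Lemma \ref{lm:series}(b)) is exactly the decomposition the paper uses, and the far part is handled correctly. The gap is in your treatment of the near terms. The geometric claim on which your whole estimate rests --- that the set of $x\in\cG_M$ whose fiber $\pi_M^{-1}(x)$ meets $\cG_{M+1}$ in a point other than $x$ is a collar of $m$-measure $O(1)$ --- is false. The projection $\pi_M$ restricted to each of the copies $\cG_M^{(2)}$, $\cG_M^{(3)}$ is a bijection onto $\cG_M$, so for \emph{every} $x\in\cG_M\setminus\cV_M$ the set $\pi_M^{-1}(x)\cap\cG_{M+1}$ contains exactly two points $x'\neq x$, one in each of the other two copies. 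The ``bad'' set is therefore all of $\cG_M$ up to a null set, of measure $3^M$, and the claimed $O(3^{-M})$ bound per term does not follow; bounding $p(t,x,x')$ by $c_3c_0(t)$ over this set only yields $O(1)$.

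What is true, and what the paper actually exploits, is that the \emph{value} $p(t,x,x')$ is small for most $x$: the copies $\cG_M^{(2)},\cG_M^{(3)}$ are attached to $\cG_M$ only at corner vertices, so if $x$ stays away from the outer boundary of $\cG_M$, then $d(x,x')$ is large for every $x'\notin\cG_M$ in the fiber. Concretely, the paper splits $\cG_M$ into $E_M^1=B\left(0,2^M(1-M^{-2})\right)$ and its complement: on the complement, whose relative measure is $O(M^{-2(d_f-1)})$ and hence summable by Lemma \ref{lem:kolnierzyk} (since $2(d_f-1)>1$), it uses the crude bound $p\le c_3c_0(t)$; on $E_M^1$ one has $d(x,x')>2^M/M^2$ for all relevant $x'$, and the subgaussian bound for $g$ must then be integrated against $\eta_t({\rm d}u)$ and summed over $M$, which requires a nontrivial integral-comparison argument using both parts of Assumption \ref{ass:ass1}. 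This quantitative heat-kernel decay step is the real content of the lemma and is absent from your proposal; the support-of-measure shortcut cannot replace it.
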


\begin{proof} We can write, with all the points $x'$ taken from the fiber of $x,$
\begin{eqnarray*}
\frac{1}{m(\mathcal G_M)}\int_{\mathcal G_M}|p^M(t,x,x)-p(t,x,x)|\,m({\rm d}x) &\leq&
\frac{2}{m(\mathcal G_M)}\int_{\mathcal G_M} \sup_{x'\in\mathcal G_{M+1}\setminus\mathcal G_M} p(t,x,x')\,m({\rm d}x)\\
&&+ \frac{1}{m(\mathcal G_M)}\int_{\mathcal G_M}\sum_{x'\notin \mathcal G_{M+1}}p(t,x,x')\,m({\rm d}x)\\
&=:& \mathcal A_M+\mathcal B_M.
\end{eqnarray*}
We have $\mathcal B_M\leq C(M,t)$ (recall $C(M,t)$ is defined in \eqref{eq:summ}), which is a term of a convergent series. Therefore it remains to prove that $\sum \mathcal A_M<\infty.$
We split the integral in $\mathcal A_M$ into two integrals: over $E_M^1=B(0, {2^M}\left(1-\frac{1}{M^2}\right))$ and over $E_M^2=\mathcal G_{M}\setminus E_M^1.$ Using (\ref{eq:kolnierzyk}), we can write
\[\frac{m(E_M^2)}{m(\mathcal G_M)}\leq \frac{c}{M^{2(d_f-1)}},\]  and since $p(t,x,x')\leq c_0(t),$  and $2(d_f-1)>1,$ the part of the expression corresponding to the integral over $E^2_M$ is a term
of a convergent series.

When $x\in E_1^M,$ then for all $x'\notin \mathcal G_M$ one has $d(x,x')> \frac{2^M}{M^2}.$ From the subgaussian estimates and the subordination formula one gets
\begin{eqnarray*}
p(t,x,x')& = & \int_0^\infty g(u,x,x')\eta_t({\rm d}u)\leq c_3\int_0^\infty\frac{1}{u^{d_s/2}}\,{\rm e}^{-c_4\left(\frac{2^M}{M^2\cdot u^{1/d_w}}\right)^{\frac{d_w}{d_w-1}}}\eta_t({\rm d}u)
\end{eqnarray*}
It follows
\[\frac{1}{m(\mathcal G_M)}\int_{E^1_M} \sup_{x'\in\mathcal G_{M+1}\setminus \mathcal G_M} p(t,x,x')\,m({\rm d}x) \leq c_3\int_0^\infty \frac{1}{u^{d_s/2}}\, {\rm e}^{-c_4\left(\frac{2^M}{M^2\cdot u^{1/d_w}}\right)^{\frac{d_w}{d_w-1}}}\eta_t({\rm d}u).
\]
Summing these integrals up, we obtain that
\[\sum_{M\geq 1} \mathcal A_M \leq \sum_{M\geq 1}\frac{c}{M^2}+ c_3\int_0^\infty \frac{1}{u^{d_s/2}} \sum_{M\geq 0} {\rm e}^{-c_4\left(\frac{2^{M+1}}{(M+1)^2u^{1/d_w}}\right)^{\frac{d_w}{d_w-1}}}\eta_t({\rm d}u).
\]
We now take care of the sum under the integral sign. Again, we compare it with appropriate integrals. Observe that for $M\geq 3$
\begin{eqnarray*}
\int_{2^M/M^2}^{2^{(M+1)}/(M+1)^2} \frac{1}{x}\,{\rm e}^{-c_4\left(\frac{x}{u^{1/d_w}}\right)^{\frac{d_w}{d_w-1}}}{\rm d}x& \geq &
\left(\frac{2^{M+1}}{(M+1)^2}-\frac{2^M}{M^2}\right)\frac{(M+1)^2}{2^{M+1}} \,
{\rm e}^{-c_4\left(\frac{2^{M+1}}{(M+1)^2\cdot u^{1/d_w}}\right)^{\frac{d_w}{d_w-1}}}\\
&\geq & c  \,
{\rm e}^{- c_4\left(\frac{2^{M+1}}{(M+1)^2\cdot u^{1/d_w}}\right)^{\frac{d_w}{d_w-1}}}.
\end{eqnarray*}
It follows that
\begin{eqnarray*}\sum_{M\geq 0}  \,
{\rm e}^{-c_4\left(\frac{2^{M+1}}{(M+1)^2\cdot u^{1/d_w}}\right)^{\frac{d_w}{d_w-1}}}&\leq& 3+c \int_{8/9}^\infty {\rm e}^{-c_4 \left(\frac{x}{u^{1/d_w}}\right)^{\frac{d_w}{d_w-1}}}\frac{{\rm d}x}{x},
\end{eqnarray*}
and, consequently,
\[\sum_{M\geq 1}\mathcal A_M\leq c\left(1+ \int_0^\infty \frac{\eta_t({\rm d}u)}{u^{d_s/2}} + \int_0^\infty\left[ \int_{8/9}^\infty {\rm e}^{-c_4 \left(\frac{x}{u^{1/d_w}}\right)^{\frac{d_w}{d_w-1}}}\frac{{\rm d}x}{x}\right]\frac{\eta_t({\rm d}u)}{u^{d_s/2}}\right).
\]
According to Assumption \ref{ass:ass1} (see \eqref{eq:assum1}), it suffices to show that the last double integral is convergent.
By Fubini, we see that it is equal to

\begin{align*}
\int_{8/9}^\infty \left[\left(\int_0^x + \int_x^{\infty} \right)  {\rm e}^{-c_4 \left(\frac{x}{u^{1/d_w}}\right)^{\frac{d_w}{d_w-1}}}\frac{\eta_t({\rm d}u)}{u^{d_s/2}}\right]\frac{{\rm d}x}{x}
\leq \int_{8/9}^\infty {\rm e}^{-c_4 x} \frac{{\rm d}x}{x} \ \cdot \ \int_0^{\infty} \frac{\eta_t({\rm d}u)}{u^{d_s/2}} + c \int_{8/9}^\infty \eta_t(x,\infty) \frac{{\rm d}x}{x}.
\end{align*}
Again, by Assumption \ref{ass:ass1}, the last two integrals above are convergent. We are done.

\end{proof}

\subsection{Processes perturbed by Schr\"odinger potentials}

\subsubsection{Nonrandom Feynman-Kac semigroups}

We say that a Borel function $V$ is in Kato class $\cK^X$ related to the process $X$ if
\begin{align}
\label{eq:Kato}
\lim_{t \searrow 0} \sup_{x \in \cG} \int_0^t \ex_x |V(X_s)|\,{\rm d}s = 0.
\end{align}
Also, $V \in \cK^X_{\loc}$ (local Kato class), when $\1_B V \in \cK^X$ for every ball $B \subset \cG$. {Obviously, $L^{\infty}_{\loc}(\cG) \subset \cK^X_{\loc}$. Furthermore, it is a general fact that $\cK_{\loc}^X \subset L^1_{\loc}(\cG,m)$. It is also useful to note that under the condition $\int_0^1 c_0(t) {\rm d}t < \infty$ (recall that the constant $c_0(t)$ was defined in $\eqref{eq:assum1}$), in fact one has $\cK_{\loc}^X = L^1_{\loc}(\cG,m)$. Indeed, for $V \in L^1_{\loc}(\cG,m)$ and an arbitrary bounded Borel set $B \subset \cG$ we get, using the subordination formula (\ref{eq:subord}), estimate (\ref{eq:gaussian}), and the assumption on $c_0$:
$$
\sup_{x \in \cG} \int_0^t \ex_x |V(X_s)\1_B(X_s)|{\rm d}s \leq c \int_0^t \int_0^{\infty} u^{-d_s/2} \eta_s({\rm d}u) {\rm d}s \cdot \int_B |V(y)| m({\rm d}y) \leq c_1 \int_0^t c_0(s) {\rm d}s\to 0
$$
as $t \to 0^+$. Hence $V \in  \cK^X_{\loc}$. }

Under our conditions on the process $X$, formula \eqref{eq:Kato} can be rewritten as (see \cite[Theorem 1]{bib:Z})
\begin{align}
\label{eq:Kato1}
\lim_{\varepsilon \searrow 0} \sup_{x \in \cG} \ex_x \int_0^{\tau_{B(x,\varepsilon)}} |V(X_s)|{\rm d}s = 0.
\end{align}
The condition \eqref{eq:Kato1} is always very useful. For instance, when $V \in \cK^X_{\loc}$ then by using \eqref{eq:Kato1} one can easily show that for every $M \in \Z$ we also have $V_M \in{\cK^X}$, where $V_M$ is the usual periodization of $V$, i.e. $V_M(x):=V(\pi_M(x))$, $x \in \cG$.

Under the condition $V \in \cK^X_{\loc}$, we may define the Feynman-Kac semigroups related to the killed and the reflected process in $\cG_M$, $M \in \Z$. Let
\begin{align}
\label{def:sem-dir}
T_t^{D,M} f(x) = \ex_x \left[{\rm e}^{-\int_0^t V(X_s) {\rm d}s} f(X_t); t<\tau_{\cG_M}\right], \quad f \in L^2(\cG_M,m), \quad M \in \Z, \quad t>0,
\end{align}
and
\begin{align}
\label{def:sem-neu}
T_t^{N,M} f(x) = \ex^M_x \left[{\rm e}^{-\int_0^t V(X^M_s) {\rm d}s} f(X^M_t)\right], \quad f \in L^2(\cG_M,m), \quad M \in \Z, \quad t>0.
\end{align}
It is not very difficult to check that both semigroups $(T_t^{D,M})$ and $(T_t^{N,M})$ are ultracontractive. Since for every $t>0$, both operators $T_t^{D,M}$ and $T_t^{N,M}$ are also symmetric and bounded on $L^2(\cG_M)$, they admit measurable, symmetric and bounded kernels (see \cite[Theorem A.1.1 and Corollary A.1.2]{bib:S}), i.e.
\begin{align}
\label{def:sem-dir-kernel}
T_t^{D,M} f(x) = \int_{\cG_M} u^M_D(t,x,y) f(y) m({\rm d}y), \quad f \in L^2(\cG_M,m), \quad M \in \Z, \quad t>0,
\end{align}
\begin{align}
\label{def:sem-neu-kernel}
T_t^{N,M} f(x) = \int_{\cG_M} u^M_N(t,x,y) f(y) m({\rm d}y), \quad f \in L^2(\cG_M,m), \quad M \in \Z, \quad t>0.
\end{align}
{For verification of all basic properties of Feynman-Kac semigroups for Markov processes, including those listed above, we refer the reader to \cite[Sections 3.2 and 3.3]{bib:CZ}}.

{By \eqref{eq:extbridge} and its counterpart for the process $X^M$}, one obtains the following very useful bridge representations for the kernels. We have
\begin{align}
\label{def:sem-dir-kernel-bridge}
u^M_D(t,x,y) = p(t,x,y) \ \ex^t_{x,y}\left[{\rm e}^{-\int_0^t V(X_s) {\rm d}s} ; t<\tau_{\cG_M}\right],  \quad M \in \Z, \quad x,y \in \cG_M, \quad t>0,
\end{align}
\begin{align}
\label{def:sem-neu-kernel-bridge}
u^M_N(t,x,y) = p^M(t,x,y) \ \ex^{M,t}_{x,y} \left[{\rm e}^{-\int_0^t V(X^M_s) {\rm d}s}\right],  \quad M \in \Z, \quad x,y \in \cG_M, \quad t>0.
\end{align}

To shorten the notation, we will write $e_V(t):={\rm e}^{-\int_0^t V(\cdot){\rm d}s}$, $t>0$, for the Feynman-Kac functionals for both processes $X$ and $X^M$.

{Generators of semigroups $(T_t^{D,M})$ and $(T^{N,M}_t)$ will be denoted by $A^{M,D}$ and $A^{M,N}$, respectively. By analogy to the classical case, the operators $-A^{M,D}$ and $-A^{M,N}$ will be called the \emph{generalized Schr\"odinger operators} corresponding to the generator of the process $X$. The first one,  $-A^{M,D}$, is related to the killed process and therefore, in fact, it is a Schr\"odinger operator based on the generator of the process $X$ with Dirichlet (outer) conditions. Similarly, $-A^{M,N}$ may be seen as the Schr\"odinger operator based on the `Neumann' generator of this process. Indeed, the process $X^M$  appears via the subordination of the reflected Brownian motion (recall \eqref{eq:proj-sub}) and it can be regarded as a jump counterpart of the process `reflected on exiting the set $\cG_M$'. Let us emphasize, however, that when the process has discontinuous trajectories, then there is no
canonical definition of the `reflected process'  and there are several possible ways of constructing it. }

{As we pointed out above, for every $t>0$ and $M\in\mathbb Z_+$, both kernels $u^M_D(t,x,y)$ and $u^M_N(t,x,y)$ are bounded. Since also $m(\cG_M)<\infty$ for all $M \in \mathbb Z_+$, all operators $T_t^{D,M}$ and $T^{N,M}_t$ are of Hilbert-Schmidt type. Therefore, the spectra of the related Schr\"odinger operators, $-A^{M,D}$ and $-A^{M,N}$, consist only of eigenvalues of finite multiplicity having no accumulation points, and they can be ordered as $\lambda_1^{D,M}\leq \lambda_2^{D,M}\leq ...$
and $\lambda_1^{N,M}\leq \lambda_2^{N,M}\leq ...$. It is known that the corresponding eigenfunctions $(\phi^{D,M}_n)_{n\geq 1}$ (resp. $(\phi^{N,M}_n)_{n\geq 1}$) form a complete orthonormal system in $L^2(\cG,m)$. }

\medskip

\subsubsection{Random potentials.}
{In the sequel, we will consider a more general case, when the potential $V$ is not a deterministic function.
Let $(\Omega,\mathcal M, \qpr)$ be a probability space and $V(x,\omega)$ -- a real-valued function
on $\cG \times \Omega$ such that $V(x,\cdot)$ is measurable for each fixed $x \in \cG$ and
$V(\cdot,\omega) \in \cK^X_{\loc}$ for $\qpr$-almost all $\omega \in \Omega$.
Such a function $V$ is called a \emph{random potential} or a \emph{random field}.}

{For a random potential $V(\cdot, \omega)$ which $\mathbb Q$-a.s. belongs to the local Kato class $\mathcal K^X_{\rm loc}$, we consider
 random Feynman-Kac semigroups $(T^{D,M,\omega}_t)_{t\geq 0}$ and
$(T^{N,M,\omega}_t)_{t\geq 0},$  given by
\eqref{def:sem-dir}-\eqref{def:sem-neu}. Both semigroups consist of Hilbert-Schmidt operators, the totality of eigenvalues of the corresponding generalized random Schr\"odinger operators $-A^{D,M,\omega}$ and $-A^{N,M,\omega}$ can be ordered as $\lambda_1^{D,M}(\omega)\leq \lambda_2^{D,M}(\omega)\leq ...$ and $\lambda_1^{N,M}(\omega)\leq \lambda_2^{N,M}(\omega)\leq ...$, respectively. }

{The basic objects we consider are} the random empirical measures on $\mathbb R_+$ based on these spectra, normalized by the volume of $\cG_M$:
\begin{equation}\label{eq:el-d}
l_M^D(\omega):= \frac{1}{m(\cG_M)} \sum_{n=1}^{\infty}
\delta_{\lambda_n^{D,M}(\omega)}
\end{equation}
and
\begin{equation}\label{eq:el-n}
l_M^N(\omega):= \frac{1}{m(\cG_M)} \sum_{n=1}^{\infty}
\delta_{\lambda_n^{N,M}(\omega)}.
\end{equation}
In this paper we are interested in the convergence of these measures as $M\to\infty.$

Our main results are obtained for the restricted class of Poissonian potentials which are defined below. Let
\begin{align}
\label{eq:poiss0} V(x,\omega):= \int_{\cG} W(x,y)
\mu^{\omega}({\rm d}y),
\end{align}
where $\mu^\omega$ is the random counting measure corresponding to
the Poisson point process on $\cG$,  with intensity $\nu {\rm d}m$,  $\nu>0,$ defined on a
probability space $(\Omega,\mathcal M, \mathbb{Q})$),  and $W:\cG \times \cG \to \R$ is a
measurable, nonnegative profile function.  Throughout the paper we assume that the Poisson process and the Markov process $X$ are independent.

Now we list and discuss regularity {assumptions} concerning the profile function $W$.
\bigskip

\begin{itemize}
\item[\textbf{(W1)}]
$W\geq 0$, $W(\cdot,y) \in \cK_{\loc}^X$ for every $y \in \cG$ and there exists a function $h \in L^1(\cG,m)$ such that $W(x,y) \leq h(y)$, whenever $d(y,0)\geq 2d(x,0)$.
\end{itemize}

\bigskip

\noindent {Our final Theorems  \ref{th:meanlimit} and \ref{th:main-fractal}, addressing the problem of convergence of measures $l_M^D(\omega)$, $l^{N}_M(\omega)$ for Poissonian potentials of the form \eqref{eq:poiss0}}, require additional assumptions

\bigskip

\begin{itemize}
\item[\textbf{(W2)}] $\sum_{M=1}^{\infty} \sup_{x \in \cG} \int_{B(x,2^{M/4})^c} W(x,y) dm(y) < \infty$
\end{itemize}

\bigskip

and

\bigskip

\begin{itemize}
\item[\textbf{(W3)}] there is $M_0 \in \Z$ such that
\begin{equation}\label{eq:wu}
\sum_{y{'} \in \pi_M^{-1}(\pi_M(y))} W(\pi_M(x),y{'}) \leq \sum_{y{'}
\in \pi_M^{-1}(\pi_M(y))} W(\pi_{M+1}(x),y{'}), \quad x, y \in \cG,
\end{equation}
for every $M \in \Z$, $M \geq M_0$.
\end{itemize}

\bigskip

The following proposition asserts that under the condition {\bf (W1)} the function $V(\cdot,\omega)$ given by \eqref{eq:poiss0} is a well defined, locally Kato-class potential for almost all $\omega \in \Omega$.

\begin{proposition} \label{prop:Wkato} Let $V$ be a Poissonian potential whose profile $W$ satisfies {\bf (W1)}. Then $0 \leq V(\cdot,\omega) \in \cK_{\loc}^X$ for $\qpr$-almost all $\omega \in \Omega$.
\end{proposition}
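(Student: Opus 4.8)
The plan is to reduce the assertion to two elementary facts — that a finite sum of $\cK^X_{\loc}$ functions is again in $\cK^X_{\loc}$, and that a function which is bounded on a ball and vanishes outside it belongs to $\cK^X$ — and then to split the Poissonian potential into a near part and a far part to which these facts apply separately. Nonnegativity of $V(\cdot,\omega)$ is immediate from $W \geq 0$ and $\mu^\omega \geq 0$, so the only thing to prove is the local Kato property, together with finiteness of $V(\cdot,\omega)$ along the way. Since every ball of $\cG$ is contained in some $\cG_N = B(0,2^N)$ and $\cK^X$ is a vector space, it suffices to produce, for each fixed $N \in \N$, a set $\Omega_N$ of full $\qpr$-measure on which $V(\cdot,\omega)$ is finite on $\cG_N$ and $\1_{\cG_N} V(\cdot,\omega) \in \cK^X$; the null set to be excluded in the proposition is then $\Omega \setminus \bigcap_N \Omega_N$.

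Fix $N$ and decompose $V = V_1 + V_2$ according to $\cG = \cG_{N+1} \cup \cG_{N+1}^c$, i.e. $V_1(x,\omega) = \int_{\cG_{N+1}} W(x,y)\,\mu^\omega({\rm d}y)$ and $V_2(x,\omega) = \int_{\cG_{N+1}^c} W(x,y)\,\mu^\omega({\rm d}y)$. Because $m(\cG_{N+1}) < \infty$, for $\qpr$-almost every $\omega$ the measure $\mu^\omega$ restricted to $\cG_{N+1}$ is a finite sum of atoms $\delta_{y_1(\omega)}, \dots, \delta_{y_{n(\omega)}(\omega)}$, so $V_1(\cdot,\omega) = \sum_{i=1}^{n(\omega)} W(\cdot,y_i(\omega))$ is a finite sum. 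By \textbf{(W1)}, each summand $W(\cdot,y_i(\omega))$ lies in $\cK^X_{\loc}$, hence so does $V_1(\cdot,\omega)$; in particular $\1_{\cG_N} V_1(\cdot,\omega) \in \cK^X$.

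For the far part I would invoke the domination in \textbf{(W1)} quantitatively: if $x \in \cG_N$ and $y \in \cG_{N+1}^c$, then $d(y,0) \geq 2^{N+1} \geq 2\,d(x,0)$, so $W(x,y) \leq h(y)$, and therefore $\1_{\cG_N}(x)\,V_2(x,\omega) \leq C(\omega) := \int_{\cG} h(y)\,\mu^\omega({\rm d}y)$ for every $x$. By Campbell's formula for the Poisson process with intensity $\nu\, {\rm d}m$ one has $\qex\, C = \nu \int_{\cG} h\, {\rm d}m < \infty$ because $h \in L^1(\cG,m)$, so $C(\omega) < \infty$ for $\qpr$-almost every $\omega$; off that null set $\1_{\cG_N} V_2(\cdot,\omega)$ is a function bounded by $C(\omega)$ and supported in $\cG_N$, hence trivially in $\cK^X$ since $\sup_{x \in \cG} \int_0^t \ex_x |f(X_s)|\,{\rm d}s \leq \|f\|_\infty\, t \to 0$ as $t \to 0^+$, regardless of $X$. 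On the intersection of the two full-measure sets just obtained we get $V(\cdot,\omega)$ finite on $\cG_N$ and $\1_{\cG_N} V(\cdot,\omega) = \1_{\cG_N} V_1(\cdot,\omega) + \1_{\cG_N} V_2(\cdot,\omega) \in \cK^X$; intersecting over $N$ finishes the argument.

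The single point that requires care is the control of the long-range tail of the potential — the aggregate effect of the Poisson atoms sitting far away from $\cG_N$ — and this is precisely what the bound $W(x,y) \leq h(y)$ for $d(y,0) \geq 2\,d(x,0)$ in \textbf{(W1)} is designed to supply; the complementary observation that this tail needs no smallness in $t$ (boundedness alone puts a function into $\cK^X$) is what keeps the whole proof short. The remaining measurability issues — Borel measurability of $x \mapsto V(x,\omega)$ and a measurable labelling of the atoms $y_i(\omega)$ — are standard for Poisson point processes and I would dispose of them with a single remark.
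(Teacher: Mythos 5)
Your proof is correct and follows essentially the same route as the paper's: the same near/far decomposition of the Poisson cloud, with the near atoms handled as a finite sum of Kato-class profiles and the far atoms dominated by $h$ via the geometric condition in \textbf{(W1)} and summed to an a.s.\ finite constant. The only (harmless) difference is that you verify the tail term directly from the definition \eqref{eq:Kato} using the trivial bound $\|f\|_\infty\, t$, whereas the paper routes everything through the exit-time formulation \eqref{eq:Kato1} and Lemma \ref{lem:meanexittime}.
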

\begin{proof}
First observe that thanks to \eqref{eq:Kato1}, it is enough to show that for every $R>0$ there is a measurable $\Omega_R \subset \Omega$ with $\qpr(\Omega_R)=1$ such that
\begin{align}\label{eq:toshow}
\lim_{\varepsilon \searrow 0} \sup_{x \in \cG} \ex_x \int_0^{\tau_{B(x,\varepsilon)}} |V_R(X_s,\omega)|{\rm d}s = 0,
\end{align}
for every $\omega \in \Omega_R$, where $V_R:=V \1_{B(0,R)}$. Since
$$
\mathbb{E_Q} \int_{\cG} h(y) \mu^{\omega}({\rm d}y) = \int_{\cG} h(y) m({\rm d}y) < \infty,
$$
there exists a measurable set $\Omega_1 \subset \Omega$ with $\qpr(\Omega_1)=1$ such that $h \in L^1(\cG,\mu_{\omega})$ for every $\omega \in \Omega_1$. Also, let
$$
\Omega_2 : = \left\{\omega \in \Omega: \text{finitely many Poisson points fell onto} \ B(0,2R) \right\}.
$$
By the definition of the cloud, $\qpr({\Omega_2})=1$, so $\Omega_R = \Omega_1 \cap \Omega_2$ is of full measure. We will prove that for every $\omega \in \Omega_R$ the condition \eqref{eq:toshow} holds.

For given $\omega \in \Omega_R$, denote by $\{y_i(\omega)\}_i$  the
realization of the cloud. Then for every $x \in \cG$ we have
$$
|V_R(x,\omega)| \leq \1_{B(0,R)}(x) \left(\sum_{y_i(\omega) \in B(0,2R)}  W(x,y_i(\omega)) + \int_{B(0,2R)^c} h(y) \mu^{\omega}({\rm d}y)\right),
$$
and, in consequence,
\begin{align*}
\ex_x \int_0^{\tau_{B(x,\varepsilon)}} |V_R(X_s,\omega)|{\rm d}s & \leq \sum_{y_i(\omega) \in B(0,2R)} \ex_x \int_0^{\tau_{B(x,\varepsilon)}}  \left[W(X_s,y_i(\omega))\1_{B(0,R+1)}(X_s)\right]{\rm d}s \\ & \ \ \ \ \ \ \ \ \ \ \ \ \ \ \ \ + \ \ex_x \tau_{B(x,\varepsilon)} \cdot \int_{B(0,2R)^c} h(y) \mu^{\omega}({\rm d}y),
\end{align*}
where the  sums above are taken over all Poisson points that fell onto $B(0,2R)$ (there is a finite number of them).  Since  $W(\cdot,y_i(\omega)) \in \cK_{\loc}^X$ for all $y_i(\omega)$, and $\int_{B(0,2R)^c} h(y) \mu^{\omega}({\rm d}y) < \infty$ for each $\omega \in \Omega_R$, we obtain by \eqref{eq:Kato1} (applied to $W$) and Lemma \ref{lem:meanexittime} that
\begin{align*}
\lim_{\varepsilon \searrow 0} \sup_{x \in \cG} & \,\ex_x \int_0^{\tau_{B(x,\varepsilon)}} |V_R(X_s,\omega)|{\rm d}s \\ &  \leq \sum_{y_i(\omega) \in B(0,2R)} \lim_{\varepsilon \searrow 0} \sup_{x \in \cG} \ex_x \int_0^{\tau_{B(x,\varepsilon)}} \left[ W(X_s,y_i(\omega))\1_{B(0,R+1)}(X_s)\right]{\rm d}s \\ & \ \ \ \ \ \ \ \ \ \ \ \ \ \ \ + \ \lim_{\varepsilon \searrow 0} \sup_{x \in \cG} \ex_x \tau_{B(x,\varepsilon)} \cdot \int_{B(0,2R)^c} h(y) \mu^{\omega}({\rm d}y) \ = \ 0.
\end{align*}
{Hence the proof of the proposition is complete.}
\end{proof}
\noindent
 {Note that the condition $\int_{\cG}W(\cdot,y) m({\rm d}y) \in L^1_{\loc}(\cG,m)$ immediately implies that $V(\cdot,\omega) \in L^1_{\loc}(\cG,m)$, $\qpr$-almost surely. By using this implication, one can give another sufficient condition for the property $V(\cdot,\omega) \in \cK_{\loc}^X$, $\qpr$-almost surely. Indeed, as we pointed out in the previous subsection, if $\int_0^1 c_0(t) dt < \infty$ holds, then $L^1_{\loc}(\cG,m) = \cK^X_{\loc}$. Therefore, under the assumption $\int_0^1 c_0(t) dt < \infty$, the condition
$$\int_{\cG}W(\cdot,y) m({\rm d}y) \in L^1_{\loc}(\cG,m) \quad \text{is  sufficient for} \quad V(\cdot,\omega) \in \cK_{\loc}^X, \ \qpr \text{-a.s.} .$$
}

\smallskip

The condition {\bf (W3)} involves the specific geometry of the gasket and is essentially different from remaining conditions {\bf (W1)} -- {\bf (W2)} which are analytic. As we will see, it will be decisive for our convergence problem. Examples of profile functions $W$ satisfying all our assumptions {\bf (W1)} -- {\bf (W3)} will be discussed in Section \ref{sec:ex}.

We finish this section by giving the following exponential formula. For every measurable and nonnegative function $f$ on $\cG$ it holds that
\begin{align}
\label{eq:underQ}
\ex_{\qpr} \left[{\rm e}^{-\int_{\cG} f(y) \mu^{\omega}({\rm d}y)}\right] = {\rm e}^{-\nu \int_{\cG}\left(1-{\rm e}^{-f(y)} \right) m({\rm d}y)}.
\end{align}
This formula will be a very important tool below (for its Euclidean counterpart we refer the reader to \cite[p. 433]{bib:Okura81}).
In particular, it yields the
representation for the averaged Feyman-Kac functional for the Poissonian potential $V$ with nonnegative profile $W$. Indeed, by taking $f(y)= \int_0^t W(X_s,y) ds$, for every $x \in \R^d$ and $\pr_x$-almost all $w$, we have
$$
\ex_{\qpr} \left[{\rm e}^{-\int_0^t V(X_s(w), \omega) {\rm d}s }\right] = {\rm e}^{-\nu \int_{\cG}\left(1-{\rm e}^{-\int_0^t W(X_s(w),y) {\rm d}s}\right) m({\rm d}y)}.
$$
 {For the reader's convenience, we give a short justification of \eqref{eq:underQ}. Suppose first that $f(x)=c \1_{A}(x)$ for some Borel set $A \subset \cG$ and $c>0$. Recall that $\mu^{\omega}(A)$ is a Poisson random variable with parameter $\nu m(A)$ on a probability space $(\Omega, \mathcal M, \qpr)$. With this we have
$$
\ex_{\qpr} \left[{\rm e}^{-\int_{\cG} f(y) \mu^{\omega}({\rm d}y)}\right] = \ex_{\qpr} \left[{\rm e}^{-c\mu^{\omega}(A)}\right] = {\rm e}^{-\nu m(A)\left(1-{\rm e}^{-c} \right)} = {\rm e}^{-\nu \int_{\cG}\left(1-{\rm e}^{-f(y)} \right) m({\rm d}y)}.
$$
Since for a family of pairwise disjoint Borel sets $A_1, ..., A_n \subset \cG$ the random variables $\mu^{\omega}(A_1)$, ..., $\mu^{\omega}(A_n)$ are independent, the above formula directly extends to  simple functions and then, by a standard approximation argument, it also holds for nonnegative measurable functions.
}

\section{Convergence}\label{sec:conv}

Our goal is to establish that the random measures $l_M^D(\omega)$
and $l^{N}_M(\omega),$ defined by (\ref{eq:el-d}), (\ref{eq:el-n}),  vaguely converge to a common limit $l,$
 which is a nonrandom measure on $\mathbb R_+.$  This measure is called the
 {\em integrated density of states}.

 We shall consider the Laplace transforms of $l_M^D(\omega),$ $l_M^N(\omega):$

\begin{eqnarray*}
L_M^D(t,\omega)&:=& \int_0^{\infty} {\rm e}^{-\lambda t} {\rm d}l_M^D(\omega)(t) =
\frac{1}{m(\cG_M)} \sum_{n=1}^{\infty} {\rm e}^{-\lambda_n^{D,M}(\omega) t} =
\frac{1}{m(\cG_M)} \tr T_t^{D,M,\omega}
\\
&=& \frac{1}{m(\cG_M)} \int_{\cG_M} p(t,x,x) \ex^t_{x,x}\left[e_{V}(t); t < \tau_{\cG_M} \right] m({\rm d} x)
\end{eqnarray*}
and
\begin{eqnarray*}
L_M^N(t,\omega)&:=& \int_0^{\infty} {\rm e}^{-\lambda t} {\rm d}l_M^N(\omega)(t) =
\frac{1}{m(\cG_M)} \sum_{n=1}^{\infty} {\rm e}^{-\lambda_n^{N,M}(\omega) t} =
\frac{1}{m(\cG_M)} \tr T_t^{N,M,\omega}\\
&=& \frac{1}{m(\cG_M)} \int_{\cG_M}
p^M(t,x,x) \ex^{M,t}_{x,x}\left[e_{V}(t)\right] m({\rm d}x).
\end{eqnarray*}

   First we show that the expectations
$\mathbb{E}_\qpr L_M^D(t,\omega)$ and $\mathbb
E_{\qpr}L_M^N(t,\omega)$ converge for every $t$ to a common limit $L(t).$

\subsection{Common limit of points of $L^N_M (t,\omega)$ and $L^D_M(t,\omega)$}
Our first result asserts that when the nonnegative (general, not necessarily Poissonian) random field $V$ belongs
to $\cK^X_{\loc}$, then $L^N_M (t,\omega)$ and $L^D_M(t,\omega)$ share the limit points in $L^2(\Omega,\qpr).$ In fact, we show more than that.

\begin{proposition}
\label{prop:dir_neum}
Let $0 \leq V(\cdot,\omega) \in \cK^X_{\loc}$ for $\qpr$-almost all $\omega$. Then for every $t>0$, $L^N_M (t,\omega)$
and $L^D_M(t,\omega)$ 
satisfy
$$
\sum_{M=1}^\infty
\mathbb{E}_{\qpr} \left(L^N_M (t,\omega) - L^D_M(t,\omega)\right)^2 <\infty.
$$
In particular,
\[\lim_{M\to\infty}\mathbb{E}_{\qpr}\left|L^N_M (t,\omega) - L^D_M(t,\omega)\right|^2=0.\]
\end{proposition}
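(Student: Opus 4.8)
\emph{Plan of proof.} The plan is to reduce the whole statement to a single \emph{deterministic} estimate: I will exhibit, for each $t>0$, numbers $\delta_M(t)\ge 0$ with $\sum_{M\ge 1}\delta_M(t)<\infty$ such that
\[
0\le L^N_M(t,\omega)-L^D_M(t,\omega)\le \delta_M(t)\qquad\text{for }\qpr\text{-a.a.\ }\omega\text{ and all }M.
\]
Once this is known, $(L^N_M-L^D_M)^2\le\delta_M(t)^2$ $\qpr$-a.s., so $\sum_M\mathbb{E}_{\qpr}(L^N_M-L^D_M)^2\le\big(\sup_M\delta_M(t)\big)\sum_M\delta_M(t)<\infty$ (the supremum is finite since $\delta_M(t)\to 0$), and the ``in particular'' part is then immediate.

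To build $\delta_M$, I first rewrite the Neumann trace in terms of the ordinary (non-reflected) bridges. Applying Lemma~\ref{lm:rotation}(a) with $x=y\in\cG_M\setminus\cV_M$ to the bounded path functional $w\mapsto e^{-\int_0^t V(w_s){\rm d}s}$ on $D([0,t],\cG_M)$ — whose pullback through $\pi_M$ is $e_{V_M}(t)$ with the periodization $V_M:=V\circ\pi_M$ — gives
\[
p^M(t,x,x)\,\ex^{M,t}_{x,x}\!\left[e_V(t)\right]=\sum_{x'\in\pi^{-1}_M(x)}p(t,x,x')\,\ex^{t}_{x,x'}\!\left[e_{V_M}(t)\right],\qquad x\in\cG_M\setminus\cV_M .
\]
On $\{t<\tau_{\cG_M}\}$ the $X$-bridge path stays inside the cell $\cG_M$, where $\pi_M$ acts as the identity, so $e_{V_M}(t)=e_V(t)$ there; hence the $x'=x$ summand equals $u^M_D(t,x,x)+p(t,x,x)\,\ex^t_{x,x}[e_{V_M}(t);\tau_{\cG_M}\le t]$. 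Because $V\ge 0$ we have $0\le e_{V_M}(t)\le 1$, so integrating over $x\in\cG_M$ and using $\sum_{x'\in\pi^{-1}_M(x)}p(t,x,x')=p^M(t,x,x)$ ($m$-a.e.) together with $p^M\ge p$ on the diagonal yields $0\le L^N_M-L^D_M\le b_M(t)+c_M(t)$, where
\begin{gather*}
b_M(t):=\frac{1}{m(\cG_M)}\int_{\cG_M}p(t,x,x)\,\pr^t_{x,x}[\tau_{\cG_M}\le t]\,m({\rm d}x),\\
c_M(t):=\frac{1}{m(\cG_M)}\int_{\cG_M}\big(p^M(t,x,x)-p(t,x,x)\big)\,m({\rm d}x).
\end{gather*}
The integrand defining $c_M$ is $|p^M-p|$, so $\sum_M c_M(t)<\infty$ by Lemma~\ref{lm:ergodic}; thus it only remains to prove $\sum_M b_M(t)<\infty$, and one sets $\delta_M(t):=b_M(t)+c_M(t)$.

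For $b_M(t)$ I first pass from the diagonal bridge to the free process. Splitting $\{\tau_{\cG_M}\le t\}$ into ``$X$ leaves $\cG_M$ during $[0,t/2]$'' and ``during $[t/2,t]$'', these two events have equal $\pr^t_{x,x}$-probability by the symmetry of the bridge measure, and the first lies in $\sigma(X_u:u\le t/2)$; applying~\eqref{eq:bridge} at $s=t/2$ and the bound $p(t/2,z,x)\le c_3c_0(t/2)$ gives $p(t,x,x)\,\pr^t_{x,x}[\tau_{\cG_M}\le t]\le 2c_3c_0(t/2)\,\pr_x[\tau_{\cG_M}\le t/2]$. Next, since $\cG_M=B(0,2^M)$, leaving $\cG_M$ from $x$ forces the path to reach geodesic distance at least $r_x:=2^M-d(x,0)$ from $x$, so $\pr_x[\tau_{\cG_M}\le t/2]\le\pr_x[\sup_{s\le t/2}d(X_s,x)\ge r_x]$, which the estimate used in the proof of Lemma~\ref{lm:supr_sum} bounds by $c_1 e^{-c_2 r_x^{d_w/(2(d_w-1))}}+c_1\eta_{t/2}(r_x^{d_w/2},\infty)$. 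Finally I split $\cG_M=E^1_M\cup E^2_M$ with $E^1_M=B\big(0,2^M(1-M^{-2})\big)$, exactly as in the proof of Lemma~\ref{lm:ergodic}: on $E^2_M$ the integrand is at most $p(t,x,x)\le c_3c_0(t)$ while $m(E^2_M)/m(\cG_M)\le cM^{-2(d_f-1)}$ by Lemma~\ref{lem:kolnierzyk}, and $\sum_M M^{-2(d_f-1)}<\infty$ since $2(d_f-1)>1$; on $E^1_M$ one has $r_x\ge 2^M M^{-2}$, the term $\sum_M e^{-c_2(2^M M^{-2})^{d_w/(2(d_w-1))}}$ converges by super-polynomial decay, and $\sum_M\eta_{t/2}\big((2^M M^{-2})^{d_w/2},\infty\big)<\infty$ because $(2^M M^{-2})^{d_w/2}\ge (2^{d_w/4})^M$ for all large $M$, so it is dominated by $\sum_M\eta_{t/2}(a^M,\infty)$ with $a=2^{d_w/4}>1$, which is finite by Remark~\ref{rem:rem1}(1) — this is exactly where~\eqref{eq:log} is used. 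Hence $\sum_M b_M(t)<\infty$, completing the argument.

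The main obstacle is this estimate for $b_M(t)$: one must show that the $m(\cG_M)$-normalised probability that the diagonal $X$-bridge escapes the cell $\cG_M$ is summable in $M$. That is where the geometry of the gasket enters, through the collar bound of Lemma~\ref{lem:kolnierzyk} and the inequality $2(d_f-1)>1$ (which is what makes the near-boundary contribution summable), and where the logarithmic-moment assumption~\eqref{eq:log} on the subordinator is indispensable, to tame the heavy-tailed term $\eta_{t/2}(\cdot,\infty)$ coming from the subgaussian tail of $X$. The only slightly delicate preliminary step is the reduction from the conditioned bridge to a free exit probability, which combines the bridge identity~\eqref{eq:bridge} with the time-reversal symmetry of the bridge measure.
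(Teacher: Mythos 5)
Your proof is correct and follows essentially the same route as the paper's: the identical decomposition $0\le L^N_M-L^D_M\le R_{1,M}+R_{2,M}$ via Lemma \ref{lm:rotation}(a), disposal of the off-diagonal term by Lemma \ref{lm:ergodic}, and control of the bridge exit probability by time-reversal symmetry, the collar estimate of Lemma \ref{lem:kolnierzyk}, and the tail bound underlying Lemma \ref{lm:supr_sum}. The only (cosmetic) differences are your choice of collar width $2^M/M^2$ in place of the paper's $2^{M/2}$ and your squaring step $\delta_M^2\le(\sup_M\delta_M)\,\delta_M$ in place of the paper's $R_i^2\le c\,R_i$; both are equivalent.
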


\begin{proof}
First note that since $e_V(t) = e_{V_M}(t)$ on $\left\{t<\tau_{\cG_M}\right\}$ for $M \in \mathbb{Z}_+$, we have
\begin{equation}
\begin{split}
\label{eq:dir}
L^D_M (t,\omega) &= \frac{1}{m(\cG_M)} \int_{\cG_M} p(t,x,x) \ex^t_{x,x}\left[e_{V_M}(t); t < \tau_{\cG_M} \right] m({\rm d}x) \\
&= \frac{1}{m(\cG_M)} \int_{\cG_M} p(t,x,x) \ex^t_{x,x}\left[e_{V_M}(t)\right] m({\rm d}x)  \\ & \ \ \ \ \ \ -  \frac{1}{m(\cG_M)} \int_{\cG_M} p(t,x,x) \ex^t_{x,x}\left[e_{V_M}(t); t \geq \tau_{\cG_M} \right] m({\rm d}x).
\end{split}
\end{equation}
Simultaneously, by Lemma \ref{lm:rotation} (see \eqref{eq:rot1}), we  get
\begin{equation}
\begin{split}
\label{eq:neum} L^N_M (t,\omega) &= \frac{1}{m(\cG_M)} \int_{\cG_M}
p^M(t,x,x) \ex^{M,t}_{x,x}\left[e_{V}(t)\right] m({\rm d}x) \\ &=
\frac{1}{m(\cG_M)} \int_{\cG_M} p(t,x,x)
\ex^t_{x,x}\left[e_{V_M}(t)\right] m({\rm d}x) \\ & \ \ \ \ \ \ +
\frac{1}{m(\cG_M)} \int_{\cG_M} \sum_{x \neq x^{'} \in \pi^{-1}_M(x) }
p(t,x,x^{\prime}) \ex^t_{x,x^{'}}\left[e_{V_M}(t)\right] m({\rm d}x).
\end{split}
\end{equation}
We see that by \eqref{eq:dir}, \eqref{eq:neum} and the fact that $V \geq 0$, we get
$$
0 \leq L^N_M (t,\omega) - L^D_M (t,\omega) \leq  R_{1,M}(t) +  R_{2,M}(t),
$$
with
\begin{eqnarray}\label{eq:r-jeden}
R_{1,M}(t) &=& \frac{1}{m(\cG_M)} \int_{\cG_M} p(t,x,x) \pr^t_{x,x}\left[t \geq \tau_{\cG_M} \right] m({\rm d}x),
\\
\label{eq:r-dwa}
R_{2,M}(t) &= &\frac{1}{m(\cG_M)} \int_{\cG_M} \sum_{x \neq x^{'} \in \pi^{-1}_M(x) } p(t,x,x^{\prime}) m({\rm d}x).
\end{eqnarray}
(Note that these bounds do not depend on $\omega$). We can write, using (\ref{eq:sup-of-g})
and (\ref{eq:esti-pm}):
 \[(R_{1,M}(t) +  R_{2,M}(t))^2 \leq 2(R_{1,M}^2(t) +  R_{2,M}^2(t))\leq
 2c_3c_0(t) R_{1,M}(t)+ 2(c_3c_0(t))+C(M,t))R_{2,M}(t) \] for every $t>0$ and $M \in \mathbb{Z}_+.$ Since $C(M,t)$ remains bounded for $M\in \mathbb{Z}_+$ (in fact, it goes to zero as $M \to \infty$),  it is enough to check that for every fixed $t>0$ both members $R_{1,M}(t)$ and $R_{2,M}(t)$
 are terms of convergent series.

For the process starting from $x \in B(0,2^M-2^{M/2})$ one has
$\left\{t \geq \tau_{\cG_M}\right\} \subset \left\{\sup_{0< s \leq
t} d(x,X_s)
> 2^{M/2} \right\}.$ Moreover,
\begin{equation}\label{eq:zn}\left\{\sup_{0< s \leq
t} d(X_0,X_s)
> 2^{M/2} \right\}\subset \left\{\sup_{0< s \leq
t/2} d(X_0,X_s)
> 2^{M/2} \right\}\cup \left\{\sup_{t/2< s \leq
t} d(X_0,X_s)
> 2^{M/2} \right\},\end{equation}
and from Lemma \ref{lem:kolnierzyk} we have $m(\cG_M\setminus B(0,2^M-2^{M/2}))\leq c 2^{\frac{M}{2}(d_f+1)}$.
Using these facts, the bridge symmetry, and \eqref{eq:bridge}, we get
\begin{eqnarray*}
\int_{\cG_M} p(t,x,x)\pr^t_{x,x}\left[t \geq \tau_{\cG_M} \right]
 m({\rm d}x)&\leq& c(t) m(\cG_M\setminus B(0,2^M-2^{M/2}))\\
 && \ \ \ \ \ +\int_{B(0,2^M-2^{M/2})}p(t,x,x)\pr^t_{x,x}\left[t \geq \tau_{\cG_M} \right]
 m({\rm d}x)\\ &\leq & c(t) {2^{\frac{M}{2}\, (d_f+1)}}
 +2m(\cG_M) \sup_{x \in \cG}  \pr_x\left[\sup_{0< s \leq t/2} d(x,X_s) > 2^{M/2}\right] .
 \end{eqnarray*}
 Therefore
\begin{align*}
R_{1,M}(t) & \leq  c(t) 2^{{-\frac{M}{2}\,(d_f-1)}} + 2 \
\sup_{x \in \cG}  \pr_x\left[\sup_{0< s \leq t/2} d(x,X_s) >
2^{M/2}\right],
\end{align*}
and, by Lemma \ref{lm:supr_sum}, $R_{1,M}(t)$ is a term of a convergent series.

To estimate $R_{2,M}(t)$ it is enough to observe that
\begin{align*}
R_{2,M}(t) =\frac{1}{m(\cG_M)}\int_{\cG_M} (p^M(t,x,x)-p(t,x,x))m({\rm d}x).
\end{align*}
By Lemma \ref{lm:ergodic} (see \eqref{eq:sum}), this also is the term of a convergent series. This completes the proof.
\end{proof}

For more clarity we decided to prove the above proposition for $V \geq 0$ only. However, our argument can be directly modified to give  the same result in a much more general case.
The following remark asserts that Proposition \ref{prop:dir_neum} above remains true for a large class of signed random fields. Recall that $V_M(x)=V(\pi_M(x))$, $M \in \Z$, $x \in \cG$.

\begin{remark}
Assume that a random field $V=V^{+} - V^{-}$ (where $V^{+}$ and $V^{-}$ are, respectively, positive and negative parts of $V$) is such that $V^{+} \in \cK^X_{\loc}$, $V^{-} \in \cK^X$, for $\qpr$-almost all $\omega \in \Omega$, and for every $t>0$,
\begin{align}
\label{eq:M-condition}
\sup_{M \in \Z} \sup_{x \in \cG} \mathbb{E_Q} \times \ex_x \left[\exp\left(4 \int_0^t V^{-}_M(X_s,\omega) {\rm d}s\right) \right] < \infty.
\end{align}
Then the assertion of Proposition \ref{prop:dir_neum} also holds. Clearly, the condition \eqref{eq:M-condition} is satisfied if, e.g., $V^{-}$ is bounded above by the same finite constant for $\qpr$-almost all $\omega \in \Omega$.
\end{remark}

\subsection{Convergence of expectations of $L^N_M (t,\omega)$ and $L^D_M(t,\omega)$ for Poissonian potentials}

In this subsection we  restrict our attention to Poissonian
potentials, i.e.
\begin{align}
\label{eq:poiss} V(x,\omega)= \int_{\cG} W(x,y)
\mu^{\omega}({\rm d}y),
\end{align}
where $\mu^\omega$ is the random counting measure corresponding to
the Poisson point process on $\cG$ (defined on the
probability space $(\Omega,\mathcal M, \mathbb{Q})$), and $W$ is a
nonnegative profile function satisfying the basic condition \textbf{(W1)}.

Although we  know that the sequences $ \mathbb E_\qpr L_M^N(t,\omega)$ and $\mathbb E_\qpr L_M^N(t,\omega)$
share their limit points, we do not know for now that either of them is convergent.
To prove the desired convergence we introduce two auxiliary objects
$L^{N^{*}}_M (t,\omega)$, $L^{D^{*}}_M (t,\omega),$ prove that they have the same limit points as $L^{N}_M (t,\omega)$, $L^{D}_M (t,\omega)$, and finally that $L^{N^{*}}_M (t,\omega)$ converges to  a finite limit when $M\to\infty.$

\smallskip

We will need the following `periodization' of $V.$
 \begin{definition} \label{def:Wstar} The family of random fields $(V_M^{*})_{M
\in \Z}$ on $\cG$ given by
$$
V_M^{*}(x,\omega):= \int_{\cG_M} \sum_{y{'} \in \pi_M^{-1}(y)}
W(x,y{'}) \mu^{\omega}({\rm d}y), \quad M \in \Z,
$$ is called
the $M$-periodization   of $V$ \textit{in the Sznitman sense}.
\end{definition}
{The above definition strongly depends on the geometry of $\cG$. In fact, our periodization of the potential function $V$ is a gasket counterpart of that considered in \cite[page 202]{bib:Szn-book} in the Euclidean case. More recently, the periodization of the Poisson random measure was also used in \cite{bib:kpp-sausage} in proving the annealed asymptotics for the Wiener sausage on simple nested fractals.}
By exactly the same argument as in Proposition \ref{prop:Wkato},
one can  check that under the condition \textbf{(W1)}, $V_M^{*}(\cdot,\omega) \in \cK_{\loc}^X$, for every $M \in \Z$, for $\qpr$-almost all $\omega \in \Omega$. Recall that by $V_M$ we have  denoted the usual periodization of $V$,
i.e. $V_M(x,\omega)=V(\pi_M(x),\omega)$.


\smallskip

For $t>0$ and $M \in \Z$ we define:
$$
L^{D^{*}}_M (t,\omega) = \frac{1}{m(\cG_M)} \int_{\cG_M} p(t,x,x) \ex^t_{x,x}\left[e_{V_M^{*}}(t); t < \tau_{\cG_M} \right] m({\rm d}x)
$$
and
$$
L^{N^{*}}_M (t,\omega) = \frac{1}{m(\cG_M)} \int_{\cG_M} p^M(t,x,x)
\ex^{M,t}_{x,x}\left[e_{V_M^{*}}(t)\right] m({\rm d}x).
$$

\

\noindent Repeating the estimates from the proof of Proposition \ref{prop:dir_neum} for
$L^{D^{*}}_M (t,\omega)$ and $L^{N^{*}}_M (t,\omega)$ we get

\begin{corollary}
\label{cor:dir_neum2}
Let the profile $W$ satisfy \textbf{(W1)}. Then for every $t>0$ we have
$$
\mathbb{E}_{\qpr} \left(L^{N^{*}}_M (t,\omega) - L^{D^{*}}_M (t,\omega)\right) =  o(1) \quad \text{as} \quad M \to \infty.
$$
\end{corollary}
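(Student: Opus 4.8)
The plan is to rerun the proof of Proposition \ref{prop:dir_neum} almost word for word, with the random field $V$ replaced throughout by its periodization $V_M^{*}$, the only genuinely new ingredient being a small bookkeeping remark that is needed because $V_M^{*}(x,\omega)$ is \emph{not} invariant under $x\mapsto\pi_M(x)$ in its spatial argument. First I would note that, as already recorded after Definition \ref{def:Wstar} (by the same argument as in Proposition \ref{prop:Wkato}), $0\leq V_M^{*}(\cdot,\omega)\in\cK^X_{\loc}$ for $\qpr$-almost all $\omega$, and consequently the usual periodization $\widetilde V_M:=V_M^{*}\circ\pi_M$ of $V_M^{*}$ belongs to $\cK^X$ (this is the fact recorded just after \eqref{eq:Kato1}); hence all the Feynman--Kac functionals occurring below are well defined for almost all $\omega$. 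Since the reflected process satisfies $X_s^M=\pi_M(X_s)\in\cG_M$, we have $V_M^{*}(X_s^M,\omega)=V_M^{*}(\pi_M(X_s),\omega)=\widetilde V_M(X_s,\omega)$, so that $e_{V_M^{*}}(t)$ evaluated along $X^M$ equals $e_{\widetilde V_M}(t)$ evaluated along $X$, and the latter is a functional of the projected path $\pi_M(X)$ only. Therefore Lemma \ref{lm:rotation}(a), applied exactly as in the derivation of \eqref{eq:neum}, yields
\begin{align*}
L^{N^{*}}_M(t,\omega)&=\frac{1}{m(\cG_M)}\int_{\cG_M}p(t,x,x)\,\ex^t_{x,x}\!\left[e_{\widetilde V_M}(t)\right]m({\rm d}x)\\
&\quad+\frac{1}{m(\cG_M)}\int_{\cG_M}\sum_{x\neq x'\in\pi_M^{-1}(x)}p(t,x,x')\,\ex^t_{x,x'}\!\left[e_{\widetilde V_M}(t)\right]m({\rm d}x).
\end{align*}

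On the event $\{t<\tau_{\cG_M}\}$ the whole trajectory of $X$ stays in $\cG_M$, where $\pi_M$ is the identity, so there $e_{\widetilde V_M}(t)=e_{V_M^{*}}(t)$; hence the first integral above differs from the defining expression for $L^{D^{*}}_M(t,\omega)$ exactly by $\frac{1}{m(\cG_M)}\int_{\cG_M}p(t,x,x)\,\ex^t_{x,x}[e_{\widetilde V_M}(t);\,t\geq\tau_{\cG_M}]\,m({\rm d}x)$. Because $W\geq0$ forces $V_M^{*}\geq0$ and hence $e_{\widetilde V_M}(t)\leq1$, I would drop the Feynman--Kac factors from both surviving terms and arrive at
\begin{align*}
0\leq L^{N^{*}}_M(t,\omega)-L^{D^{*}}_M(t,\omega)\leq R_{1,M}(t)+R_{2,M}(t),
\end{align*}
with the \emph{same} deterministic quantities $R_{1,M}(t)$ and $R_{2,M}(t)$ as in \eqref{eq:r-jeden}--\eqref{eq:r-dwa}. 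Since this bound does not involve $\omega$, taking $\ex_{\qpr}$ is trivial, and $R_{1,M}(t)+R_{2,M}(t)\to0$ was already established in the proof of Proposition \ref{prop:dir_neum} (Lemma \ref{lm:supr_sum} handles $R_{1,M}$, and $R_{2,M}(t)=\frac{1}{m(\cG_M)}\int_{\cG_M}\bigl(p^M(t,x,x)-p(t,x,x)\bigr)\,m({\rm d}x)$ is dealt with by Lemma \ref{lm:ergodic}). This gives $\ex_{\qpr}\bigl(L^{N^{*}}_M(t,\omega)-L^{D^{*}}_M(t,\omega)\bigr)=o(1)$ as $M\to\infty$, as claimed.

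I do not expect any real obstacle here: all the required estimates are already in hand. The one point that requires attention — and the reason the corollary is stated separately rather than absorbed into Proposition \ref{prop:dir_neum} — is that one must apply Lemma \ref{lm:rotation} not to $V_M^{*}$ itself (which does not factor through $\pi_M$), but to its re-periodization $\widetilde V_M=V_M^{*}\circ\pi_M$, using that $\widetilde V_M$ and $V_M^{*}$ coincide along paths that have not yet exited $\cG_M$. Once this is observed, nonnegativity of $W$ and the already-proven decay of $R_{1,M}(t)+R_{2,M}(t)$ close the argument with no new computation.
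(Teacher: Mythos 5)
Your proof is correct and takes essentially the same approach as the paper: the paper's own proof of this corollary is a one-line instruction to repeat the estimates from Proposition \ref{prop:dir_neum}, which is exactly what you carry out, arriving at the same deterministic bound $0\leq L^{N^{*}}_M-L^{D^{*}}_M\leq R_{1,M}(t)+R_{2,M}(t)$. Your additional bookkeeping remark---that Lemma \ref{lm:rotation} must be applied to the spatial re-periodization $V_M^{*}\circ\pi_M$ rather than to $V_M^{*}$ itself, the two Feynman--Kac functionals agreeing on $\{t<\tau_{\cG_M}\}$---correctly supplies the one detail the paper leaves implicit.
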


Another auxiliary lemma relates the limits of
$\mathbb{E}_{\qpr}L^D_M (t,\omega)$ and
$\mathbb{E}_{\qpr}L^{D^*}_M (t,\omega)$  for Poissonian potentials $V$ with nonnegative
profiles $W$.

\begin{lemma}
\label{lm:dir2}
Let the profile $W$ satisfy conditions \textbf{(W1)}-\textbf{(W2)}. Then for every $t>0$ we have
$$
\mathbb{E}_{\qpr} \left(L^D_M (t,\omega) - L^{D^{*}}_M (t,\omega) \right) =  o(1) \quad \text{as} \quad M \to \infty.
$$
\end{lemma}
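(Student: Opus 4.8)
The plan is to average out the Poissonian randomness first, using the exponential formula \eqref{eq:underQ}, and then to estimate the difference of the two resulting deterministic bridge integrals. Fix $t>0$. For a trajectory $w$ of $X$ set
\[
\Phi(w):=\int_{\cG}\Bigl(1-e^{-\int_0^t W(X_s(w),y)\,{\rm d}s}\Bigr)\,m({\rm d}y),\qquad
\Phi_M^{*}(w):=\int_{\cG_M}\Bigl(1-e^{-\int_0^t\sum_{y'\in\pi_M^{-1}(y)}W(X_s(w),y')\,{\rm d}s}\Bigr)\,m({\rm d}y),
\]
which are $\pr_x$-a.s.\ finite by \textbf{(W1)}. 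Applying \eqref{eq:underQ} conditionally on the path of $X$ (once to $V$ with $f(y)=\int_0^t W(X_s,y)\,{\rm d}s$, once to $V_M^{*}$ with $f(y)=\1_{\cG_M}(y)\int_0^t\sum_{y'\in\pi_M^{-1}(y)}W(X_s,y')\,{\rm d}s$) and then Tonelli's theorem, one gets
\[
\mathbb{E}_{\qpr}L^D_M(t,\omega)=\frac{1}{m(\cG_M)}\int_{\cG_M}p(t,x,x)\,\ex^t_{x,x}\bigl[e^{-\nu\Phi};\,t<\tau_{\cG_M}\bigr]\,m({\rm d}x),
\]
and the same identity with $\Phi^{*}_M$ in place of $\Phi$ for $\mathbb{E}_{\qpr}L^{D^{*}}_M(t,\omega)$. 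Hence it suffices to prove that
\[
0\le\frac{1}{m(\cG_M)}\int_{\cG_M}p(t,x,x)\,\ex^t_{x,x}\bigl[\bigl(e^{-\nu\Phi^{*}_M}-e^{-\nu\Phi}\bigr);\,t<\tau_{\cG_M}\bigr]\,m({\rm d}x)\longrightarrow 0,\quad M\to\infty .
\]

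The heart of the matter is a pointwise bound on the nonnegative quantity $\Phi-\Phi^{*}_M$. Recall that each $M$-triangle of $\cG$ is carried onto $\cG_M$ by $\pi_M$ via an $m$-preserving bijection, and $\pi_M$ is the identity on $\cG_M$; this gives the disintegration $\int_{\cG}g\,{\rm d}m=\int_{\cG_M}\sum_{y'\in\pi_M^{-1}(y)}g(y')\,m({\rm d}y)$, in which the fibre point sitting in $\cG_M$ is $y$ itself. Combining this with the elementary inequality $0\le\sum_i(1-e^{-a_i})-(1-e^{-\sum_i a_i})\le\sum_{i\ne i_0}a_i$ (valid for any $a_i\ge0$ and any fixed index $i_0$), applied with $i_0$ the fibre point in $\cG_M$, one obtains
\[
0\le\Phi(w)-\Phi_M^{*}(w)\le\int_0^t\int_{\cG\setminus\cG_M}W(X_s(w),y')\,m({\rm d}y')\,{\rm d}s .
\]
Now the geometry of the gasket enters: $\cG_M=B(0,2^M)$ is joined to $\cG\setminus\cG_M$ only through its two vertices $2^M a_2$ and $2^M a_3$, both at geodesic distance $2^M$ from $0$, so any geodesic from $z\in\cG_M$ to $y'\in\cG\setminus\cG_M$ passes through one of these vertices and hence $d(z,y')\ge2^M-d(z,0)$. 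Consequently, on the event $\{t<\tau_{\cG_M}\}\cap\{\sup_{s\le t}d(X_s,x)\le2^{M/4}\}$ with $x\in B(0,2^M-2^{M/2})$ and $M$ large enough, every $X_s$ satisfies $d(X_s,0)\le2^M-2^{M/2}+2^{M/4}$, hence $d(X_s,y')\ge2^{M/2}-2^{M/4}\ge2^{M/4}$ for all $y'\in\cG\setminus\cG_M$; therefore $\int_{\cG\setminus\cG_M}W(X_s,y')\,m({\rm d}y')\le\delta_M:=\sup_{z\in\cG}\int_{B(z,2^{M/4})^c}W(z,y)\,m({\rm d}y)$, and $\delta_M\to0$ by \textbf{(W2)}. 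Thus $\Phi-\Phi_M^{*}\le t\delta_M$ on that event.

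To conclude I would split $\cG_M$ into $E_M^{1}:=B(0,2^M-2^{M/2})$ and $E_M^{2}:=\cG_M\setminus E_M^{1}$, and on $E_M^{1}$ split the bridge event $\{t<\tau_{\cG_M}\}$ according to whether $\sup_{s\le t}d(X_s,x)\le2^{M/4}$. Using $p(t,x,x)\le c_3 c_0(t)$ (see \eqref{eq:sup-of-g}) and the trivial bound $0\le e^{-\nu\Phi^{*}_M}-e^{-\nu\Phi}\le1$, the $E_M^{2}$-contribution is at most $c_3 c_0(t)\,m(E_M^{2})/m(\cG_M)$, which tends to $0$ by Lemma \ref{lem:kolnierzyk}. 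On the part of $E_M^{1}$ where $\sup_{s\le t}d(X_s,x)\le2^{M/4}$ one uses $e^{-\nu\Phi^{*}_M}-e^{-\nu\Phi}\le\nu(\Phi-\Phi^{*}_M)\le\nu t\delta_M$, giving a contribution $\le c_3 c_0(t)\,\nu t\,\delta_M\to0$. On the remaining part the integrand is at most $p(t,x,x)\,\pr^t_{x,x}[\sup_{s\le t}d(X_s,x)>2^{M/4}]$, and, exactly as in the proof of Proposition \ref{prop:dir_neum} (split $[0,t]$ into $[0,t/2]$ and $[t/2,t]$ as in \eqref{eq:zn}, apply \eqref{eq:bridge} with $s=t/2$ together with \eqref{eq:sup-of-g}, and invoke the symmetry of the bridge measure), it is bounded by $2c_3 c_0(t/2)\sup_{z\in\cG}\pr_z[\sup_{s\le t/2}d(z,X_s)>2^{M/4}]$, which after averaging in $x$ still tends to $0$ by Lemma \ref{lm:supr_sum} applied with $a=2^{1/4}$. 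Summing the three pieces gives $\mathbb{E}_{\qpr}(L^{D^{*}}_M(t,\omega)-L^D_M(t,\omega))\to0$, and hence the lemma.

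The step I expect to be the real obstacle is the pointwise bound on $\Phi-\Phi^{*}_M$: extracting a usable estimate from the analytic assumption \textbf{(W2)} forces one to control how close a path confined to $\cG_M$ and staying near a point deep inside $\cG_M$ can come to $\cG\setminus\cG_M$, which is precisely where the special connectivity of the Sierpi\'nski gasket — exit from $\cG_M$ only through two ``gateway'' vertices — is needed; the remaining three estimates are either trivial or verbatim repetitions of estimates already made in Proposition \ref{prop:dir_neum}.
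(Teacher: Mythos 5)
Your proof is correct and follows essentially the same route as the paper's: average out the Poisson randomness via \eqref{eq:underQ}, reduce the difference of exponentials to $\nu\int_0^t\int_{\cG\setminus\cG_M}W(X_s,y)\,m({\rm d}y)\,{\rm d}s$ by the elementary Lipschitz/subadditivity inequalities, discard a thin boundary layer of $\cG_M$ via Lemma \ref{lem:kolnierzyk}, confine the bridge path so that it stays at distance at least $2^{M/4}$ from $\cG_M^c$ (handling the exceptional event by Lemma \ref{lm:supr_sum}), and invoke \textbf{(W2)}. The only differences are cosmetic: you exploit the sign $\Phi_M^*\le\Phi$ and a one-step inequality where the paper bounds $|F_M|$ by two terms, and you localize via $\sup_{s\le t}d(X_s,x)\le 2^{M/4}$ rather than via $\tau_{B(0,2^M-2^{M/2})}$.
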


\begin{proof}
By \eqref{eq:underQ} for $V$ and a version of this equality for $V_M^*$, we can write
\begin{align*}
\left|\mathbb{E}_{\qpr} (L^D_M (t,\omega) -  L^{D^{*}}_M (t,\omega))\right|  \leq \frac{1}{m(\cG_M)}
\int_{\cG_M} p(t,x,x) \ex^t_{x,x}\left[\left|F_M(w,t)\right| ; t < \tau_{\cG_M} \right] m({\rm d}x),
\end{align*}
where
$$
F_M(w,t) = {\rm e}^{-\nu \int_{\cG}\left(1-{\rm e}^{-\int_0^t W(X_s(w),y) {\rm d}s}\right) m({\rm d}y)}  - {\rm e}^{-\nu \int_{\cG_{M}}\left(1-{\rm e}^{-\int_0^t \sum_{y^{'} \in \pi^{-1}_M(y)} W(X_s(w),y{'}){\rm d}s}\right)m({\rm d}y)}.
$$
Since $W \geq 0$, we get
\begin{align*}
\left|\mathbb{E}_{\qpr}(L^D_M (t,\omega) -  L^{D^{*}}_M (t,\omega))\right| &
\leq  \frac{1}{m(\cG_M)} \int_{\cG_M} p(t,x,x) \ex^t_{x,x}\left[|F_M(w,t)| ; t < \tau_{\cG_M} \right] m({\rm d}x) \\
& \leq  \frac{c(t) m(\cG_M \backslash B(0,2^M-2^{M/2}))}{m(\cG_M)} \\
& \ \ \ + \frac{1}{m(\cG_M)}\int_{B(0,2^M-2^{M/2})} p(t,x,x)
\ex^t_{x,x}\left[ |F_M(w,t)| ; t < \tau_{\cG_M} \right]
m({\rm d}x).
\end{align*}
As before, the first term is bounded by {$c(t) 2^{-\frac{M}{2}\,(d_f-1)}$} and goes to zero as $M \to \infty$. It is
enough to show that the second one goes to 0 as well. Denote it by
$I_M$. We have
\begin{align*}
I_M & \leq \frac{1}{m(\cG_M)}\int_{B(0,2^M-2^{M/2})}
p(t,x,x) \ex^t_{x,x}\left[|F_M(w,t)| ; t <
\tau_{B(0,2^M-2^{M/2})} \right] m({\rm d}x) \\ & \ \ \ \ \ \ \  +
\frac{2}{m(\cG_M)}\int_{B(0,2^M-2^{M/2})} p(t,x,x)
\pr^t_{x,x}\left[ t \geq \tau_{B(0,2^M-2^{M/2})} \right] m({\rm d}x)
=: I_{1,M}+I_{2,M}.
\end{align*}
By an argument similar to that in the proof of Proposition \ref{prop:dir_neum}
 we show that $I_{2,M} \to 0$ as $M \to \infty$. It is enough to estimate $I_{1,M}$.

By the inequality $|{\rm e}^{-x}-{\rm e}^{-y}| \leq |x-y|$, $x,y \geq 0$, the fact that $W \geq 0,$  Fubini, and properties of the measure $m$ we have
\begin{align*}
|F_M(w,t)| & \leq \nu \int_{\cG_M^c}\left(1-{\rm e}^{-\int_0^t W(X_s(w),y) {\rm d}s}\right) m({\rm d}y) \\ & \ \ \ + \nu \int_{\cG_M}\left({\rm e}^{-\int_0^t W(X_s(w),y) {\rm d}s}-{\rm e}^{-\int_0^t \sum_{y^{'} \in \pi^{-1}_M(y)} W(X_s(w),y{'}) {\rm d}s}\right) m({\rm d}y) \\
& \leq \nu \int_0^t \int_{\cG_M^c} W(X_s(w),y) m({\rm d}y) + \nu \int_0^t \int_{\cG_M} \sum_{y \neq y^{'} \in \pi^{-1}_M(y)} W(X_s(w),y{'}) m({\rm d}y) \\
& \leq 2 \nu \int_0^t \int_{\cG_M^c} W(X_s(w),y) m({\rm d}y).
\end{align*}
If the process remains inside $B(0,2^M-2^{M/2})$ up to time $t$ and $y\in\cG_M^c(=B(0,2^M)^c)$, then $d(X_s(w),y)\geq 2^{M/2}$ for all $s\in[0,t].$ It follows
\begin{align*}
I_{1,M} & \leq \frac{2\nu}{m(\cG_M)}\int_{B(0,2^M-2^{M/2})}
p(t,x,x) \ex^t_{x,x}\left[\int_0^t \int_{\cG_M^c} W(X_s(w),y) m({\rm d}y); t <
\tau_{B(0,2^M-2^{M/2})} \right] m({\rm d}x) \\ & \leq c(t,\nu) \sup_{z \in \cG} \int_{B(z,2^{M/2})^c} W(z,y) m({\rm d}y).
\end{align*}
By \textbf{(W2)}, this completes the proof.
\end{proof}

We now know that under conditions \textbf{(W1)}-\textbf{(W2)}, all four sequences:
$\mathbb{E}_\qpr[L^D_M(t,\omega)],$
$\mathbb{E}_\qpr[L^{D^*}_M(t,\omega)],$
$\mathbb{E}_\qpr[L^N_M(t,\omega)],$
$\mathbb{E}_\qpr[L^{N^*}_M(t,\omega)]$ have the same limit points.
We will prove that when additionally \textbf{(W3)} holds, then they are in fact
convergent.

\smallskip

Our main result of this part is the following.

\begin{theorem}\label{th:meanlimit}
Let $V$ be a Poissonian random field with the profile $W$ and let the conditions \textbf{(W1)}-\textbf{(W3)} hold. Then
for every $t>0$, $\mathbb{E}_\qpr[L^D_M(t,\omega)],$
$\mathbb{E}_\qpr[L^{D^*}_M(t,\omega)],$
$\mathbb{E}_\qpr[L^N_M(t,\omega)],$
$\mathbb{E}_\qpr[L^{N^*}_M(t,\omega)]$ are convergent as $M \to
\infty$ to a common finite limit $L(t)$.
\end{theorem}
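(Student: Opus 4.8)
The plan is to show that the sequence $\mathbb{E}_\qpr[L^{N^*}_M(t,\omega)]$ is convergent (in fact it is enough to show it is a Cauchy sequence, or that it is monotone in $M$ for $M \geq M_0$ and bounded), and then transfer the conclusion to the other three sequences using the results already established. Once $\mathbb{E}_\qpr[L^{N^*}_M(t,\omega)] \to L(t)$ is known, Corollary \ref{cor:dir_neum2} gives $\mathbb{E}_\qpr[L^{D^*}_M(t,\omega)] \to L(t)$, Lemma \ref{lm:dir2} gives $\mathbb{E}_\qpr[L^D_M(t,\omega)] \to L(t)$, and Proposition \ref{prop:dir_neum} (together with the fact that $\mathbb{E}_\qpr$ of an $L^2$-convergent family converges) gives $\mathbb{E}_\qpr[L^N_M(t,\omega)] \to L(t)$. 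So the whole theorem reduces to the convergence of the single sequence $\mathbb{E}_\qpr[L^{N^*}_M(t,\omega)]$.

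To analyze that sequence, I would first write it out explicitly using \eqref{eq:underQ} applied to $V^*_M$: since $V^*_M(x,\omega) = \int_{\cG_M} \sum_{y' \in \pi_M^{-1}(y)} W(x,y') \mu^\omega({\rm d}y)$, taking $f(y) = \int_0^t \sum_{y' \in \pi_M^{-1}(y)} W(X_s,y'){\rm d}s$ yields, $\pr^M_x$-a.s.,
\begin{align*}
\qex\left[e_{V^*_M}(t)\right] = \exp\left(-\nu \int_{\cG_M} \left(1 - \exp\left(-\int_0^t \sum_{y' \in \pi_M^{-1}(y)} W(X^M_s,y'){\rm d}s\right)\right) m({\rm d}y)\right).
\end{align*}
Then, using the symmetry of the bridge measure and the rotation-invariance identity from Lemma \ref{lm:rotation}(b), together with $p^M(t,x,x) \ex^{M,t}_{x,x}[\,\cdot\,] = \sum_{x' \in \pi_M^{-1}(x)} p(t,\pi_{M,i}^{-1}(x),x')\ex^t_{\pi_{M,i}^{-1}(x),x'}[\pi_M^{-1}(\cdot)]$, I would rewrite $m(\cG_M)\, \mathbb{E}_\qpr[L^{N^*}_M(t,\omega)]$ as an integral over a single fixed copy $\cG_M^{(1)} \subset \cG_{M+1}$ lifted back to $\cG$. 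The point of the Sznitman-style periodization is exactly that the integrand — after this manipulation — becomes invariant under the projection $\pi_{M+1}$, so that integrating over $\cG_M$ normalized by $m(\cG_M)$ gives the same value as integrating over $\cG_{M+1}$ normalized by $m(\cG_{M+1})$, up to the contributions of the vertex fibers and the fibers outside $\cG_{M+1}$, which are controlled by Lemma \ref{lm:series}(b) and Lemma \ref{lm:ergodic}. Condition \textbf{(W3)} enters here: it guarantees that the inner exponent $\int_0^t \sum_{y'} W(\cdot,y'){\rm d}s$ is monotone nondecreasing as we pass from the $M$-fiber sum to the $(M+1)$-fiber sum, which makes $\qex[e_{V^*_M}(t)]$ monotone in $M$ along the lifted copies, and hence $\mathbb{E}_\qpr[L^{N^*}_M(t,\omega)]$ monotone (for $M \geq M_0$) up to the summable error terms. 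A bounded monotone sequence converges.

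The main obstacle I anticipate is the bookkeeping in the step where one identifies $m(\cG_M)^{-1}\int_{\cG_M}$ with $m(\cG_{M+1})^{-1}\int_{\cG_{M+1}}$: one must carefully split $\cG_{M+1}$ into its three isometric copies $\cG_M^{(i)}$, use Lemma \ref{lm:rotation}(b) to see that the lifted integrand takes the same value on each copy (so the copy-sum just reproduces the single-copy integral with the correct normalization, since $m(\cG_{M+1}) = 3\, m(\cG_M)$), and then estimate the discrepancy coming from (i) points $y$ in $\mathcal V_M \setminus \{0\}$ where $p^M$ carries a factor of $2$, and (ii) the truncation of the fiber sum $\sum_{y' \in \pi_M^{-1}(y)} W$ from $\pi_{M+1}^{-1}$ to $\pi_M^{-1}$ — the latter requiring an argument in the spirit of Lemma \ref{lm:dir2} using \textbf{(W2)} to show the tails are $o(1)$ (and summable). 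The monotonicity from \textbf{(W3)} only controls the sign of the leading term; one still needs uniform boundedness of $\mathbb{E}_\qpr[L^{N^*}_M(t,\omega)]$, which follows immediately from $\qex[e_{V^*_M}(t)] \leq 1$ and $p^M(t,x,x) \leq c_5(t^{-d_s/2} \vee 2^{-M d_f})$ via \eqref{eq:gmbound}, giving $\mathbb{E}_\qpr[L^{N^*}_M(t,\omega)] \leq c_5(t^{-d_s/2} \vee 2^{-M d_f}) \leq c_5 t^{-d_s/2}$ for $M$ large. Assembling the monotone-up-to-summable-error structure then yields convergence to a finite limit $L(t)$, and the transfer to the remaining three sequences is automatic.
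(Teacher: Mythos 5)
Your overall architecture coincides with the paper's: reduce everything to the convergence of $\mathbb{E}_\qpr[L^{N^*}_M(t,\omega)]$ via Proposition \ref{prop:dir_neum}, Corollary \ref{cor:dir_neum2} and Lemma \ref{lm:dir2}, and obtain that convergence from monotonicity plus nonnegativity. The transfer steps are fine. The gap is in the monotonicity step itself, which is the only genuinely new content of the theorem, and your sketch of it is both incomplete and, in one place, pointed in the wrong direction.

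First, the paper proves \emph{exact} monotonicity $\mathbb{E}_\qpr L^{N^*}_{M+1}(t,\omega)\le\mathbb{E}_\qpr L^{N^*}_{M}(t,\omega)$, with no error terms at all. The mechanism is a chain of pointwise inequalities: (i) apply the exponential formula \eqref{eq:underQ} to $(V^*_{M+1})_{M+1}$, split $\int_{\cG_{M+1}}=\sum_{i=1}^3\int_{\cG_M^{(i)}}$, and use the elementary inequality $1-{\rm e}^{-\sum_i a_i}\le\sum_i(1-{\rm e}^{-a_i})$ to recombine the three partial fiber sums into one; (ii) use the exact set identity $\bigcup_{i=1}^3\pi_{M+1}^{-1}(\pi_{M,i}^{-1}(y))=\pi_M^{-1}(y)$ (valid off the null set $\cV_M$), so the $(M{+}1)$-fibers over the three copies reassemble \emph{exactly} into the $M$-fiber — there is no truncation and hence nothing for \textbf{(W2)} to control here; (iii) invoke \textbf{(W3)}, whose actual role is to replace the first argument $\pi_{M+1}(X_s)$ by $\pi_M(X_s)$ inside the \emph{same} fiber sum $\sum_{y'\in\pi_M^{-1}(\pi_M(y))}W(\cdot,y')$ — not, as you write, to compare an ``$M$-fiber sum'' with an ``$(M{+}1)$-fiber sum''; and (iv) enlarge the bridge-endpoint sum via the inclusion $\pi_{M+1}^{-1}(x)\subset\pi_M^{-1}(\pi_M(x))$, which only adds nonnegative terms and so gives an upper bound for free. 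Your proposal omits the inequality in (i) entirely — without it the comparison between levels $M$ and $M+1$ does not close — and replaces (ii) and (iv) by conjectured ``summable error terms'' to be handled by Lemma \ref{lm:series}(b), Lemma \ref{lm:ergodic} and an unproven \textbf{(W2)}-based tail estimate ``in the spirit of Lemma \ref{lm:dir2}'' (which, note, only yields $o(1)$, not summability). As written, the central inequality is asserted rather than proved, and the proposed error-control machinery addresses errors that the correct argument shows do not arise while leaving the actual comparison mechanism unexplained.
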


\begin{proof}
In light of Proposition \ref{prop:dir_neum}, Corollary \ref{cor:dir_neum2} and Lemma \ref{lm:dir2},
it is enough to show that $\mathbb{E}_{\qpr} L^{N^{*}}_M (t,\omega)$ converges to a finite limit
 $L(t)$ as $M \to \infty$. We will prove that $\mathbb{E}_{\qpr} L^{N^{*}}_M (t,\omega)$ is nonincreasing in $M \in \N$.
 Since it is also nonnegative, this will give our assertion.

 First recall that by $\cG_M^{(i)}$, $i=1,2,3$ we have denoted the
isometric copies of $\cG_M$ under $\pi_M^{-1}$ such that
$m\left(\cG_M^{(i)} \cap \cG_M^{(j)}\right)=0$, $i \neq j$, and
$\cG_{M+1} = \bigcup_{i=1}^3 {\cG_M^{(i)}}$. Also, we denoted  by
$\pi_{M,i}$  the restrictions of $\pi_{M}$ to $\cG^{(i)}_M$.

Observe that once the path of the process $X_t$ is fixed, the monotonicity
\begin{align}
\label{eq:monot}
\mathbb{E}_{\qpr} e_{(V^{*}_{M+1})_{M+1}}(t) \leq \mathbb{E}_{\qpr}e_{(V^{*}_{M})_M}(t), \quad t>0,
\end{align}
holds. Indeed, by Definition \ref{def:Wstar}, the exponential formula (\ref{eq:underQ}) applied to
$$f(y)=\1_{\cG_{M+1}}(y) \cdot \int_0^t \sum_{y{'} \in \pi_{M+1}^{-1}(\pi_{M+1}(y))} W(\pi_{M+1}(X_s),y{'})
{\rm d}s$$
and the
standard
inequality $1-{\rm e}^{-\sum_i a_i} \leq \sum_i (1-{\rm e}^{- a_i})$, $a_i
\geq 0$, we have
\begin{align*}
\mathbb{E}_{\qpr} e_{(V^{*}_{M+1})_{M+1}}(t) & = \exp\left(-\nu
\int_{\cG_{M+1}}\left(1-{\rm e}^{-\int_0^t \sum_{y{'} \in
\pi_{M+1}^{-1}(y)} W(\pi_{M+1}(X_s),y{'}) {\rm d}s}\right)m({\rm d}y)\right) \\ & =
\exp\left(-\nu \int_{\cG_M}\sum_{i=1}^3 \left(1-{\rm e}^{-\int_0^t
\sum_{y{'} \in \pi_{M+1}^{-1}(\pi^{-1}_{M,i}(y))} W(\pi_{M+1}(X_s),y{'})
{\rm d}s}\right)m({\rm d}y)\right)
\\ & \leq \exp\left(-\nu \int_{\cG_M}\left(1-{\rm e}^{-\int_0^t \sum_{i=1}^3 \sum_{y{'} \in
\pi_{M+1}^{-1}(\pi^{-1}_{M,i}(y))} W(\pi_{M+1}(X_s),y{'}) {\rm d}s}\right)m({\rm d}y)\right).
\end{align*}
Since for every $M \in \Z$ and $y \in \cG_M \backslash \cV_M$ one has  $\bigcup_{i=1}^3 \pi_{M+1}^{-1}(\pi^{-1}_{M,i}(y)) = \pi_M^{-1}(y)$ and the sets $\pi_{M+1}^{-1}(\pi^{-1}_{M,i}(y))$, $i=1,2,3$, are pairwise disjoint, we get that the last member on the right hand side of the above inequality is equal to
\begin{align} \label{expr:expr}
\exp\left(-\nu \int_{\cG_M}\left(1-{\rm e}^{-\int_0^t \sum_{y{'} \in \pi_M^{-1}(y)} W(\pi_{M+1}(X_s),y{'}) {\rm d}s}\right)m({\rm d}y)\right).
\end{align}
Finally, by \textbf{(W3)} we have
\[\int_0^t\sum_{y'\in\pi_M^{-1}(\pi_M(y))}W(\pi_{M+1}(X_s), y'){\rm d} s \geq \int_0^t \sum_{y'\in\pi_M^{-1}(\pi_M(y))} W(\pi_M(X_s),y')\,{\rm d} s,\quad y\in\cG,\]
and, consequently, the expression in \eqref{expr:expr} is not bigger than
\begin{align*}
\exp \left(-\nu \int_{\cG_M}\left(1-{\rm e}^{-\int_0^t \sum_{y'\in\pi_M^{-1}(y)}W(\pi_M(X_s),y')\,{\rm d} s}\right) m({\rm d} y)\right) = \mathbb{E_Q}e_{(V_M^*)_M}(t),
\end{align*}
 which is exactly (\ref{eq:monot}).

By Lemma \ref{lm:rotation} (a),  the inclusion $\pi^{-1}_{M+1}(x) \subset \pi^{-1}_{M}(\pi_M(x))$ and \eqref{eq:monot}, we have for $x \in \cG_{M+1}$,
\begin{align*}
p^{M+1}(t,x,x) \mathbb{E}_{\qpr} \otimes \ex^{M+1,t}_{x,x}\left[e_{V^{*}_{M+1}}(t)\right] & = \sum_{x^{'} \in \pi^{-1}_{M+1}(x) } p(t,x,x^{\prime})  \ex^t_{x,x^{'}}\left[\mathbb{E}_{\qpr}e_{(V^{*}_{M+1})_{M+1}}(t)\right]  \\
& \leq \sum_{x^{'} \in \pi^{-1}_{M}(\pi_M(x)) } p(t,x,x^{\prime}) \ex^t_{x,x^{'}}\left[\mathbb{E}_{\qpr} e_{(V^{*}_{M+1})_{M+1}}(t)\right] \\
& \leq \sum_{x^{'} \in \pi^{-1}_{M}(\pi_M(x)) } p(t,x,x^{\prime})
\ex^t_{x,x^{'}}\left[\mathbb{E}_{\qpr} e_{(V^{*}_M)_M}(t)\right].
\end{align*}
By the above bound, we get
\begin{align*}
\mathbb{E}_{\qpr} L^{N^*}_{M+1} (t,\omega) & \leq \frac{1}{m(\cG_{M+1})} \int_{\cG_{M+1}} \sum_{x{'} \in \pi^{-1}_{M}(\pi_M(x)) } p(t,x,x^{\prime}) \ex^t_{x,x^{'}}\left[\mathbb{E}_{\qpr} e_{(V^{*}_M)_M}(t)\right] m({\rm d}x) \\
& = \frac{1}{3m(\cG_M)} \sum_{i=1}^3  \int_{\cG_M^{(i)}} \sum_{x^{'} \in \pi^{-1}_{M}(\pi_M(x)) } p(t,x,x^{\prime})  \ex^t_{x,x^{'}}\left[\mathbb{E}_{\qpr} e_{(V^{*}_M)_M}(t)\right] m({\rm d}x) \\
& = \frac{1}{3m(\cG_M)} \sum_{i=1}^3  \int_{\cG_M} \sum_{x^{'} \in
\pi^{-1}_{M}(\pi^{-1}_{M,i}(x)) }
p(t,\pi^{-1}_{M,i}(x),x')  \ex^t_{\pi^{-1}_{M,i}(x),x^{'}}\left[\mathbb{E}_{\qpr} e_{(V^{*}_M)_M}(t)\right] m({\rm d}x)\\
& = \frac{1}{3m(\cG_M)} \sum_{i=1}^3  \int_{\cG_M} \sum_{x^{'} \in \pi^{-1}_{M}(x) } p(t,\pi^{-1}_{M,i}(x),x^{\prime})  \ex^t_{\pi^{-1}_{M,i}(x),x^{'}}\left[\mathbb{E}_{\qpr} e_{(V^{*}_M)_M}(t)\right] m({\rm d} x).
\end{align*}
Now, by Lemma \ref{lm:rotation} (b), we have for
all $x\in \cG_M:$
$$
\sum_{x^{'} \in \pi^{-1}_{M}(x) } p(t,\pi^{-1}_{M,i}(x),x^{\prime})   \ex^t_{\pi^{-1}_{M,i}(x),x^{'}}\left[\mathbb{E}_{\qpr} e_{(V^{*}_M)_M(t)}\right]
= \sum_{x^{'} \in \pi^{-1}_{M}(x) } p(t,x,x^{\prime})  \ex^t_{x,x^{'}}\left[\mathbb{E}_{\qpr} e_{(V^{*}_M)_M(t)}\right]
$$
for every $x \in \cG_M$, i.e. the terms under the last integral
sign do not depend on $i.$ We conclude that
$$
\mathbb{E}_{\qpr} L^{N^*}_{M+1} (t,\omega) \leq \frac{3}{3m(\cG_M)}
\int_{\cG_M} \sum_{x^{'} \in \pi^{-1}_{M}(x) } p(t,x,x^{\prime})
\ex^t_{x,x^{'}}\left[\mathbb{E}_{\qpr} e_{(V^{*}_M)_M(t)}\right] m({\rm d}x)
= \mathbb{E}_{\qpr} L^{N^*}_{M} (t,\omega),
$$
which completes the proof.
\end{proof}

\subsection{The variances lemma}

\begin{lemma}\label{lem:variances-fractal}
Let the profile $W$ satisfy the conditions \textbf{(W1)}-\textbf{(W3)}. Then for any given $t>0$ one has:
\begin{equation}\label{eq:variances-dirichlet}
\sum_{M=1}^\infty \mathbb{E}_\qpr[L_M^D(t,\omega)-\mathbb{E}_\qpr L_M^D(t,\omega)]^2<\infty
\end{equation}
and
\begin{equation}\label{variances-neumann}
\sum_{M=1}^\infty \mathbb{E}_\qpr[L_M^N(t,\omega)-\mathbb{E}_\qpr L_M^N(t,\omega)]^2<\infty.
\end{equation}
\end{lemma}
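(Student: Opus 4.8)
The plan is to establish the two variance bounds in \eqref{eq:variances-dirichlet} and \eqref{variances-neumann} by following the classical Borel--Cantelli-type strategy used for Poissonian obstacles in the Euclidean and gasket cases (cf. \cite{bib:Szn-hyp1, bib:KPP-PTRF}), adapted to the nonlocal setting. First I would write out, for each $M$, the variance $\mathbb{E}_\qpr[L_M^D(t,\omega) - \mathbb{E}_\qpr L_M^D(t,\omega)]^2$ as a double integral over $\cG_M \times \cG_M$ of the covariance
\[
\cov_{\qpr}\left(p(t,x,x)\ex^t_{x,x}[e_V(t); t<\tau_{\cG_M}], \ p(t,x',x')\ex^t_{x',x'}[e_V(t); t<\tau_{\cG_M}]\right)
\]
divided by $m(\cG_M)^2$, and similarly for the Neumann version with $p^M$ and $\ex^{M,t}_{x,x}$. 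Using the exponential formula \eqref{eq:underQ} for the Poisson field, the averaged Feynman--Kac functional along a fixed pair of bridge paths $w, w'$ factorizes, and the covariance is controlled by the ``interaction'' term coming from Poisson points that influence both bridges: schematically, $\mathbb{E}_\qpr[\Phi(x)\Phi(x')] - \mathbb{E}_\qpr[\Phi(x)]\mathbb{E}_\qpr[\Phi(x')]$ is bounded by something like
\[
c(t)\, p(t,x,x)\, p(t,x',x')\, \ex^t_{x,x}\otimes\ex^t_{x',x'}\!\left[ \nu \int_{\cG}\left(1-{\rm e}^{-\int_0^t W(X_s,y)\,{\rm d}s}\right)\!\left(1-{\rm e}^{-\int_0^t W(X'_s,y)\,{\rm d}s}\right) m({\rm d}y)\right],
\]
using $|{\rm e}^{-a-b-c} - {\rm e}^{-a}{\rm e}^{-b}| \le c\,(1-{\rm e}^{-c})$ with $a,b$ the single-bridge exponents and $c$ the shared exponent. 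Since $1-{\rm e}^{-r}\le r$, the inner $m({\rm d}y)$-integral is bounded by $\nu \int_0^t\!\int_0^t\!\int_{\cG} W(X_s,y)W(X'_{s'},y)\,m({\rm d}y)\,{\rm d}s\,{\rm d}s'$, which is nonzero only when the two bridge paths come within interaction range of a common point $y$.

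Next I would bound this interaction integral geometrically. For bridges of length $t$ starting and ending at $x$ (resp.\ $x'$), if $d(x,x')$ is large then with high probability the paths stay in small neighborhoods of $x$ and $x'$, so the overlap region is empty unless a Poisson point $y$ lies within distance (roughly) $\sup_{s}d(X_s,x) + \sup_s d(X'_{s'},x') + d(y,\text{paths})$ of both. Quantitatively, I would split according to whether $d(x,x') \le 2^{M/4}$ or $> 2^{M/4}$ (matching the scale in \textbf{(W2)}). In the first regime one uses the crude bound $p(t,x,x)\le c_0(t)$, $\ex^t\otimes\ex^t[\cdots]\le c(t)\sup_z\int_\cG W(z,y)^2 \wedge (\text{something integrable})$—more carefully, one invokes \textbf{(W1)} (the $L^1$ domination by $h$) together with the Kato-class property of $W(\cdot,y)$ to see that $\sup_x \ex^t_{x,x}\int_0^t W(X_s,y)\,{\rm d}s \cdot (\cdots)$ integrates against $m({\rm d}y)$; the contribution of the region $\{d(x,x')\le 2^{M/4}\}$ to the normalized double integral is $O(m(\cG_M)^{-2}\cdot m(\cG_M) \cdot m(B(0,c2^{M/4})) \cdot c(t)) = O(2^{-M(d_f - d_f/4)})$ up to constants, which is summable. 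In the second regime, $d(x,x')>2^{M/4}$, one of the two paths must travel far, or the common Poisson point $y$ must be far from at least one of $x,x'$; here I would use Lemma \ref{lm:supr_sum} (summability of $\sup_x\pr_x[\sup_{s\le t}d(X_s,x)>2^{M/4}]$) to handle the event that a path moves a distance $\ge 2^{M/4}/4$, and condition \textbf{(W2)} together with \textbf{(W1)} to handle the complementary event where $y \in B(x,2^{M/4}/4)^c$ (so $W(x,y)$ is controlled by the tail quantity $\sup_x\int_{B(x,2^{M/4})^c}W(x,y)\,m({\rm d}y)$, which is summable in $M$). Combining, $\mathbb{E}_\qpr[L_M^D(t,\omega)-\mathbb{E}_\qpr L_M^D(t,\omega)]^2$ is dominated by a sum of terms each forming a convergent series in $M$.

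For the Neumann version I would repeat the same computation with $p^M(t,x,x)$, $\ex^{M,t}_{x,x}$ and the reflected process $X^M$ in place of $p$, $\ex^t_{x,x}$, $X$. The only extra inputs are the uniform bound $g^M(t,x,y)\le c_5(t^{-d_s/2}\vee 2^{-Md_f})$ from Lemma \ref{lm:series}(a) (which replaces \eqref{eq:sup-of-g}), the relation $R_{2,M}(t)=\frac{1}{m(\cG_M)}\int_{\cG_M}(p^M(t,x,x)-p(t,x,x))\,m({\rm d}x)$ and Lemma \ref{lm:ergodic} to pass between $p^M$ and $p$ on the ``diagonal'', and the observation that the reflected bridge paths have the same small-neighborhood confinement because $d(X^M_s,X^M_0)=d(\pi_M(X_s),\pi_M(x))\le d(X_s,x)$—so Lemma \ref{lm:supr_sum} still applies to bound the relevant moving-far probabilities. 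Alternatively, and more cleanly, one can simply estimate $\mathbb{E}_\qpr[L_M^N - \mathbb{E}_\qpr L_M^N]^2 \le 3\left(\mathbb{E}_\qpr[L_M^D - \mathbb{E}_\qpr L_M^D]^2 + \mathbb{E}_\qpr[(L_M^N - L_M^D) - \mathbb{E}_\qpr(L_M^N - L_M^D)]^2\right)$ and note that the second term is bounded by $4\,\mathbb{E}_\qpr(L_M^N - L_M^D)^2$, which by Proposition \ref{prop:dir_neum} is already the term of a convergent series; this reduces \eqref{variances-neumann} to \eqref{eq:variances-dirichlet}.

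The main obstacle I anticipate is the geometric bookkeeping in the ``far'' regime $d(x,x') > 2^{M/4}$: unlike the Euclidean case, the gasket lacks translation invariance, so the interaction integral $\int_\cG W(x,y)W(x',y)\,m({\rm d}y)$ cannot be reduced to a convolution, and one must carefully use the asymmetric two-argument structure of $W$ together with conditions \textbf{(W1)}--\textbf{(W2)}. The correct choice of the cutoff scale ($2^{M/4}$, already hard-wired into \textbf{(W2)}) and the bridge-confinement estimates of Lemma \ref{lm:supr_sum} are what make the near/far split produce summable tails; getting the exponents $d_f$, $d_f/4$ to cooperate so that every piece is genuinely a convergent series (rather than merely $o(1)$) is the delicate point, but it parallels the treatment of $R_{1,M}$ and $\mathcal{A}_M$ in the proofs of Proposition \ref{prop:dir_neum} and Lemma \ref{lm:ergodic}.
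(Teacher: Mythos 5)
Your strategy is correct and rests on the same three pillars as the paper's proof: the reduction of \eqref{variances-neumann} to \eqref{eq:variances-dirichlet} via Proposition \ref{prop:dir_neum} (exactly how the paper opens, using $\ex\xi^2\le 2\ex\eta^2+2\ex(\xi-\eta)^2$); condition \textbf{(W2)} at the scale $2^{M/4}$ for the long-range part of the interaction; Lemma \ref{lm:supr_sum} for the path-confinement events (note you must first convert bridge probabilities into free-process probabilities, as the paper does in \eqref{eq:int-d1-1} via bridge symmetry and \eqref{eq:bridge}); and an Ahlfors-regularity volume bound of order $3^{-3M/4}$ for pairs of nearby starting points. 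Where you genuinely differ is the decorrelation mechanism. The paper truncates the potential in space at radius $a_M=2^{M/4}$, factors the Feynman--Kac functional as $F_M\widetilde F_M$, duplicates the environment ($\qpr^{\otimes 3}$ in the measure $\nu_M$ of \eqref{def:newmeas0}), and uses the exact $\qpr$-independence of the near-range functionals $F_M(w_1,\cdot)$, $F_M(w_2,\cdot)$ when the $a_M$-sausages of the two paths are disjoint, so that the leading product term integrates to zero and only $1-\ex_{\qpr}\widetilde F_M$ survives. You instead compute the covariance exactly from the exponential formula \eqref{eq:underQ}: with $f(y)=\int_0^t W(X_s,y)\,{\rm d}s$ and $f'$ its analogue for the second bridge, the covariance equals $e^{-\nu\int(1-e^{-f})dm}\,e^{-\nu\int(1-e^{-f'})dm}\bigl(e^{\nu\int(1-e^{-f})(1-e^{-f'})dm}-1\bigr)\le \nu\int_{\cG}(1-e^{-f})(1-e^{-f'})\,dm$. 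This is more direct and dispenses with the three-environment bookkeeping; both routes then feed into the same tail estimate from \textbf{(W2)}.

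One caveat in your write-up: passing from $(1-e^{-f})(1-e^{-f'})$ to $\int_{\cG} W(X_s,y)W(X'_{s'},y)\,m({\rm d}y)$ by applying $1-e^{-r}\le r$ to \emph{both} factors produces a term you cannot control, since $W$ may be unbounded and \textbf{(W1)}--\textbf{(W2)} give only $L^1$-type information in each variable separately. The correct move --- which your subsequent ``far regime'' sentence implicitly makes --- is to keep one factor bounded by $1$: when $d(x,x')>2^{M/4}$ and both bridges stay within $2^{M/4-2}$ of their starting points, every $y\in\cG$ lies at distance at least $2^{M/4-2}$ from the whole of one of the two paths; on the region far from the first path use $(1-e^{-f})(1-e^{-f'})\le 1-e^{-f}\le f$ and integrate to get $t\sup_z\int_{B(z,2^{M/4-2})^c}W(z,y)\,m({\rm d}y)$, and symmetrically on the complement. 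With this one-line fix your argument closes and parallels the paper's treatment of $D_0^M$, $D_1^M$, $D_2^M$. (Neither proof uses \textbf{(W3)}, which enters only in Theorem \ref{th:meanlimit}.)
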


\begin{proof}
Since for any $L^2-$random variables $\xi,\eta$ one has
 $\mathbf E\xi^2\leq 2\mathbf E \eta^2+2\mathbf{E}(\eta-\xi)^2$, in light of Proposition \ref{prop:dir_neum}, it is enough to prove (\ref{eq:variances-dirichlet}).

\smallskip

Fix $t>0$. Let us introduce the family of measures
\begin{align}
\label{def:newmeas0}
\nu_M:= \left(\frac{1}{{m(\cG_M)}}\int_{\cG_M}p(t,x,x)\pr^t_{x,x} m({\rm d}x)\right)^{\otimes 2} \otimes \qpr^{\otimes 3}, \quad M \in \Z,
\end{align}
on the space $\widetilde \Omega = D([0,t],\cG)^2 \times \Omega^3$. Also, let $(a_M)_{M \in \Z}$ and $(c_M)_{M \in \Z}$ be increasing sequences of positive numbers such that $a_M< c_M<2^{M}$, $M \in \Z$. They will be chosen later on.

For $r>0$ set
$$
V_r(x,\omega):= \int_{B(x,r)}W(x,y)\mu^{\omega}({\rm d}y) \quad \text{and} \quad \widetilde V_r(x,\omega):= \int_{B(x,r)^c}W(x,y)\mu^{\omega}({\rm d}y), \quad r>0
$$
 and then denote
$$
F_M(w,\omega):= {\rm e}^{-\int_0^t V_{a_M}(X_s(w),\omega){\rm d}s},\quad  \quad \widetilde F_M(w,\omega):= {\rm e}^{-\int_0^t \widetilde V_{a_M}(X_s(w),\omega){\rm d}s}, \ \quad M \in \Z.
$$
Note that for every $M$ we have $0 \leq F_M(w,\omega) \leq 1$ and $0 \leq \widetilde F_M(w,\omega)\leq 1$.

Observe that using measures $\nu_M$ and functionals $F_M, \widetilde F_M,$  we can rewrite terms of (\ref{eq:variances-dirichlet}) as
\begin{equation}
\begin{split}
\label{eq:var-dir-1}
\mathbb{E}_\qpr[L_M^D-\mathbb{E}_\qpr L_M^D]^2
& = \int_{\widetilde \Omega} \prod_{i=1}^2   \left(F_M(w_i,\omega_0)\widetilde F_M(w_i,\omega_0) - F_M(w_i,\omega_i)\widetilde F_M(w_i,\omega_i)\right)\1_{\left\{t<\tau_{\cG_M}(w_i)\right\}} \\
& \ \ \ \ \ \ \ \ \ \ \cdot {\rm d} \nu_M(w_1, w_2, \omega_0, \omega_1,\omega_2 )\\
& =: \int_{\widetilde{\Omega}} \mathcal X(w_1, w_2, \omega_0, \omega_1,\omega_2 )\,{\rm d} \nu_M(w_1, w_2, \omega_0, \omega_1,\omega_2 ).
\end{split}
\end{equation}
We split the set $\widetilde{\Omega}$ into three parts:
\begin{eqnarray*}
D_0^M&:=& \left\{(w_1,w_2) \in D([0,t],\cG)^2: \ \text{for every $s \in [0,t]$} \ d(X_s(w_1),X_s(w_2)) > 2c_M \right\}\times \Omega^3,\\
D_1^M&:= & \left\{(w_1,w_2) \in D([0,t],\cG)^2: \right.  \left.d(X_0(w_1),X_0(w_2)) > 2c_M  \right. \\ & & \left. \ \ \  \ \ \ \ \ \ \ \ \ \text{and there exists $s \in (0,t]$} \ \text{such that} \ d(X_s(w_1),X_s(w_2)) \leq 2a_M \right\}\times\Omega^3,\\
D_2^M &:=& \widetilde{\Omega}\setminus (D_0^M\cup D_1^M)
\end{eqnarray*}
and integrate over each of these parts separately.

\smallskip

To estimate the integral over $D_0^M$ first note that
\begin{align*}
\prod_{i=1}^2 & \left(F_M(w_i,\omega_0)\widetilde F_M(w_i,\omega_0) - F_M(w_i,\omega_i)\widetilde F_M(w_i,\omega_i)\right) \\
& = \prod_{i=1}^2 \left(F_M(w_i,\omega_0) - F_M(w_i,\omega_i)\right) + \prod_{i=1}^2 \left(F_M(w_i,\omega_0)\widetilde F_M(w_i,\omega_0) - F_M(w_i,\omega_i)\widetilde F_M(w_i,\omega_i)\right) \\
& \ \ \ \ - \prod_{i=1}^2 \left(F_M(w_i,\omega_0) - F_M(w_i,\omega_i)\right) \\
& = \prod_{i=1}^2 \left(F_M(w_i,\omega_0) - F_M(w_i,\omega_i)\right) + F_M(w_1,\omega_0)F_M(w_2,\omega_0)\left(\widetilde F_M(w_1,\omega_0) \widetilde F_M(w_2,\omega_0)-1\right) \\
& \ \ \ \ \ \ \ \ \ \ \ \ \ \ \ \ \ \ \ \ \ \ \ \ \ \ \ \ \ \ \ \ \ \ \ \ \ \ \ \ \ \ \  + F_M(w_1,\omega_0)F_M(w_2,\omega_2)\left(1-\widetilde F_M(w_1,\omega_0) \widetilde F_M(w_2,\omega_2)\right) \\
& \ \ \ \ \ \ \ \ \ \ \ \ \ \ \ \ \ \ \ \ \ \ \ \ \ \ \ \ \ \ \ \ \ \ \ \ \ \ \ \ \ \ \  + F_M(w_2,\omega_0)F_M(w_1,\omega_1)\left(1-\widetilde F_M(w_2,\omega_0) \widetilde F_M(w_1,\omega_1)\right) \\
& \ \ \ \ \ \ \ \ \ \ \ \ \ \ \ \ \ \ \ \ \ \ \ \ \ \ \ \ \ \ \ \ \ \ \ \ \ \ \ \ \ \ \  + F_M(w_1,\omega_1)F_M(w_2,\omega_2)\left(\widetilde F_M(w_1,\omega_1) \widetilde F_M(w_2,\omega_2)-1\right),
\end{align*}
and, since $|F_M(w_i,\omega_k)|\leq 1$, $i=1,2$, $k=0,1,2$, consequently,
\begin{align*}
\prod_{i=1}^2 & \left(F_M(w_i,\omega_0)\widetilde F_M(w_i,\omega_0) - F_M(w_i,\omega_i)\widetilde F_M(w_i,\omega_i)\right) \\
& \leq \prod_{i=1}^2 \left(F_M(w_i,\omega_0) - F_M(w_i,\omega_i)\right)+ 2- \left(\widetilde F_M(w_1,\omega_0) \widetilde F_M(w_2,\omega_2) +\widetilde F_M(w_2,\omega_0) \widetilde F_M(w_1,\omega_1)\right).
\end{align*}
For a given $M \in \Z$ and a trajectory $X_s(w),$ the functional $F_M(\cdot, w)$ depends only on
those Poisson points that fell onto the set $X_{[0,s]}^{a_M}(w):= \bigcup_{0\leq s\leq t}(X_s(w)+B(0,a_M)).$
Since on the set $D_0^M$ one has $X_{[0,s]}^{a_M}(w_1)\cap X_{[0,s]}^{a_M}(w_2)=\emptyset,$
the random variables $(F_M(w_1,\omega_0)-F_M(w_1,\omega_1))$ and $(F_M(w_2,\omega_0)-F_M(w_2,\omega_2))$ are $\qpr^{\otimes 3}$-independent, and consequently,
$$
\int_{\widetilde \Omega} \prod_{i=1}^2   \left(F_M(w_i,\omega_0) - F_M(w_i,\omega_i)\right)\1_{\left\{t<\tau_{\cG_M}(w_i)\right\}} d \nu_M(\omega_0, \omega_1,\omega_2, w_1, w_2) = 0.
$$
Therefore we have
\begin{align*}
\int_{D_0^M}\mathcal X d\nu_M
& \leq 2 \left( \left(\frac{1}{m(\cG_M)}\int_{\cG_M}p(t,x,x)\pr^t_{x,x}\left[t<\tau_{\cG_M}(w)\right] m({\rm d}x)\right)^2 \right. \\ & \left. \ \ \ \ \ \ \ \ \ \ \ \ \ \ \ \  -\left(\frac{1}{m(\cG_M)}\int_{\cG_M}p(t,x,x)\ex^t_{x,x}\left[ \1_{\left\{t<\tau_{\cG_M}(w)\right\}} \mathbb{E}_{\qpr}\left[\widetilde F_M(w,\omega)\right]\right] m({\rm d}x)\right)^2\right) \\
& \leq \frac{2 {(c_3c_0(t))^2}}{m(\cG_M)}\int_{\cG_M}\ex^t_{x,x}\left[\1_{\left\{t<\tau_{\cG_M}(w)\right\}} \left(1 - \mathbb{E}_{\qpr}\left[\widetilde F_M(w,\omega)\right]\right)\right] m({\rm d}x).
\end{align*}
(The last bound is a consequence of the inequality $a^2-b^2 \leq 2a(a-b)$, where $0 \leq b \leq a$.)
By Jensen and Fubini, since $1-{\rm e}^{-x}\leq x,$ $x \geq 0,$ we get that for every $w$
\begin{align*}
1 - \mathbb{E}_{\qpr}\left[\widetilde F_M(w,\omega)\right] &\leq 1 - {\rm e}^{-\int_0^t \mathbb{E}_{\qpr}\left[\widetilde V_{a_M}(X_s(w),\omega)\right]{\rm d}s} \nonumber \\ &= 1 - {\rm e}^{-\nu\int_0^t  \int_{B(X_s,a_M)^c} W(X_s(w),y) m({\rm d}y){\rm d}s} \leq \nu\int_0^t  \int_{B(X_s,a_M)^c} W(X_s(w),y) m({\rm d}y){\rm d}s\\
&\leq \nu t \sup_{z\in\cG}\int_{B(z,a_M)^c} W(z,y) \, m ({\rm d} y).
\end{align*}
{Thus
\begin{align}\label{eq:var-dir-2}
\int_{D_0^M}\mathcal X d\nu_M \leq c(t,\nu) \sup_{x \in \cG} \int_{B(x,a_M)^c} W(x,y) m({\rm d}y).
\end{align}}

 \smallskip

 On the set $D_1^M,$ one necessarily has
 $$
\sup_{s \in (0,t]}d(X_0(w_i),X_s(w_i)) > c_M - a_M, \quad \ \text{for} \ i=1 \ \text{or} \ i=2,
$$
 and since $|F_M(w_i,\omega_k)|\leq 1$, $i=1,2$, $k=0,1,2$, the integral over $D_1^M$ is not bigger than
 \begin{equation}\label{eq:int-d1}
 \frac{4}{m(\cG_M)}\int_{\cG_M} p(t,x,x) \mathbf P^t_{x,x}[\sup_{s\in(0,t]}d(X_0,X_s)>c_M-a_M] \,m({\rm d}x).
 \end{equation}
 By the bridge symmetry we obtain that for every $x\in \cG_M$
 \begin{eqnarray}\label{eq:int-d1-1}
 p(t,x,x)\mathbf E^t_{x,x}[\sup_{s\in(0,t]}d(X_0,X_s)>c_M-a_M] &\leq&  2p(t,x,x) \mathbf P^t_{x,x}[ \sup_{s\in(0,t/2]}d(X_0,X_s)>c_M-a_M]\nonumber\\
 &=& 2\mathbf{E}_x\left[\mathbf{1}\{\sup_{s\in(0,t/2]}d(X_0,X_s)>c_M-a_M\} p(t/2, X_{t/2},x)\right]\nonumber\\
 &\leq & 2c_3c_0(t) \sup_{x\in \cG_M} \mathbf{P}_x\left[\sup_{s\in(0,t/2]} d(X_s,X_0)>c_M-a_M\right].
  \end{eqnarray}

 \smallskip

Finally, to estimate the integral over $D_2^M$ we just estimate the measure of this set
(the integrand is not bigger than 2):
\begin{equation}\label{eq:int-d2}\nu_M(D_2^M)\leq  \, \frac{c_3c_0(t) m(\{(x,y)\in \cG_M\times \cG_M: d(x,y)\leq 2c_M\})}{m(\cG_M)^2}\leq \frac{c(t) (2c_M)^{d_f}}{3^M}.
\end{equation}
If we choose $c_M=2^{M/2}$ and $a_M=2^{M/4}$, we get that \eqref{eq:var-dir-2} and (\ref{eq:int-d1-1}) are terms of convergent series, due to \textbf{(W2)} and Lemma \ref{lm:supr_sum}, respectively. The estimate in (\ref{eq:int-d2}) is summable as well.
The Lemma follows.
\end{proof}

\subsection{Convergence-conclusion}
\noindent Having proven Theorem \ref{th:meanlimit} and Lemma \ref{lem:variances-fractal}, we can give the proof of the main result.

\begin{theorem}\label{th:main-fractal}
Let the profile $W$ satisfies the conditions \textbf{(W1)}-\textbf{(W3)}. Then the random measures $l_M^D(\omega)$
and $l_M^N(\omega)$ are $\qpr-$almost surely vaguely convergent to a common nonrandom limit measure $l$ on $\R_+$.
\end{theorem}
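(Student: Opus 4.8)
The plan is to deduce the theorem from Theorem~\ref{th:meanlimit} (convergence of the averaged Laplace transforms) and Lemma~\ref{lem:variances-fractal} (summability of the variances), by a Borel--Cantelli argument followed by the continuity theorem for Laplace transforms, exactly along the lines sketched in the Introduction (cf.~\cite{bib:Szn-hyp1, bib:KPP-PTRF}). First I would fix $t>0$. By the Chebyshev inequality and \eqref{eq:variances-dirichlet}, for every $k\in\N$ the series $\sum_{M}\qpr(|L_M^D(t,\omega)-\mathbb{E}_{\qpr}L_M^D(t,\omega)|>1/k)$ converges, so the Borel--Cantelli lemma gives $L_M^D(t,\omega)-\mathbb{E}_{\qpr}L_M^D(t,\omega)\to 0$ for $\qpr$-almost all $\omega$; combined with $\mathbb{E}_{\qpr}L_M^D(t,\omega)\to L(t)$ from Theorem~\ref{th:meanlimit}, this yields $L_M^D(t,\omega)\to L(t)$ $\qpr$-a.s., and, arguing identically with \eqref{variances-neumann}, also $L_M^N(t,\omega)\to L(t)$ $\qpr$-a.s. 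Running this for every $t$ in a fixed countable dense subset $D\subset(0,\infty)$ and intersecting the countably many resulting full-measure events produces a single event $\Omega_0$ with $\qpr(\Omega_0)=1$ on which $L_M^D(t,\omega)\to L(t)$ and $L_M^N(t,\omega)\to L(t)$ for all $t\in D$.

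Next I would upgrade this to convergence for \emph{all} $t>0$. For fixed $M$ and $\omega$, the maps $t\mapsto L_M^D(t,\omega)$ and $t\mapsto L_M^N(t,\omega)$ are Laplace transforms of the positive Radon measures $l_M^D(\omega)$, $l_M^N(\omega)$ on $[0,\infty)$; they are finite on $(0,\infty)$ because the Feynman--Kac semigroups $T_t^{D,M,\omega}$, $T_t^{N,M,\omega}$ are of Hilbert--Schmidt type, and in particular they are convex on $(0,\infty)$. Since a sequence of convex functions that converges on a dense subset of an open interval converges, locally uniformly, on the whole interval, and since the limit $L$ is itself convex (being a pointwise limit of the convex functions $\mathbb{E}_{\qpr}L_M^D(\cdot,\omega)$), hence continuous on $(0,\infty)$, I get that for every $\omega\in\Omega_0$ one has $L_M^D(t,\omega)\to L(t)$ and $L_M^N(t,\omega)\to L(t)$ for every $t>0$.

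Finally, for each $\omega\in\Omega_0$ I would invoke the continuity theorem for Laplace transforms of measures on $[0,\infty)$ (cf.~\cite{bib:KPP-PTRF} and the references therein): the pointwise convergence $L_M^D(t,\omega)\to L(t)$ on $(0,\infty)$ forces $l_M^D(\omega)$ to converge vaguely on $\R_+$ to the unique Radon measure $l$ whose Laplace transform equals $L$, and the same argument applied to $l_M^N(\omega)$ --- whose Laplace transforms converge to the same $L$ --- gives $l_M^N(\omega)\to l$ vaguely as well. The measure $l$ is locally finite since $L(t)<\infty$ and nonrandom since $L$ is, which is the assertion. I expect the only genuinely delicate point to be the passage from the $\qpr$-a.s.\ convergence available separately at each fixed $t$ to a single exceptional null set working for all $t$ simultaneously; this is precisely what the restriction to a countable dense set of $t$ together with the convexity of the Laplace transforms takes care of, the substantive work having already been done in Theorem~\ref{th:meanlimit} and Lemma~\ref{lem:variances-fractal}.
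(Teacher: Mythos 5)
Your proposal is correct and follows essentially the same route as the paper: Chebyshev plus the summable variances of Lemma \ref{lem:variances-fractal} and Borel--Cantelli give a.s.\ convergence of $L_M^D(t,\omega)$ and $L_M^N(t,\omega)$ to $L(t)$ for each fixed $t$, one intersects over a countable dense set of $t$'s, upgrades to all $t>0$, and then passes from convergence of Laplace transforms to vague convergence of the measures. The only cosmetic differences are that you justify the dense-to-all-$t$ step via convexity (the paper leaves it implicit) and invoke the Laplace continuity theorem where the paper argues via finiteness of ${\rm e}^{-\lambda}l_M^D(\omega)({\rm d}\lambda)$ and vague relative compactness of finite measures -- these are the same argument in substance.
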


\begin{proof} We prove the statement for the measures $l_M^D(\omega),$ the proof for $l_M^N(\omega)$ is identical.
A classical Borel-Cantelli lemma argument gives that for any fixed $t>0,$ the Laplace transforms $L_M^D(t,\omega)$ converge $\qpr-$almost surely to the limit $L(t).$
Therefore, $\qpr-$a.s., the same statement holds for all rational $t$'s.

Consequently, $\mathbb Q-$a.s., sequences $L_M^D(t,\omega)$ converge to $L(t)$ for all $t>0.$ In particular, $L_M^D(1,\omega)$ is convergent to $L(1),$ and so the measures
$\widetilde{l}_M^D(\omega)({\rm d}\lambda)= {\rm e}^{-\lambda}\,l_M^D(\omega)({\rm d}\lambda)$ are finite. Since any sequence of finite measures on $\mathbb R_+$ is vaguely relatively compact (see \cite[Lemma A9]{bib:SSV}), the measures $\widetilde{l}_M^D(\omega),$ and consequently also $l_M^D(\omega)$, are vaguely convergent to the measure having $L$  for its Laplace transform.
\end{proof}

\subsection{{Existence of IDS for subordinate Brownian motions killed by Poissonian obstacles}}
{In this subsection we discuss the problem of existence of IDS for subordinate Brownian motions on the Sierpi\'nski gasket which are killed upon coming to the set of random obstacles $\cO(\omega):=\bigcup_i \overline B(y_i(\omega),a)$, where $\left\{y_i(\omega)\right\}_i$ is a realization of the Poisson point process over the probability space $(\Omega, \cM, \qpr)$ and $a > 0$ is the radius of the obstacles.  Informally speaking, such a system may be seen as the motion of a particle in the random environment given by the potential of the form
$$
V(x,\omega) =\sum_{y_i(\omega)} W(x,y_i(\omega)), \ \quad \ \text{where} \ \quad \ W(x,y)= \infty \cdot \1_{B(x,a)}(y).
$$
Formally, in this case, we are interested in the spectral problem for the semigroups
\begin{align}
\label{def:sem-dir-obst}
P_t^{D,M,\omega} f(x) = \ex_x \left[f(X_t); t< T^{X}_{\cO(\omega)}, t<\tau_{\cG_M}\right], \quad f \in L^2(\cG_M,m), \quad M \in \Z, \quad t>0,
\end{align}
and
\begin{align}
\label{def:sem-neu-obst}
P_t^{N,M,\omega} f(x) = \ex^M_x \left[f(X^M_t); t< T^{X^M}_{\cO(\omega)}\right],  \quad f \in L^2(\cG_M,m), \quad M \in \Z, \quad t>0,
\end{align}
where $T^{X}_{\cO(\omega)}$ and $T^{X^M}_{\cO(\omega)}$ are, respectively, the first hitting times of the set $\cO(\omega)$ for the subordinate process $X$ and its 'reflected' counterpart $X^M$. }

{The existence of IDS for such a problem is not directly covered by our results, but it can be proved by a modification of the argument presented in this paper for Poissonian potentials. To this goal, the Feynman-Kac functional $e^{-\int_0^t V( \bullet ,\omega) ds}$ should be replaced by the condition $\left\{t< T^{\bullet}_{\cO(\omega)}\right\}$ for an appropriate process $X$ or $X^M$. Instead of 'periodization' of $V$ one should consider the 'periodization' of the Poisson random measure. By this simplification, all results of Section 3 can be immediately rearranged to the case of killing obstacles.

More precisely, for given $M\geq 0,$ we consider two possible `periodizations' of the obstacle set:
\begin{align}\label{eq:obs-per-usu}
\mathcal O_M(\omega)= & \pi_M^{-1}(\mathcal O(\omega)\cap \cG_M),\\
\label{eq:obs-per-szn}
\mathcal O_M^*(\omega)=& \bigcup_{y\in\mathcal N_M(\omega)} \overline{B}(y,a),
\end{align}
where
\[\mathcal N_M(\omega)= \pi_M^{-1}((y_i(\omega))_i\cap \cG_M).\]
The difference between those sets is that $\mathcal O_M(\omega)$ arises as
a usual periodization of the part of the set $\mathcal O(\omega)$ that
lies within $\cG_M,$ whereas
to obtain $\mathcal O^*_M(\omega),$ one periodizes just the Poisson
points that fell into $\cG_M,$ and then builds obstacles on this periodic set.

As in the potential case, we consider the Laplace transforms of the  empirical measures based on the spectra of the semigroups
$(P_t^{D,M,\omega})$ and $(P_t^{N,M,\omega}),$
namely (we keep the same notation),
\begin{equation}\label{eq:lapl-pois-D}
L_M^{D}(t,\omega)=\frac{1}{m(\cG_M)}\int_{\cG_M}
p(t,x,x) \mathbf P^t_{x,x}\left[T^X_{\mathcal O(\omega)}>t, \tau_{\cG_M}>t\right]\,{\rm d}m(x)
\end{equation}
\begin{equation}\label{eq:lapl-pois-N}
L_M^{N}(t,\omega)=\frac{1}{m(\cG_M)}\int_{\cG_M}
p^M(t,x,x) \mathbf P^{M,t}_{x,x}\left[T^{X^M}_{\mathcal O(\omega)}>t\right]\,{\rm d}m(x).
\end{equation}
Observe that in both these expressions we may replace the set $\mathcal O(\omega)$ with $\mathcal O_M(\omega).$  Expressions  $L_M^{D^*}(t,\omega)$ and $L_M^{N^*}(t,\omega)$ arise similarly, but with $\mathcal O_M^*(\omega)$ replacing $\mathcal O_M(\omega).$

As above,
we prove the following.
\begin{proposition}\label{prop:Pois-mean} Let $t>0$ be given.
For the quantities $L^D_M(t,\omega),$  $L^N_M(t,\omega),$ $L^{D*}_M(t,\omega),$ $L^{N^*}_M(t,\omega),$ defined above, we have:
\begin{enumerate}
\item[(1)] \(\sum_{M=1}^\infty \mathbb{E_Q}[L_M^N(t,\omega)-L_M^D(t,\omega)]^2<\infty,\)
    \item[(2)] \(\mathbb{E_Q}[L_M^{N^*}(t,\omega) -L_M^{D^*}(t,\omega)] =o(1),\) as $M\to\infty,$
        \item[(3)] \(\mathbb{E_Q}[L_M^{D}(t,\omega) -L_M^{D^*}(t,\omega)] =o(1),\) as $M\to\infty,$
            \item[(4)] \(\mathbb{E_Q}[L^{N^*}_M(t,\omega)]\) is convergent to a finite limit when $M\to\infty,$
                \item[(5)] consequently, $\mathbb{E_Q}[L^D_M(t,\omega)],$ $\mathbb{E_Q}[L^N_M(t,\omega)],$ $\mathbb{E_Q}[L^{D^*}_M(t,\omega)]$ are convergent as well.
\end{enumerate}
\end{proposition}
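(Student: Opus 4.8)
The plan is to rerun the whole argument of Subsections 3.1--3.2 with the Feynman--Kac functional $e_V(t)$ replaced by the survival indicator $\1_{\{t<T^{\bullet}_{\cO(\omega)}\}}$ (for $X$ or $X^M$) and with the Sznitman periodization $V_M^{*}$ of the potential replaced by the periodizations $\cO_M(\omega)$ and $\cO_M^{*}(\omega)$ of the obstacle set introduced in \eqref{eq:obs-per-usu}--\eqref{eq:obs-per-szn}. The role of the exponential formula \eqref{eq:underQ} is then taken over by the Poisson void--probability identity: for a fixed c\`adl\`ag path $w$, writing $S^a_{[0,t]}(w):=\bigcup_{0\le s\le t}\overline B(X_s(w),a)$, one has $\mathbb{E}_{\qpr}\,\1_{\{t<T^{X}_{\cO(\omega)}\}}={\rm e}^{-\nu\, m(S^a_{[0,t]}(w))}$, and analogously for $\cO_M^{*}$ with the cloud restricted to $\cG_M$; this is nothing but \eqref{eq:underQ} applied to $f=\infty\cdot\1_{S^a_{[0,t]}(w)}$, extended to $[0,\infty]$-valued $f$ by monotone convergence. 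Three structural facts make the substitution automatic: the survival indicators are valued in $[0,1]$; on $\{t<\tau_{\cG_M}\}$ one has $T^{X}_{\cO(\omega)}=T^{X}_{\cO_M(\omega)}$ because $\cO_M(\omega)=\pi_M^{-1}(\cO(\omega))$ agrees with $\cO(\omega)$ on $\cG_M$; and $\cO_M(\omega)$ is $\pi_M$-saturated, so Lemma \ref{lm:rotation} applies to the events $\{\text{the path avoids }\cO_M(\omega)\}$ exactly as it did to the Feynman--Kac functionals.

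Granting this, (1) is obtained by repeating the proof of Proposition \ref{prop:dir_neum} line by line: writing $L^D_M$ and $L^N_M$ through the bridge representations and Lemma \ref{lm:rotation}, one gets $0\le L^N_M(t,\omega)-L^D_M(t,\omega)\le R_{1,M}(t)+R_{2,M}(t)$ with $R_{1,M},R_{2,M}$ the very same $\omega$-free quantities as in \eqref{eq:r-jeden}--\eqref{eq:r-dwa} (one simply drops a survival indicator, using that it is $\le 1$), and these are terms of convergent series by Lemma \ref{lm:supr_sum} together with Lemma \ref{lm:ergodic}. Running the identical computation with $\cO_M^{*}(\omega)$ in place of $\cO_M(\omega)$ yields (2), the obstacle analogue of Corollary \ref{cor:dir_neum2}.

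For (3) the analogue of Lemma \ref{lm:dir2} is in fact \emph{easier}, because the obstacles have finite range $a$. Splitting the trace integral over $\cG_M$ into $B(0,2^M-2^{M/2})$ and its complement, the complementary part contributes $O\bigl(2^{-\frac{M}{2}(d_f-1)}\bigr)$ by Lemma \ref{lem:kolnierzyk}, and the event $\{t\ge\tau_{B(0,2^M-2^{M/2})}\}$ contributes $o(1)$ by Lemma \ref{lm:supr_sum}; on the remaining event, for every $M$ large enough that $a<2^{M/2}$ the sausage $S^a_{[0,t]}(w)$ lies inside $\cG_M$ and no Poisson point (nor any of its fibre copies) outside $\cG_M$ can reach it, so by the void--probability identity the two conditional $\qpr$-means $\mathbb{E}_{\qpr}\,\1_{\{t<T^{X}_{\cO(\omega)}\}}$ and $\mathbb{E}_{\qpr}\,\1_{\{t<T^{X}_{\cO_M^{*}(\omega)}\}}$ coincide there. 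Hence $\mathbb{E}_{\qpr}(L^D_M-L^{D^{*}}_M)=o(1)$; in particular no condition of type \textbf{(W2)} is required here.

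The substance of the proposition is (4): that $\mathbb{E}_{\qpr}L^{N^{*}}_M(t,\omega)$ is nonincreasing in $M$, which, together with nonnegativity, gives its convergence to a finite limit $L(t)$. This is the obstacle counterpart of Theorem \ref{th:meanlimit} and is proved along the same lines, with two inputs replacing the ones used there. First, the subadditivity $m\bigl(\bigcup_i A_i\bigr)\le\sum_i m(A_i)$ plays the role of the inequality $1-{\rm e}^{-\sum_i a_i}\le\sum_i(1-{\rm e}^{-a_i})$ used in the proof of \eqref{eq:monot}; with it one reduces, for a fixed path, $\mathbb{E}_{\qpr}\,\1_{\{\pi_{M+1}(X_{\cdot})\text{ avoids }\cO_{M+1}^{*}(\omega)\}}$ to ${\rm e}^{-\nu m(\cdot)}$ of a fibered $a$-sausage built from $\pi_{M+1}(X_s)$ and integrated over $\cG_M$. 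Second, and this is where the real work lies, one must check that condition \textbf{(W3)}, read for the improper profile $W(x,y)=\infty\cdot\1_{B(x,a)}(y)$ — equivalently, the geometric monotonicity of the folding maps
$$\dist\bigl(\pi_{M+1}(x),\pi_M^{-1}(\pi_M(y))\bigr)\ \le\ \dist\bigl(\pi_M(x),\pi_M^{-1}(\pi_M(y))\bigr),\qquad x,y\in\cG,\ M\ \text{large}$$
— holds; this is a statement about the $\pi_M$'s and is to be verified directly from the labelling of $\cV_0$. Given these two inputs, the chain of inequalities in the proof of Theorem \ref{th:meanlimit}, followed by Lemma \ref{lm:rotation}(a)--(b) and the change of variable $x\mapsto\pi_{M,i}^{-1}(x)$ over $\cG_{M+1}=\bigcup_{i=1}^{3}\cG_M^{(i)}$, delivers $\mathbb{E}_{\qpr}L^{N^{*}}_{M+1}(t,\omega)\le\mathbb{E}_{\qpr}L^{N^{*}}_{M}(t,\omega)$ verbatim. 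The main obstacle I expect is precisely the verification of the displayed geometric inequality; everything else is a routine transcription of Section 3. Finally, (5) is immediate: by (1)--(3) the four sequences $\mathbb{E}_{\qpr}L^D_M$, $\mathbb{E}_{\qpr}L^{D^{*}}_M$, $\mathbb{E}_{\qpr}L^N_M$, $\mathbb{E}_{\qpr}L^{N^{*}}_M$ have the same accumulation points, and by (4) one of them converges, hence all four converge to the common finite limit $L(t)$.
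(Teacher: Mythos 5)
Your proposal is correct and, for parts (1), (2), (3) and (5), it follows the paper's proof essentially verbatim: the paper also bounds $0\le L^N_M-L^D_M\le R_{1,M}+R_{2,M}$ with the same $\omega$-free quantities, repeats this for $\cO_M^*$ to get (2), and handles (3) by the same ``finite range'' observation you make, only with the shrunk set $\cG_M^{-a}=\{x\in\cG_M:d(x,\cG_M^c)>a\}$ in place of your $B(0,2^M-2^{M/2})$; you are also right that no analogue of \textbf{(W2)} is needed there. The one genuine divergence is in (4). You transcribe the proof of Theorem \ref{th:meanlimit}, replacing $1-{\rm e}^{-\sum_i a_i}\le\sum_i(1-{\rm e}^{-a_i})$ by subadditivity of $m$ and \textbf{(W3)} by a distance monotonicity of the folding maps. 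The paper instead works directly with the void-probability identity $\qpr[T^{X^M}_{\cO_M^*(\omega)}>t]={\rm e}^{-\nu m(X^{M,a}_{[0,t]}(w))}$, where $X^{M,a}_{[0,t]}(w)$ is the $a$-sausage of the reflected trajectory, and reduces the monotonicity in $M$ to the single scalar inequality $m(\pi_{M+1}(A))\ge m(\pi_M(A))$ for measurable $A\subset\cG$, which avoids the fiber-by-fiber bookkeeping entirely. Your route is nevertheless viable: the geometric inequality you flag as the main open point is exactly \textbf{(W3)} for the improper profile $\infty\cdot\1_{B(x,a)}(y)$, and in the range actually needed ($\dist\le a$ with $a\le 2^M$) it is supplied by the observation in Example \ref{ex:exW1}'s successor (the second example of Section \ref{sec:ex}) that every $z'\in\pi_M^{-1}(\pi_M(y))\cap B(\pi_M(x),R)$ has a partner $y'$ in the same fiber with $d(\pi_{M+1}(x),y')=d(\pi_M(x),z')$; note the unconditional version of your displayed inequality, for distances comparable to $2^M$, is not what you should aim for and is not required. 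So the two proofs differ only in which geometric property of the $\pi_M$'s carries the monotonicity --- volume of projected sausages versus distances to fibers --- and each leaves that property to a short separate verification.
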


\begin{proof} In all that follows we assume that $M$ is large enough to have $2^M>a,$ where $a>0$ is the radius of obstacles. We first show (1). We have
\begin{align}\nonumber
L^D_M(t,\omega)  = & \frac{1}{m(\cG_M)}\int_{\cG_M} p(t,x,x)\mathbf P_{x,x}^t[T^X_{\mathcal O_M(\omega)}>t]\,{\rm d}m(x)\\ \nonumber
&-\frac{1}{m(\cG_M)}\int_{\cG_M} p(t,x,x)\mathbf P_{x,x}^t[T^X_{\mathcal O_M(\omega)}>t, \tau_{\cG_M}\leq t]\,{\rm d}m(x)
\end{align}
and
\[
L_M^N(t,\omega)= \frac{1}{m(\cG_M)}\int_{\cG_M} \sum_{x'\in\pi_M^{-1}(x)}p(t,x,x') \mathbf P^t_{x,x'}[T^X_{\mathcal O_M(\omega)}>t]\,{\rm d}m(x),
\]
therefore
\[0\leq L_M^N(t,\omega)-L_M^D(t,\omega)\leq R_{1,M}(t)+R_{2,M}(t),\]
where $R_{1,M}(t)$ and $R_{2,M}(t)$ are given by (\ref{eq:r-jeden}) and (\ref{eq:r-dwa}).  Proof of (1) follows then as the proof of Proposition
\ref{prop:dir_neum}.

To get (2), we just repeat the steps leading to (1), with obstacle
set being $\mathcal O_M^*.$

Proof of (3) requires somewhat more work.
 If  we denote $\cG_M^{-a}=\{x\in\cG_M: d(x, \cG_M^c)>a\}, $ then
since $p(t,x,x)\leq c(t)$ and
 $\frac{m(\cG_M\setminus \cG_M^{-a})}{m(\cG_M)} =o(1)$
(by an argument similar to that following (\ref{eq:zn})), we have
\[\mathbb{E_Q}L_M^D(t,\omega)=\frac{1}{m(\cG_M)}\int_{\cG_M^{-a}} p(t,x,x) \mathbb{E_Q}\mathbf E_{x,x}^t[T^X_{\mathcal O_M}>t, \tau_{\cG_M}>t] {\rm d}m(x) + o(1),\]and
\[\mathbb{E_Q}L_M^{D^*}(t,\omega)=\frac{1}{m(\cG_M)}\int_{\cG_M^{-a}} p(t,x,x) \mathbb{E_Q} \mathbf E_{x,x}^t[T^X_{\mathcal O_M^*}>t, \tau_{\cG_M}>t] {\rm d}m(x) + o(1).\]
Next, observe that
\begin{align*}
\mathbb{E_Q}\mathbf{E}_{x,x}^t[T^X_{\mathcal O_M^*}>t,\tau_{\cG_M}>t]= & \mathbb{E_Q} \mathbf{E}_{x,x}^t[T^X_{\mathcal O_M}>t,\tau_{\cG_M^{-a}}>t]\\
&+ \mathbb{E_Q} \mathbf{E}_{x,x}^t[T^X_{\mathcal O_M^*}>t,\tau_{\cG_M}>t, \tau_{\cG_M^{-a}}\leq t].
\end{align*}
It follows
\begin{align*}
\mathbb{E_Q}[L_M^D(t,\omega)-L_M^{D^*}(t,\omega)]=& \frac{1}{m(\cG_M)}
\int_{\cG_M^{-a}} p(t,x,x) \mathbb{E_Q}\mathbf E_{x,x}^t\left[ T^X_{\mathcal O_M}>t, \tau_{\cG_M}>t, \tau_{\cG_M^{-a}}\leq t\right]\,{\rm d}m(x)\\
&- \frac{1}{m(\cG_M)}
\int_{\cG_M^{-a}} p(t,x,x) \mathbb{E_Q}\mathbf E_{x,x}^t\left[ T^X_{\mathcal O_M^*}>t, \tau_{\cG_M}>t, \tau_{\cG_M^{-a}}\leq t\right]\,{\rm d}m(x)\\ &+ o(1).
\end{align*}
Both members above are then estimated by $\frac{1}{m(\cG_M)}\int_{\cG_M^{-a}}
p(t,x,x)\mathbf P_{x,x}^t[\tau_{\cG_M^{-a}}\leq t]\,{\rm d}m(x),$ which goes to zero as $M\to \infty$ (consider separately the integrals over the sets
$\cG_M^{-2a}$ and $\cG_M^{-a}\setminus \cG_M^{-2a},$ then proceed as in the proof of Proposition \ref{prop:dir_neum}).

\smallskip

(4) and (5). Clearly, it is enough to show (4). Again, we prove that
the sequence $\mathbb{E_Q}[L^{N^*}_M(t,\omega)]$ is monotone decreasing.
The key observation, replacing  formula (\ref{eq:monot}), is now
\begin{align}\label{eq:monot-poiss}
 \qpr[ T^{X^{M+1}}_{\cO_{M+1}^*(\omega)}>t]\leq \qpr[T^{X^M}_{\cO_M^*(\omega)}>t],\quad M \in \Z.
 \end{align}
 Indeed, for fixed $M\in\Z$ and a given trajectory of $X^M$ we have
 \[\mathbb Q[T^{X^M(w)}_{\mathcal O_M^*(\omega)}>t] = {\rm e}^{-\nu m(X_{[0,t]}^{M,a}(w))},\] where $X^{M,a}_{[0,t]}(w) := \bigcup_{0 \leq s \leq t} \left(X^M_s(w) + B(0,a)\right)$.
This comes as a consequence of the definition of the obstacle set
$\mathcal O_M^*(\omega):$ the trajectory of $X^M$ does not come to the
obstacle set if and only if no obstacles fall into the $a-$vicinity of the trajectory up to time $t.$
To finish the proof of (\ref{eq:monot-poiss}), use the volume monotonicity:
$m(X^{M+1,a}_{[0,t]}(w)) \geq m(X^{M,a}_{[0,t]}(w)),$ which is a consequence of a general fact
\[m(\pi_{M+1}(A))\geq m(\pi_M(A)),\]
valid for an arbitrary measurable set $A\subset\cG.$

Using (\ref{eq:monot-poiss}), we conclude the proof identically as that of Theorem \ref{th:meanlimit}.
\end{proof}
The other ingredient in the proof of the existence of IDS is the variances lemma (Lem. \ref{lem:variances-fractal}). Its proof in the obstacle case is identical as before, and in fact shorter -- as the range of interaction with obstacles is finite, there is no need to introduce
quantities $\widetilde F_M.$

Therefore we obtain:

\begin{theorem}\label{th:IDS-obstacles} Let $a> 0$ be given.
The random measures $l_M^D(\omega)$
and $l_M^N(\omega)$ given by (\ref{eq:el-d}), (\ref{eq:el-n})  in the case of killing obstacles with radius $a,$  are $\qpr-$almost surely vaguely convergent to a common nonrandom limit measure $l$ on $\R_+$.
\end{theorem}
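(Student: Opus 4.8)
\textbf{Proof plan for Theorem \ref{th:IDS-obstacles}.}
The plan is to mimic the proof of Theorem \ref{th:main-fractal} step by step, replacing the Feynman--Kac functional $e_V(t)$ by the survival event $\{T^{\bullet}_{\cO(\omega)}>t\}$ for the relevant process. Concretely, I would combine Proposition \ref{prop:Pois-mean} (the obstacle analogue of Proposition \ref{prop:dir_neum}, Corollary \ref{cor:dir_neum2}, Lemma \ref{lm:dir2} and Theorem \ref{th:meanlimit}) with the obstacle version of the variances lemma (Lemma \ref{lem:variances-fractal}) and then run the same Borel--Cantelli argument as in the proof of Theorem \ref{th:main-fractal}. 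By Proposition \ref{prop:Pois-mean}(5), for each fixed $t>0$ the averages $\mathbb{E_Q}[L^D_M(t,\omega)]$ and $\mathbb{E_Q}[L^N_M(t,\omega)]$ converge to a common finite limit $L(t)$; by the obstacle variances lemma, $\sum_{M\ge1}\mathbb{E_Q}[L^D_M(t,\omega)-\mathbb{E_Q}L^D_M(t,\omega)]^2<\infty$ (and likewise for the Neumann version), so Chebyshev plus Borel--Cantelli gives $L^D_M(t,\omega)\to L(t)$ and $L^N_M(t,\omega)\to L(t)$ $\qpr$-a.s.\ for each fixed $t$, hence simultaneously for all rational $t$, hence (by monotonicity of the Laplace transform in $t$) for all $t>0$.

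Once the $\qpr$-a.s.\ pointwise convergence of the Laplace transforms is established, the passage to vague convergence of the measures is verbatim the argument in the proof of Theorem \ref{th:main-fractal}: evaluating at $t=1$ shows the tilted measures $\widetilde l_M^D(\omega)(\mathrm d\lambda)=e^{-\lambda}l_M^D(\omega)(\mathrm d\lambda)$ are finite, any sequence of finite measures on $\R_+$ is vaguely relatively compact \cite[Lemma A9]{bib:SSV}, and a limit measure is uniquely determined by its Laplace transform $L$. Since both $l_M^D(\omega)$ and $l_M^N(\omega)$ have the same limiting Laplace transform $L$, the limit measure $l$ is common and nonrandom, which is the assertion. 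The only genuinely new inputs compared to the potential case are the two facts already recorded in the proof of Proposition \ref{prop:Pois-mean}: the representation $\mathbb Q[T^{X^M(w)}_{\cO_M^*(\omega)}>t]=e^{-\nu m(X^{M,a}_{[0,t]}(w))}$ and the volume-monotonicity $m(\pi_{M+1}(A))\ge m(\pi_M(A))$, which together yield the monotonicity \eqref{eq:monot-poiss} driving the convergence of $\mathbb{E_Q}[L^{N^*}_M(t,\omega)]$.

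I expect the main obstacle to be purely a matter of bookkeeping rather than of new ideas: one must check that each estimate in the proofs of Proposition \ref{prop:dir_neum}, Lemma \ref{lm:dir2} and Lemma \ref{lem:variances-fractal} survives the substitution $e_V(t)\rightsquigarrow\1_{\{T^{\bullet}_{\cO(\omega)}>t\}}$. The bounds on $R_{1,M}(t)$ and $R_{2,M}(t)$ (hence part (1)) are unchanged since those remainder terms never involved $V$. For part (3), the extra work already indicated in the proof of Proposition \ref{prop:Pois-mean} --- splitting $\cG_M^{-a}$ off and controlling $\mathbf P^t_{x,x}[\tau_{\cG_M^{-a}}\le t]$ --- replaces the use of \textbf{(W2)}; here one exploits that the obstacle interaction has finite range $a$, so no infinite-range tail condition is needed. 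In the variances lemma the simplification is real: because the range of interaction is the fixed radius $a$, the split of $\widetilde\Omega$ into $D_0^M,D_1^M,D_2^M$ can be done with $a_M\equiv a$ and there is no need for the auxiliary functionals $\widetilde F_M$ or the tail term \eqref{eq:var-dir-2}; the independence argument on $D_0^M$ and the bridge-symmetry estimates on $D_1^M$, $D_2^M$ go through as before with $c_M=2^{M/2}$. Thus the proof is complete modulo these routine adaptations, all of which are flagged in the statements and proofs preceding Theorem \ref{th:IDS-obstacles}.
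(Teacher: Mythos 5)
Your proposal is correct and follows essentially the same route as the paper: the paper likewise obtains Theorem \ref{th:IDS-obstacles} by combining Proposition \ref{prop:Pois-mean} with the obstacle version of Lemma \ref{lem:variances-fractal} (noting, exactly as you do, that the finite interaction range makes the functionals $\widetilde F_M$ unnecessary) and then repeating the Borel--Cantelli and vague-compactness argument from the proof of Theorem \ref{th:main-fractal}. No substantive differences.
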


\begin{remark} {\rm When the process $X$ is point-recurrent, then $a=0$ is permitted as well.}
\end{remark}

\section{Examples of profile functions $W$} \label{sec:ex}

In this section we give and discuss some examples of profile functions $W$ which satisfy all of our
three regularity conditions \textbf{(W1)}-\textbf{(W3)}.

\begin{example} \label{ex:exW1}\rm{
Fix $M_0 \in \Z$ and let the function $\psi:\cG_{M_0} \to [0,\infty)$ be such that $\psi \in L^1(\cG_{M_0},m)$. Define
$$
W(x,y):=
\left\{
\begin{array}{ll}
\psi(\pi_{M_0}(y)), & \text{when} \ x, y \in \Delta_{M_0}(z_0), \ \text{for some} \ z_0 \in \cG \backslash \mathcal V_{M_0}, \\
0,       & \text{otherwise} .
\end{array}
\right.
$$
We see that for each fixed $x \in \cG$, $W(x,\cdot)$ is a function supported in the ball $B(x,2^{M_0})$ and for each fixed $y \in \cG$, $W(\cdot,y)$ is a simple function taking the value $\psi(\pi_{M_0}(y))$ on the triangle (or two adjacent triangles) of size $2^{M_0}$ containing $y$ and $0$ otherwise. Therefore, both conditions \textbf{(W1)} and \textbf{(W2)} are immediately satisfied. For every $x \in \cG$ we denote
$$A_{M_0}(x):=\left\{y{'} \in \cG: \ \text{there exists} \ z_0 \in \cG \backslash \mathcal V_{M_0} \ \text{such that} \ x, y{'} \in \Delta_{M_0}(z_0)\right\}.$$
{One can observe that for every natural $M > M_0$ and any $x, y \in \cG$ the sets $\pi_M^{-1}(\pi_M(y)) \cap A_{M_0}(\pi_M(x))$ and $\pi_M^{-1}(\pi_M(y)) \cap A_{M_0}(\pi_{M+1}(x))$ have the same number of elements (zero, one, or two). In  this case,
$$\pi_{M_0} \left(\pi_M^{-1}(\pi_M(y)) \cap A_{M_0}(\pi_{M+1}(x))\right) = \pi_{M_0} \left(\pi_M^{-1}(\pi_M(y)) \cap A_{M_0}(\pi_{M}(x))\right).$$
Hence
\begin{align*}
\sum_{y{'} \in \pi_M^{-1}(\pi_M(y))} W(\pi_{M+1}(x),y{'}) & = \sum_{y{'} \in \pi_M^{-1}(\pi_M(y))}\psi(\pi_{M_0}(y{'})) \1_{A_{M_0}(\pi_{M+1}(x))}(y{'})\\
& = \sum_{y{'} \in \pi_M^{-1}(\pi_M(y))} \psi(\pi_{M_0}(y{'})) \1_{A_{M_0}(\pi_M(x))}(y{'}) = \sum_{y{'} \in \pi_M^{-1}(\pi_M(y))} W(\pi_M(x),y{'}),
\end{align*}
which gives \textbf{(W3)}.}
}
\end{example}

\begin{example}{\rm
Let $\varphi:[0,\infty) \to [0,\infty)$ be a function satisfying the following conditions.
\begin{itemize}
\item[(1)] There exists $R>0$ such that $\varphi(x)=0$ for all $x \in (R,\infty)$.
\item[(2)] For every $y \in \cG$ one has {Let} $\varphi(d(\cdot,y))  \in \cK_{\loc}^X$.
\end{itemize}
For such a function $\varphi$ we define
\begin{align}\label{ex:phi}
W(x,y):=\varphi(d(x,y)), \quad x, y \in \cG.
\end{align}
We have $W(x,y)=W(y,x)$ for all $x, y \in \cG$, and (1)--(2) immediately imply both conditions \textbf{(W1)} and \textbf{(W2)}. We will now check \textbf{(W3)}. For every natural $M \geq M_0:=\left\lceil \log_2 R \right\rceil$ and every $x, y \in \cG$ we denote $D_M(x,y):= \pi^{-1}_M(\pi_M(y)) \cap B(x,R)$. {For every $M \geq M_0$ and $x, y \in \cG$,  the set $D_M(\pi_{M}(x),y)$ has no more elements than $D_M(\pi_{M+1}(x),y)$. Moreover, for every $z{'} \in D_M(\pi_M(x),y)$ there is exactly one $y{'} \in D_M(\pi_{M+1}(x),y)$ (different for different choices of $z{'}$) such that $d(\pi_{M+1}(x),y{'})=d(\pi_M(x),z{'})$. This gives that, for every $M \geq M_0$ and arbitrary $x, y \in \cG,$ we have
\begin{align*}
\sum_{y{'} \in \pi_M^{-1}(\pi_M(y))} W(\pi_{M+1}(x),y{'}) & = \sum_{y{'} \in D_M(\pi_{M+1}(x),y)} \varphi(d(\pi_{M+1}(x),y{'})) \\ & \geq \sum_{z{'} \in D_M(\pi_M(x),y)} \varphi(d(\pi_M(x),z{'})) = \sum_{y{'} \in \pi_M^{-1}(\pi_M(y))} W(\pi_M(x),y{'}),
\end{align*}
which completes the justification of \textbf{(W3)} for the profile $W$ given by \eqref{ex:phi}}.

We note that when $\varphi$ is bounded, then the condition (2) is automatically satisfied for any subordinate
Brownian motion with subordinator satisfying our Assumption \ref{ass:ass1}. Singular functions $\varphi$ are also allowed, but the possible type of singularity strongly depends on the subordinator. {For example, let $\phi(\lambda) = \lambda^{\alpha/d_w
}$ with $\alpha \in (0,d_w)$ and $\varphi(s)=s^{-\gamma} \1_{s \leq R}$ with $\gamma>0$ and some $R>0$. If $\alpha \in (d_f,d_w)$, then $\int_0^1 c_0(t) dt < \infty$ and (2) is satisfied whenever $\gamma < d_f$. When $\alpha \in (0,d_f]$, then $\int_0^1 c_0(t) dt = \infty$ and (2) is satisfied if $\gamma < \alpha$. The latter assertion can be established by extending a very general argument in \cite[Lemma 7 and estimates below it]{bib:BoSz} to the case of this specific process on the Sierpi\'nski gasket.}
}
\end{example}

We now give an example of profile $W$ with unbounded support, which naturally extends Example \ref{ex:exW1}.
\begin{example}{\rm
Let $(a_n)_{n \in \Z}$ be a sequence of nonnegative numbers such that
\begin{align} \label{eq:intexp}
\sum_{M=1}^{\infty} \sum_{n=[M/4]+1}^{\infty} 3^n a_n < \infty.
\end{align}
Define
\begin{align*}
W(x,y) :=\left\{
\begin{array}{ll}
a_0 & \text{when} \quad x, y \in \cG \backslash \mathcal V_0, \ y \in \Delta_{0}(x), \\
a_n & \text{when} \quad x, y \in \cG \backslash \mathcal V_0, \ y \in \Delta_{n}(x) \backslash \Delta_{n-1}(x), \ n = 1, 2, 3, ..., \\
0 & \text{otherwise}.
\end{array}
\right.
\end{align*}
It is easy to see that  \textbf{(W1)} is satisfied. By the inequality
$$
\int_{B(x,2^{[M/4]})^c} W(x,y) m({\rm d}y) \leq \int_{\Delta_{[M/4]}^c(x)} W(x,y) m({\rm d}y) = \sum_{n=[M/4]+1}^{\infty} 2 \cdot 3^{n-1} a_n, \quad M \in \Z, \quad x \in \cG \backslash \mathcal V_0,
$$
and \eqref{eq:intexp}, we get that also \textbf{(W2)} holds. It is enough to justify \textbf{(W3)}. Fix $M \in \Z$ and $x, y \in \cG \backslash \mathcal V_0$. Recall that $\cG_{M+1} = \bigcup_{i=1}^3 \cG^{(i)}_M$. Clearly, each of the three sets $\cG^{(i)}_M \cap \pi_M^{-1}(\pi_M(y))$, $i=1,2,3$, has exactly one element. If $\pi_{M+1}(x) \in \cG^{(1)}_M=\cG_M$, then $\pi_{M+1}(x) = \pi_{M}(x)$ and \textbf{(W3)} follows directly. With no loss of generality we may assume that $\pi_{M+1}(x) \in \cG^{(3)}_M$ (the case of $\pi_{M+1}(x) \in \cG^{(2)}_M$ is analogous). We will need the following observations.
\begin{itemize}
\item[(1)] Since $\Delta_{M+1}(\pi_M(x)) = \Delta_{M+1}(\pi_{M+1}(x)) = \cG_{M+1}$, we have
$$
\sum_{y{'} \in \pi_M^{-1}(\pi_M(y))} W(\pi_{M+1}(x),y{'}) \1_{\cG^c_{M+1}}(y{'}) = \sum_{y{'} \in \pi_M^{-1}(\pi_M(y))} W(\pi_{M}(x),y{'}) \1_{\cG^c_{M+1}}(y{'}).
$$
\item[(2)] We have $\Delta_M(\pi_{M}(x)) = \cG^{(1)}_M$, $\Delta_M(\pi_{M+1}(x)) = \cG^{(3)}_M$ and $\pi_M(\pi_{M+1}(x))=\pi_M(x)$. For every $n \in \left\{1,2,...,M\right\}$, the only element of $\pi_M^{-1}(\pi_M(y)) \cap \cG^{(3)}_M$ belongs to $\Delta_n(\pi_{M+1}(x)) \backslash \Delta_{n-1}(\pi_{M+1}(x))$ if and only if the only element of $\pi_M^{-1}(\pi_M(y)) \cap \cG^{(1)}_M$ belongs to $\Delta_n(\pi_{M}(x)) \backslash \Delta_{n-1}(\pi_{M}(x))$. Therefore,
$$
W(\pi_{M+1}(x),y{'}) \1_{\pi_M^{-1}(\pi_M(y)) \cap \cG^{(3)}_{M}}(y{'}) = W(\pi_{M}(x),y{'}) \1_{\pi_M^{-1}(\pi_M(y)) \cap \cG^{(1)}_{M} }(y{'})
$$
and
$$
\sum_{y{'} \in \pi_M^{-1}(\pi_M(y))} W(\pi_{M+1}(x),y{'}) \1_{\cG^{(1)}_{M} \cup \cG^{(2)}_{M}}(y{'}) = \sum_{y{'} \in \pi_M^{-1}(\pi_M(y))} W(\pi_{M}(x),y{'}) \1_{\cG^{(2)}_{M} \cup \cG^{(3)}_{M}}(y{'}).
$$
\end{itemize}
By these observations, we have
\begin{align*}
\sum_{y{'} \in \pi_M^{-1}(\pi_M(y))} &  W(\pi_{M+1}(x),y{'}) = \sum_{y{'} \in \pi_M^{-1}(\pi_M(y))} W(\pi_{M+1}(x),y{'})\left( \1_{\cG_{M+1}}(y{'}) + \1_{\cG^c_{M+1}}(y{'}) \right) \\
& = \sum_{y{'} \in \pi_M^{-1}(\pi_M(y))} W(\pi_{M+1}(x),y{'})   \1_{\cG^{(1)}_{M} \cup \cG^{(2)}_{M}}(y{'}) + \sum_{y{'} \in \pi_M^{-1}(\pi_M(y))} W(\pi_{M+1}(x),y{'}) \1_{\cG^{(3)}_{M}}(y{'}) \\
& \ \ \ \ \ \ \ \ \  + \sum_{y{'} \in \pi_M^{-1}(\pi_M(y))} W(\pi_{M+1}(x),y{'}) \1_{\cG^c_{M+1}}(y{'})  \\
& = \sum_{y{'} \in \pi_M^{-1}(\pi_M(y))} W(\pi_{M}(x),y{'})  \1_{\cG^{(2)}_{M} \cup \cG^{(3)}_{M}}(y{'}) + \sum_{y{'} \in \pi_M^{-1}(\pi_M(y))} W(\pi_{M}(x),y{'}) \1_{\cG^{(1)}_{M}}(y{'}) \\ & \ \ \ \ \ \ \ \ \  + \sum_{y{'} \in \pi_M^{-1}(\pi_M(y))} W(\pi_{M}(x),y{'}) \1_{\cG^c_{M+1}}(y{'})  \\           & = \sum_{y{'} \in \pi_M^{-1}(\pi_M(y))} W(\pi_{M}(x),y{'}),\end{align*}
which gives \textbf{(W3)}.}
\end{example}

\bigskip
 {
\noindent
\textbf{Acknowledgements.} We would like to thank the anonymous referee for his valuable suggestions and comments.
}

\end{document}